\newcommand{\rom}[1]{\uppercase\expandafter{\romannumeral #1\relax}}
\newcounter{Counter}
\theoremstyle{plain}
\newtheorem{theorem}{Theorem}[section]
\newtheorem{maintheorem}{Main Theorem}[Counter]
\newtheorem{corollary}[theorem]{Corollary}
\newtheorem{lemma}[theorem]{Lemma}
\newtheorem{example}[theorem]{Example}
\newtheorem{remark}[theorem]{Remark}
\newtheorem{proposition}[theorem]{Proposition}
\theoremstyle{definition}
\newtheorem{definition}[theorem]{Definition}
\theoremstyle{definition}
\numberwithin{equation}{section}
\definecolor{darkblue2}{RGB}{0, 0, 200}
\definecolor{darkblue}{RGB}{0, 0, 100}
\definecolor{darkestblue}{RGB}{0, 0, 50}
\colorlet{lightblue}{darkblue!50!}
\definecolor{darkred}{RGB}{100, 0, 0}
\colorlet{lightred}{darkred!50!}
\definecolor{darkgreen}{RGB}{0,100,0}
\def\Ric{\operatorname{Ric}}
\def\N{\mathbb{N}}
\def\Z{\mathbb{Z}}
\def\R{\mathbb{R}}
\def\sup{\operatorname{sup}}
\def\inf{\operatorname{inf}}
\def\Tr{\operatorname{Tr}}
\def\sup{\operatorname{sup}}
\def\dist{\operatorname{dist}}
\def\div{\operatorname{div}}
\title{Rigid comparison geometry for Riemannian bands and open incomplete manifolds}
\author{Sven Hirsch}
\address{Department of Mathematics, Duke University, Durham, NC, 27708, USA}
\email{sven.hirsch@duke.edu}
\author{Demetre Kazaras}
\address{Department of Mathematics, Duke University, Durham, NC, 27708, USA}
\email{demetre.kazaras@duke.edu}
\author{Marcus Khuri}
\address{Department of Mathematics, Stony Brook University, Stony Brook, NY, 11794, USA}
\email{khuri@math.sunysb.edu}
\author{Yiyue Zhang}
\address{Department of Mathematics, University of California, Irvine, 92697, USA}
\email{yiyuez4@uci.edu}
\thanks{M. Khuri acknowledges the support of NSF Grant DMS-2104229.}
\begin{document}

\maketitle

\begin{abstract}
Comparison theorems are foundational to our understanding of the geometric features implied by various curvature constraints. This paper considers manifolds with a positive lower bound on either scalar, 2-Ricci, or Ricci curvature, and contains a variety of theorems which provide sharp relationships between this bound and notions of {\em{width}}. Some inequalities leverage geometric quantities such as boundary mean curvature, while others involve topological conditions in the form of linking requirements or homological constraints. In several of these results open and incomplete manifolds are studied, one of which partially addresses a conjecture of Gromov in this setting. The majority of results are accompanied by rigidity statements which isolate various model geometries -- both complete and incomplete -- including a new characterization of round lens spaces, and other models that have not appeared elsewhere. As a byproduct, we additionally give new and quantitative proofs of several classical comparison statements such as Bonnet-Myers' and Frankel's Theorem, as well as a version of Llarull's Theorem and a notable fact concerning asymptotically flat manifolds. The results that we present vary significantly in character, however a common theme is present in that the lead role in each proof is played by \emph{spacetime harmonic functions}, which are solutions to a certain elliptic equation originally designed to study mass in mathematical general relativity. 
\end{abstract}

\tableofcontents

\vspace{-.5in}

\section{Introduction}\label{S:Intro}

In comparison geometry \cite{CheegerEbin} one relates a geometric feature of a general Riemannian manifold to that of some appropriate model geometry, often taken to be a simply connected space form. These statements form a significant part of our geometric-analytic understanding of various curvature conditions, and commonly come with associated {\emph{rigidity statements}} which provide a revealing characterization of the model spaces. Prototypical examples include Toponogov's triangle comparison for manifolds with a lower sectional curvature bound, and the Bonnet-Myers diameter estimate for manifolds possessing uniformly positive Ricci curvature. Comparison and rigidity results leveraging the much weaker condition of a {\emph{positive lower bound on scalar curvature}} are more ephemeral and underlie many research programs in contemporary differential geometry. 

Historically speaking, scalar curvature comparison theorems employ techniques and ideas from the a priori disparate areas of spin geometry, minimal surface theory, and mathematical general relativity. See \cite{Brendle} for a partial survey. In the present work, we strengthen a number of known scalar and Ricci curvature comparison theorems from a novel and uniform perspective. The primary role in our argumentation is played by a certain elliptic PDE known as the {\emph{spacetime harmonic equation}}, the basic facts of which are reviewed in Section \ref{S:Preliminaries}. This equation originally appeared in \cite{HKK}, wherein its solutions were used to study the total mass of initial data sets for Einstein's equations. The tools we develop here can be considered as generalizing Stern's method of circle-valued harmonic maps on $3$-manifolds \cite{BrayStern, Stern}. Moreover, there is a strong parallel to draw between our work and the emergent techniques of Gromov's $\mu$-bubbles \cite{gromov2019four} and Cecchini-Zeidler's Callias operators \cite{CecchiniZeidler}, which generalize minimal surface and Dirac operator methods, respectively. 

Compared to methods based on the analysis of Jacobi fields along geodesics or on $\mu$-bubbles, the techniques developed here always yield quantitative statements involving bulk integrals of geometric quantities. 
Methods based on Dirac operator methods also yield quantitative statements, but have the potential downside of relying on index-theoretic arguments to produce solutions to the relevant elliptic equations. Solutions to the spacetime Laplace equation, on the other hand, are readily obtained by a fixed point procedure with robust applicability.

Though the results presented here appear diverse, nearly all belong to a class of comparison theorems we refer to as {\emph{band-width inequalities}}. These inequalities analyze compact Riemannian manifolds $(M,g)$ whose boundary components are separated into two disjoint and non-empty collections $\partial M=\partial_- M\sqcup \partial_+ M$. The data $(M,\partial_\pm M,g)$ is referred to as a {\emph{Riemannian band}}. A band-width inequality -- or simply band inequality -- is an upper bound for a Riemannian band's {\emph{width}}, or rather the distance $d(\partial_-M,\partial_+M)$, in terms of the mean curvature of $\partial M$ and a positive curvature lower bound of some type. Taking a limit of band inequalities applied to regions exhausting a given closed manifold allows one to prove interesting comparison theorems of a more classical flavor. One advantage of our band-mentality is that it only requires analysis on compact regions within the manifold of interest. We are able to exploit this feature and prove new theorems about {\emph{open and incomplete}} Riemannian manifolds. To our knowledge, Theorems \ref{Meyer boundary cor}, \ref{t:2RicIncomplete}, and \ref{LlarullIncomplete} are among the first comparison theorems for general incomplete manifolds with full rigidity statements.

Below and throughout this work all manifolds are assumed to be connected, oriented, Hausdorff, second-countable, and smooth. In Section \ref{SS:non-oriented} we discuss the alterations one must make to each theorem presented below in the nonorientable case.

\section{Statement of Results}\label{s:statements}

There are a wide range of results presented in this paper, some of which are known facts proven by new quantitative means, some of which are refinements of previous results, and others are novel. For convenience of the reader, we have highlighted four results that are labelled as \emph{Main Theorems} to indicate the relative level of significance. 

\subsection{Manifolds with positive Ricci curvature}


In Section \ref{SS:RicciIncomplete} we conduct an analysis of spacetime harmonic functions on manifolds with a positive lower bound on Ricci curvature. The technical centerpiece of that section, Lemma \ref{L:Meyer integral formula}, is a fundamental integral identity. This identity is used to establish a band-width inequality, originally obtained by Croke-Kleiner \cite[Theorem 3]{CrokeKleiner} via distance function comparison. Below and throughout, $H$ denotes the mean curvature of a manifold's boundary with respect to the outward pointing unit normal vector. 

\begin{theorem}\label{Meyer boundary}
Let $(M^n,\partial_\pm M^n,g)$, $n\geq 3$, be an $n$-dimensional Riemannian band. Consider $H_\pm:=-\min_{\partial_\pm M^n}H$, where $H$ is the mean curvature with respect to the outer normal. If $\Ric\ge (n-1)g$, then the width satisfies
\begin{equation}\label{e:meyerwidth}
   d(\partial_-M^n,\partial_+M^n) \le \arctan\frac{H_-}{n-1}+\arctan\frac{H_+}{n-1}.
\end{equation}
If additionally equality occurs in \eqref{e:meyerwidth}, then $M^n$ splits isometrically as a warped product 
\begin{equation}
(M^n,g)\cong\left(\left[-\arctan\frac{H_-}{n-1},\arctan\frac{H_+}{n-1}\right]\times\Sigma^{n-1},d\theta^2+\cos^2\theta g_\Sigma\right),
\end{equation}
where $(\Sigma^{n-1},g_\Sigma)$ is a closed $(n-1)$-manifold with $\Ric_{\Sigma}\geq (n-2)g_\Sigma$.
\end{theorem}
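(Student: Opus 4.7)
I would use a spacetime harmonic function method adapted to the positive-Ricci setting. On the warped-product model $(I\times\Sigma,\,d\theta^2+\cos^2\theta\,g_\Sigma)$ the height function $u=\theta$ satisfies $|\nabla u|=1$ and $\Delta u=-(n-1)\tan(u)$, so in particular
\[
\Delta u+(n-1)\tan(u)|\nabla u|=0.
\]
My first step is to solve this nonlinear elliptic PDE on $M$ with Dirichlet data $u|_{\partial_\pm M}=\pm\arctan(H_\pm/(n-1))$, via a fixed-point/continuity method in the spirit of \cite{HKK}. An a priori maximum-principle argument, leveraging the boundary hypothesis $H\geq-H_\pm$ on $\partial_\pm M$, should keep $|u|<\pi/2$ so that the coefficient $\tan(u)$ stays bounded.

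The core of the proof is Lemma \ref{L:Meyer integral formula}, which I would derive by computing the Bochner formula for $\tfrac12\Delta|\nabla u|^2$, multiplying by the slice volume density $\cos^{n-1}(u)$, integrating over $M$, and integrating by parts while repeatedly substituting the PDE. After regrouping, the resulting identity should take the schematic form
\[
\int_M\cos^{n-1}(u)\Bigl(|P|^2+\bigl(\Ric-(n-1)g\bigr)(\nabla u,\nabla u)\Bigr)\,dV=\int_{\partial M}\cos^{n-1}(u)\bigl(H-H_{\mathrm{mod}}(u)\bigr)|\partial_\nu u|\,dA,
\]
where $P:=\nabla^2u+\tan(u)|\nabla u|(g-\nu\otimes\nu)$ with $\nu:=\nabla u/|\nabla u|$, and $H_{\mathrm{mod}}(u)$ is the model's level-set mean curvature evaluated at the current value of $u$ (so $H_{\mathrm{mod}}(\pm\arctan(H_\pm/(n-1)))=-H_\pm$). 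The tensor $P$ is the ``spacetime-deformed traceless Hessian'' whose vanishing characterizes the model's warped-product Hessian.

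Under $\Ric\geq(n-1)g$ the LHS is manifestly $\geq 0$ (via a spacetime Kato-type inequality built into the definition of $P$), while the choice of Dirichlet data together with $H\geq-H_\pm$ makes the RHS integrand $(H+H_\pm)|\partial_\nu u|\geq 0$ on $\partial_\pm M$. Both sides must therefore agree. In the equality case of \eqref{e:meyerwidth} all nonnegative pieces must vanish, forcing $P\equiv0$, $\Ric(\nabla u,\nabla u)=(n-1)|\nabla u|^2$, and $H\equiv-H_\pm$ on $\partial_\pm M$. The vanishing of $P$ gives $\nabla^2 u=-\tan(u)|\nabla u|(g-\nu\otimes\nu)$, from which $|\nabla u|$ is constant along level sets and along $\nabla u$; substituting back into Bochner shows $|\nabla u|\equiv1$ globally. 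Consequently $M$ is foliated by the level sets of $u$ at unit speed, delivering $d(\partial_- M,\partial_+ M)=\arctan(H_-/(n-1))+\arctan(H_+/(n-1))$ together with the warped-product isometry, where $\Sigma:=\{u=0\}$; the bound $\Ric_\Sigma\geq(n-2)g_\Sigma$ then follows from the standard warped-product Ricci formulas. The strict inequality in \eqref{e:meyerwidth} is obtained by contradiction: if $d(\partial_-M,\partial_+M)>\arctan(H_-/(n-1))+\arctan(H_+/(n-1))$, the BVP is still solvable but the identity above forces the rigid warped-product structure, which saturates the width bound and contradicts strictness.

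The main obstacle will be designing Lemma \ref{L:Meyer integral formula} correctly: the weight $\cos^{n-1}(u)$ and the deformed Hessian $P$ must be tuned so that the LHS is manifestly non-negative via a spacetime Kato inequality and, simultaneously, the RHS simplifies on each boundary component to a multiple of $(H+H_\pm)|\partial_\nu u|$ with the right sign. A secondary delicate point is the existence theory for the nonlinear BVP, since $\tan(u)$ blows up as $u$ approaches the endpoints of $(-\pi/2,\pi/2)$; barrier constructions keyed to $H\geq-H_\pm$ are needed to confine $u$ strictly inside that interval.
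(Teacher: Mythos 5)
Your proposal has the right general shape (spacetime harmonic function, weighted Bochner identity, squeeze nonnegative terms to zero), but it departs from the paper in a way that I believe creates a genuine logical gap.

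The paper solves the \emph{linear} equation $\Delta u + n f|\nabla u| = 0$ where $f = \tilde f(r)$ is a prescribed Lipschitz function of the \emph{distance} $r = d(\cdot,\partial_-M)$, not of $u$ (see \eqref{f Meyers} and Proposition \ref{p:existence}), and the Dirichlet data are the neutral values $u = 0,1$. The width enters the argument precisely through $f(r)$: because $\tilde f$ is increasing and takes the value $-H_-/(n(n-2))$ at $r = 0$, the condition $n(n-2)f \geq -H$ needed to make the $\partial_+M$ contribution in Lemma \ref{L:Meyer integral formula} nonpositive reduces to $n(n-2)\tilde f(w) \geq H_+$, which holds if and only if $w \geq \arctan\tfrac{H_-}{n-1}+\arctan\tfrac{H_+}{n-1}$. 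The resulting squeeze $0 \geq \textnormal{boundary} \geq \textnormal{bulk} \geq 0$ is what proves both the width bound (by contradiction) and the rigidity.

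By contrast, your nonlinear PDE $\Delta u + (n-1)\tan(u)|\nabla u| = 0$ with Dirichlet data $u = \pm\arctan(H_\pm/(n-1))$ is entirely intrinsic: neither the equation nor the boundary data sees $r$ or the width. As you observe, the a priori maximum-principle bound $|u| < \pi/2$ keeps the coefficient bounded, so the BVP is solvable independently of the width. Your schematic identity would therefore hold for \emph{every} band with $\Ric \geq (n-1)g$. If, as you assert, that identity then forced $P \equiv 0$ and the warped-product conclusion, it would force it for bands of arbitrarily small width too — which is false. This proves too much and signals that the rigidity cannot follow from the identity as you have set it up. The concrete manifestation is the sign structure you state: both your LHS ($\geq 0$ via Ricci and Kato) and your RHS ($\geq 0$ via $H+H_\pm\geq0$) are nonnegative, and an equality of two nonnegative quantities forces nothing. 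The paper's Lemma \ref{L:Meyer integral formula} is an \emph{inequality}, $\textnormal{boundary}\geq\textnormal{bulk}$, with opposite signs on $\partial_\pm M$ in the boundary term, and the choice of $f(r)$ is what renders both pieces nonpositive — and, crucially, that rendering requires the width hypothesis.

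A secondary technical obstruction if you were to port the paper's Lemma to your setup directly: with $f = f(u)$ one has $\nabla f \parallel \nabla u$, so the Cauchy--Schwarz step $\langle\nabla f,\nabla u\rangle \leq |\nabla f||\nabla u|$ is an equality, and $|\nabla f| = |f'(u)||\nabla u|$ rather than the constant $\tilde f'(r)$ that the paper obtains from $|\nabla r| = 1$. The differential inequality \eqref{tilde f ode} then acquires an uncontrolled factor of $|\nabla u|$, and the bulk no longer has a manifestly nonnegative sign. Whether your alternative weight $\cos^{n-1}(u)$ can compensate for this is unclear, but the structural issue of where the width enters remains. The fix is exactly the paper's idea: make $f$ a function of the distance $r$, not of $u$, and feed the width hypothesis into the proof through the inequality constraining $f$ on $\partial_+ M$.
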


The next result concerns open Riemannian manifolds. If the ends of an open $(M^n,g)$ are separated into disjoint and non-empty classes $E_-$ and $E_+$, the {\emph{width $d(E_-,E_+)$ of $(M^n,E_\pm,g)$}} is the distance between $E_-$ and $E_+$, by which we mean the minimal length of paths traveling between $E_-$ and $E_+$. Related definitions and facts are discussed in Appendix \ref{s:openappendix}. By carefully taking limits within the proof of Theorem \ref{Meyer boundary} in the context of certain compact bands exhausting an open manifold, we establish the following.

\begin{maintheorem}\label{Meyer boundary cor}
Let $(M^n,g)$, $n\geq 3$, be an open $n$-dimensional Riemannian manifold with a closed hypersurface $\Sigma^{n-1}$ separating its ends into two disjoint nonempty classes $E_-$ and $E_+$. If $\mathrm{Ric}\geq(n-1)g$, then 
\begin{equation}\label{e:Meyersincomplete}
    d(\Sigma^{n-1},E_-)+d(\Sigma^{n-1},E_+)\leq\pi .
\end{equation} 
If additionally equality is achieved in \eqref{e:Meyersincomplete}, then $M^n$ splits isometrically as a warped product
\begin{equation}
(M^n,g)\cong\left(\left(-\frac{\pi}{2},\frac{\pi}{2}\right)\times\Sigma^{n-1},d\theta^2+\cos^2\theta g_\Sigma\right)
\end{equation}
where $g_\Sigma$ is a metric on $\Sigma^{n-1}$ satisfying $\Ric_\Sigma\geq (n-2)g_\Sigma$.
\end{maintheorem}

The fundamentally new content of Main Theorem \ref{Meyer boundary cor} is that it imposes no hypothesis of completeness or control on the geometry of the manifold's ends. Such results are rare -- rigidity theorems characterizing open and incomplete Riemannian manifolds include Theorems \ref{Meyer boundary cor}, \ref{t:2RicIncomplete}, \ref{LlarullIncomplete}, Zhu's Theorem \cite[Theorem 1.4]{Zhu}, and Gromov's \cite[Section 3.9]{gromov2019four}. In a similar spirit, Lee-Lesourd-Unger \cite{LLU} and Lesourd-Unger-Yau \cite{LUY} use the method of $\mu$-bubbles to study the the Riemannian positive mass theorem on incomplete asymptotically flat manifolds. We also mention the work of Cecchini-R{\"a}de-Zeidler \cite{CecchiniRadeZeidler} making use of both Dirac and $\mu$-bubble methods to study the scalar curvature of open and complete manifolds with at least two ends.

As an application, in Corollary \ref{c:myerscheng} we apply Main Theorem \ref{Meyer boundary cor} to open manifolds obtained by removing a pair of points from a given closed Riemannian manifold. This leads to new proofs of the classical Bonnet-Myers diameter estimate \cite{Myers} and Cheng's rigidity theorem \cite{Cheng}.

In Section \ref{SS:RicciAF} we give a new and relatively simple argument showing the nonexistance of nonflat asymptotically flat manifolds with nonnegative Ricci curvature.
This fact was previously shown by Zhang \cite[Theorem 3.4]{Zhang} under even weaker decay assumptions on $g$. 
Moreover, Anderson \cite[Theorem 3.5]{Anderson} established a similar result as a consequence of Bishop-Gromov's volume comparison, see also \cite{BKN} and \cite{Bartnik2} for related results. In the special case that $\Ric=0$, a version of this result is due to Schoen \cite[Proposition 2]{Schoen}, who employed harmonic coordinates and Bochner's formula.  

\begin{theorem}\label{t:AFricci}
Let $(M^n,g)$, $n\geq 2$ be a complete Riemannian manifold which is asymptotically flat\footnote{The definition of asymptotically flat manifolds is given in Section \ref{SS:RicciAF}.} of order $q>0$. If $\mathrm{Ric}\geq 0$, then $(M^n,g)$ is isometric to Euclidean space $(\R^n,\delta)$.
\end{theorem}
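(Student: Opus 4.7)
The plan is to apply Bochner's identity to a harmonic function on $M$ that is asymptotic to a linear coordinate at infinity. In the Riemannian setting, such a harmonic function is precisely a spacetime harmonic function with vanishing extrinsic data ($k\equiv 0$), so the governing equation reduces to the ordinary Laplace equation. First, I would use Fredholm theory on weighted Sobolev spaces over asymptotically flat manifolds to produce, for a chosen asymptotic coordinate direction $e_1$, a harmonic function $u\colon M\to\R$ with $u(x)=e_1\cdot x+o(1)$ as $|x|\to\infty$ in the fixed chart at infinity.

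Next, I would apply Bochner's identity for the harmonic $u$,
\begin{equation*}
\tfrac{1}{2}\Delta|\nabla u|^2=|\nabla^2 u|^2+\Ric(\nabla u,\nabla u)\geq 0,
\end{equation*}
so that $|\nabla u|^2$ is subharmonic on $M$. Since $|\nabla u|^2\to 1$ at the unique end, the maximum principle forces $|\nabla u|^2\leq 1$ globally. Integration over a large ball $B_R$ and the divergence theorem then yield
\begin{equation*}
2\int_{B_R}\bigl(|\nabla^2 u|^2+\Ric(\nabla u,\nabla u)\bigr)\,dV=\int_{\partial B_R}\partial_\nu|\nabla u|^2\,dA.
\end{equation*}
The main step is to verify the right-hand side vanishes as $R\to\infty$; granted this, the nonnegative integrand on the left must vanish pointwise, giving $\nabla^2 u\equiv 0$ and $\Ric(\nabla u,\nabla u)\equiv 0$. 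Then $\nabla u$ is a parallel vector field with $|\nabla u|\leq 1$ and asymptotic length $1$, hence $|\nabla u|\equiv 1$. By de Rham's splitting theorem, $(M,g)$ splits isometrically as $\R\times N^{n-1}$, with $N^{n-1}$ complete, asymptotically flat of the same order $q$, and satisfying $\Ric_N\geq 0$. Iterating along the remaining asymptotic directions (or inducting on $n$, with the base case $n=2$ handled separately via a Gauss-Bonnet/Cohn-Vossen argument) would conclude $(M,g)\cong(\R^n,\delta)$.

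The hard part will be verifying that the boundary flux $\int_{\partial B_R}\partial_\nu|\nabla u|^2\,dA$ vanishes in the limit for general $q>0$. A naive estimate based on $|\nabla u|^2-1=O(r^{-q})$ gives only $O(R^{n-2-q})$, which tends to zero only when $q>n-2$. To cover the full range $q>0$, I would extract the improved decay of $v:=u-e_1\cdot x$: since $\Delta_g v=-\Delta_g(e_1\cdot x)=O(r^{-q-1})$ on the asymptotic Euclidean background, an elliptic Liouville-type analysis should yield decay of $v$ strictly faster than $r^{-q}$, which then suffices for the boundary flux to vanish. Alternatively, the monotonicity of $R\mapsto\int_{\partial B_R}\partial_\nu|\nabla u|^2\,dA$ coming from the subharmonicity of $|\nabla u|^2$ offers a second route to controlling the limit.
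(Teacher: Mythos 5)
Your plan stalls exactly where you flag it, and neither of the two fixes you sketch can close the gap. The obstacle is not the remainder $v=u-e_1\cdot x$ but the metric itself: writing $|\nabla u|^2_g=g^{11}+2g^{1j}\partial_j v+g^{ij}\partial_i v\,\partial_j v$, the term $g^{11}-1=O(\rho^{-q})$ already caps the decay of $|\nabla u|^2-1$ at $\rho^{-q}$, no matter how rapidly $v$ decays, so an ``improved Liouville-type decay of $v$'' cannot help. Consequently $\partial_\nu|\nabla u|^2=O(\rho^{-q-1})$ is essentially sharp, and the flux is $O(R^{n-2-q})$, which is only seen to vanish when $q>n-2$. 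Your second fix, monotonicity of $R\mapsto\int_{\partial B_R}\partial_\nu|\nabla u|^2\,dA$, only shows this flux has a nonnegative (possibly infinite) limit; ruling out a strictly positive limit is precisely the content of what needs to be proved, so monotonicity alone gives nothing. Thus for the full range $q>0$ some additional idea is required.

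The paper supplies exactly the missing ingredient: a weight that decays at infinity. Instead of a coordinate harmonic function it takes the Green's function $v\sim\rho^{2-n}$ at a fixed point $p$, sets $u=v^{1/(2-n)}\sim\rho$ (so $u$ is \emph{not} harmonic; it solves $\Delta u=(n-1)|\nabla u|^2u^{-1}$), and applies the Bochner formula to the weighted quantity $(|\nabla u|^2-1)u^{2-n}$ over the annulus between a small geodesic sphere about $p$ and a large coordinate sphere. The weight $u^{2-n}\sim\rho^{2-n}$ exactly offsets the area growth $R^{n-1}$ of the outer sphere, so the outer flux is $O(R^{-q})\to 0$ for any $q>0$, while the normal-coordinate expansion of $u$ near $p$ handles the inner flux. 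In the limit one does not obtain $\nabla^2u=0$ but instead the conformally normalized condition \eqref{jiihoi}, which yields a cone metric $g=du^2+u^2g_\Sigma$, and smoothness at the vertex $p$ then forces $g_\Sigma$ to be the round sphere. Two further remarks: the paper first reduces to a single end via the Cheeger--Gromoll splitting theorem, a reduction your argument also needs since your flux computation presupposes $|\nabla u|^2\to1$ at every end while the linear coordinate lives only in one; and your $n=2$ Gauss--Bonnet treatment matches the paper's.
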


The proof of this result utilizes Green's functions, which may be viewed as a special case of spacetime harmonic functions. In this regard, we mention the recent work of Munteanu-Wang \cite{MW1,MW2} which also employs the use of Green's functions to obtain rigid comparison theorems, but for 3-manifolds with scalar curvature bounded from below.

\subsection{$3$-Manifolds of positive 2-Ricci curvature}

Given a whole number $k$ and a number $\kappa$, a Riemannian manifold is said to have {\emph{$k$-Ricci curvature at least $\kappa$}} if at each point the sum of the $k$ smallest eigenvalues of its Ricci endomorphism is at least $\kappa$. In Section \ref{S:HopfSphere}, we analyse $3$-manifolds with positive $2$-Ricci curvature. In general, the geometry of manifolds satisfying this curvature condition is less constrained compared to positive Ricci curvature ($1$-Ricci curvature), but more constrained compared to positive scalar curvature ($3$-Ricci curvature). 

\begin{remark}
Interestingly, the class of closed $3$-manifolds which support metrics of positive $2$-Ricci curvature coincides with those admitting positive scalar curvature metrics, which in the orientable case is the collection
\begin{equation}\label{e:pscmanifolds}
    \left(\#_{i=1}^kS^3/\Gamma_i\right)\#\left(\#_{j=1}^l S^1\times S^2\right)
\end{equation}
where $S^3/\Gamma_i$ are spherical space forms. To see this, first note that \eqref{e:pscmanifolds} contains all orientable positive scalar curvature $3$-manifolds, a fact which follows from the Poincar{\'e} conjecture \cite{Perelman} and earlier work of Gromov-Lawson \cite{GromovLawsonBig} and Schoen-Yau \cite{SY3}. On the other hand, the round metrics on $S^3/\Gamma_i$ and the product metric on $S^1\times S^2$ have positive $2$-Ricci curvature. Finally, the surgery theorems of Hoelzel \cite{Hoelzel} and Wolfson \cite{Wolfson} apply in this context and show that all of \eqref{e:pscmanifolds} admit such metrics.
\end{remark}

We next describe the appropriate band-width inequality tailored to the positive $2$-Ricci curvature condition. 

\begin{maintheorem}\label{HopfSphereBoundary}
Let $(M^3,\partial_\pm M^3,g)$ be a $3$-dimensional Riemannian band with no spherical classes in $H_2(M^3;\mathbb{Z})$. Consider the sign reversed minimal outward mean curvature $H_0=-\min_{\partial M^3}H$. If $(M^3,g)$ has $2$-Ricci curvature at least $4$, then $H_0>0$ and the width of the band satisfies
\begin{equation}\label{eq:widthinequality2}
w:=d(\partial_-M^3,\partial_+M^3)\leq\mathrm{arctan}(H_0/2).
\end{equation}
If additionally $\Ric\ge2g$ and equality is achieved in \eqref{eq:widthinequality2}, then the universal cover of $(M^3,g)$ is isometric to $([-\frac w2,\frac w2]\times\R^2,g_{\scriptscriptstyle{\Upsilon}})$
where
\begin{equation}\label{e:g'metric}
g_{\scriptscriptstyle{\Upsilon}}=d\rho^2+\phi_{\scriptscriptstyle{\Upsilon}}^2(\rho)dx^2+\psi_{\scriptscriptstyle{\Upsilon}}^2(\rho)dy^2,\qquad \rho \in \left[-\frac w2,\frac w2\right], \text{ }\text{ } (x,y)\in \mathbb{R}^2,
\end{equation}
and
\begin{equation}
\label{phi alpha}
    \begin{cases}
    \phi_{\scriptscriptstyle{\Upsilon}}(\rho)
    =2^{\frac{1-\Upsilon}{2}}\cos^{1-\Upsilon}(\rho+\frac{\pi}{4})\cos^{\frac{\Upsilon}{2}}(2\rho)
    \\ 
    \psi_{\scriptscriptstyle{\Upsilon}}(\rho)
    =2^{\frac{1-\Upsilon}{2}}\sin^{1-\Upsilon}(\rho+\frac{\pi}{4})\cos^{\frac{\Upsilon}{2}}(2\rho)
    \end{cases}
\end{equation}
for some $\Upsilon\in[0,1]$.
\end{maintheorem}

The prototypical example of a band achieving the maximal width of Main Theorem \ref{HopfSphereBoundary} is a neighborhood of the Clifford torus in the round $3$-sphere, which is associated with the case $\Upsilon=0$. It is interesting to point out that even though the curvature condition here is weaker than $\Ric\geq 2g$, inequality \eqref{eq:widthinequality2} is stronger than the width inequality in Theorem \ref{Meyer boundary}. The homological restriction of Main Theorem \ref{HopfSphereBoundary} is responsible for this strengthening. Notice also the more restrictive curvature condition $\Ric\geq2g$ for the rigidity statements of the theorems stated in this subsection. This condition is in fact necessary, as is illustrated by Example \ref{example} below.  Similarly to the previous band inequalities, we establish a version of Main Theorem \ref{HopfSphereBoundary} for open and incomplete manifolds.

\begin{maintheorem}\label{t:2RicIncomplete}
Let $(M^3,g)$ be an open $3$-dimensional Riemannian manifold with a closed surface $\Sigma^2$ separating its ends into two disjoint nonempty classes $E_-$ and $E_+$. Assume that $H_2(M^3;\mathbb{Z})$ contains no spherical classes. If the $2$-Ricci curvature of $(M^3,g)$ is at least $4$, then 
\begin{equation}\label{e:2RicIncomplete}
d(\Sigma^2,E_-)+d(\Sigma^2,E_+)\leq\frac{\pi}{2}.
\end{equation} 
If additionally $\mathrm{Ric}\geq2g$ and equality is achieved in \eqref{e:2RicIncomplete}, then the universal cover of $(M^3,g)$ is isometric to $((-\frac \pi4,\frac \pi4)\times\R^2,g_\Upsilon)$ for some $\Upsilon\in[0,1]$, where $g_\Upsilon$ is as in Main Theorem \ref{HopfSphereBoundary}.
\end{maintheorem}


The above result is of a similar spirit to Zhu's \cite[Theorem 5.1]{Zhu}, which was inspirational to the present work. We point out that Main Theorem \ref{t:2RicIncomplete} fully settles the case of rigidity, which was only partially addressed in the aforementioned paper, thereby resolving an inconsistency in the rigidity statement of \cite[Theorem 5.1]{Zhu}.

Let us take a moment to place the previous two results in some context. Prior to the maturation of the techniques mentioned here, it was unclear to what extent Llarull's Theorem held for {\emph{non-spin}} manifolds. To this end, Gromov \cite{Gromov} adopted the following strategy. First construct wide toric bands embedded as codimension-$0$ submanifolds in the round sphere. Then, given a map from a positive scalar curvature manifold to the round sphere, apply a version of the torus band inequality (see Theorem \ref{ToricBand} below) -- known to hold in dimensions less than $8$, irrespective of the spin condition -- to the preimage of this band and thereby obtain an estimate on the map's Lipschitz constant. In this method, the sharpness of this Lipschitz constant estimate is highly dependent on the first step wherein one seeks a codimension-$1$ embedded torus in the round sphere with a large normal injectivity radius. In dimensions above $3$, there appears to be no canonical choice of this torus and though the construction of embedded high-dimensional tori in \cite{Gromov} is quite clever, the resulting Lipschitz constant estimate is far from sharp. In dimension $3$, however, there is an embedded torus of maximal normal injectivity radius: the minimal Clifford torus in $S^3$ and their lens space quotients. 

As a consequence of Main Theorem \ref{t:2RicIncomplete}, we obtain the following corollary. It is a refinement of the result \cite[Corollary 1.7]{Zhu}, which itself addressed a conjecture of Gromov \cite{Gromov} arising from the above discussion.  Below, given an embedded submanifold $\Sigma^2$ in a Riemannian manifold $(M^3,g)$, its {\em{normal injectivity radius}} will be denoted by $\textit{Inj}_n(\Sigma^2)$. Recall that this quantity represents the largest distance from $\Sigma^2$, such that all points within the radius admit a unique geodesic minimizing distance to the surface. 

\begin{corollary}\label{HopfSphere}
Let $(M^3,g)$ be a closed Riemannian manifold with $2$-Ricci curvature at least $4$. If $\Sigma^2\subset M^3$ is a connected embedded closed surface of positive genus, then 
\begin{align}\label{radius}
    \text{Inj}_n(\Sigma^2)\le \frac\pi4.
\end{align}
If additionally $\Ric\ge2g$ and equality occurs in \eqref{radius}, then the universal cover of $(M^3,g)$ is isometric to the round sphere and $\Sigma^2$ lifts to the Clifford torus. Moreover, in this case $(M^3,g)$ is isometric to a round sphere or a round lens space.
\end{corollary}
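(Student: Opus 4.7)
The plan is to apply Main Theorem~\ref{t:2RicIncomplete} to the maximal embedded tubular neighborhood of $\Sigma^2$. Set $r:=\mathrm{Inj}_n(\Sigma^2)$ and let $V_r:=\{x\in M^3:d(x,\Sigma^2)<r\}$. Since $M^3$ is oriented, $\Sigma^2$ is two-sided and the normal exponential map identifies $V_r$ with the open normal disk bundle of radius $r$; thus $\Sigma^2$ separates $V_r$ into two halves, each limiting to a distinct end of $V_r$, with $d(\Sigma^2,E_\pm)=r$. Because $V_r$ deformation retracts onto $\Sigma^2$ and $\pi_2(\Sigma^2)=0$ (positive genus), $H_2(V_r;\mathbb{Z})$ contains no spherical classes. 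Main Theorem~\ref{t:2RicIncomplete} now yields $2r\le\pi/2$, proving~\eqref{radius}.

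For the rigidity, assume $\Ric\ge 2g$ and $r=\pi/4$. The equality case of Main Theorem~\ref{t:2RicIncomplete} gives an isometry of the universal cover $\widetilde{V_r}$ with $((-\tfrac\pi4,\tfrac\pi4)\times\mathbb{R}^2,g_\Upsilon)$ for some $\Upsilon\in[0,1]$. Evaluating~\eqref{phi alpha} at $\rho=0$ yields $\phi_\Upsilon(0)=\psi_\Upsilon(0)=1$, so the lift of $\Sigma^2$ inherits the flat metric $dx^2+dy^2$ on $\{0\}\times\mathbb{R}^2$. The deck group $\pi_1(V_r)\cong\pi_1(\Sigma^2)$ acts cocompactly by Euclidean isometries on this flat lift; since $\Sigma^2$ is oriented and aspherical, Bieberbach's theorem forces $\pi_1(\Sigma^2)\cong\mathbb{Z}^2$ acting by translations, so $\Sigma^2$ is a flat $2$-torus.

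Since $\Ric\ge 2g$, Bonnet-Myers makes $\widetilde{M^3}$ compact and the Poincar\'e conjecture identifies it topologically with $S^3$. To pin down the isometry type, examine $g_\Upsilon$ as $s:=\tfrac\pi4-|\rho|\to 0$: one computes $\phi_\Upsilon\sim\sqrt{2}\,s^{1-\Upsilon/2}$ and $\psi_\Upsilon\sim\sqrt{2}\,s^{\Upsilon/2}$ near $\rho=\tfrac\pi4$, with the roles exchanged near $\rho=-\tfrac\pi4$. A smooth extension of the warped product through $\rho=\pm\tfrac\pi4$ to a closed Riemannian $3$-manifold requires exactly one warping factor to vanish to first order in $s$ with the other bounded away from zero, which forces $\Upsilon=0$. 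With $\Upsilon=0$, the substitution $\theta=\rho+\tfrac\pi4$, $\phi_1=\sqrt{2}\,x$, $\phi_2=\sqrt{2}\,y$ transforms $g_0$ into the round metric $d\theta^2+\cos^2\theta\,d\phi_1^2+\sin^2\theta\,d\phi_2^2$ on $S^3$ in Hopf coordinates, with $\Sigma^2$ corresponding to the Clifford torus $\{\theta=\tfrac\pi4\}$. Thus $\widetilde{M^3}$ is the round sphere, and since the deck group of $\widetilde{M^3}\to M^3$ acts freely and preserves the Clifford torus, it must be a finite cyclic subgroup of $U(1)\times U(1)\subset\mathrm{Isom}(S^3)$, making $M^3$ either the round sphere or a round lens space.

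The main obstacle will be the final paragraph: transferring the local warped-product information about $\widetilde{V_r}$ to global conclusions about the closed manifold $M^3$, specifically the degeneracy analysis ruling out $\Upsilon>0$ and the verification that the deck group of $\widetilde{M^3}\to M^3$ respects the warped-product isometry rather than merely the underlying topology.
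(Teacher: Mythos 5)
Your argument for the inequality is the same as the paper's. For the rigidity statement you follow a genuinely different route in two places, and it is worth comparing.

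First, to force $\Upsilon=0$: you analyze the asymptotics of the warping factors $\phi_\Upsilon,\psi_\Upsilon$ near $\rho=\pm\pi/4$ and invoke the smoothness requirement (exactly one warping factor vanishing to first order). The paper instead observes that the sectional curvature of $g_\Upsilon$ is unbounded for $\Upsilon>0$, which contradicts $V_r$ lying in the compact smooth $(M^3,g)$. These are two faces of the same phenomenon; the curvature version is a bit cleaner because it does not require knowing anything about the closure of $V_r$, whereas the warping-factor version implicitly assumes that the warped product is actually being compactified inside $M^3$. Second, and more substantively, to conclude that $M^3$ is a round sphere or round lens space: the paper shows that the lift of $\Sigma^2$ is a flat embedded minimal torus in $S^3$, appeals to Brendle's resolution of the Lawson conjecture to identify it as the Clifford torus, and deduces that $\Sigma^2$ is a genus-one Heegaard surface, then invokes the classification of 3-manifolds with genus-one Heegaard splittings. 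You instead identify the lift of $\Sigma^2$ with the Clifford torus directly from the explicit coordinate change, and then argue that the deck group $\pi_1(M^3)$ acts freely on $S^3$ by isometries preserving the Clifford torus and each of its sides, hence lies in $U(1)\times U(1)$, and a finite subgroup of $U(1)\times U(1)$ acting freely on $S^3$ is cyclic. Your route avoids Brendle's theorem entirely, which is a nice simplification, though you should explicitly verify that the preimage of $\Sigma^2$ in $\widetilde{M^3}$ is connected (so the deck group preserves a single Clifford torus rather than permuting several); this follows because $\pi_1(\Sigma^2)\to\pi_1(M^3)$ is surjective, by Van Kampen after filling in the two solid-torus ends.

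The one genuine soft spot is the transition to ``Thus $\widetilde{M^3}$ is the round sphere.'' Your Bonnet-Myers plus Poincar\'e step only establishes the topology of $\widetilde{M^3}$; it does not by itself show that the round metric on $\widetilde{V_r}$ extends to all of $\widetilde{M^3}$. What is needed, and what the paper supplies in one line, is that the closure of $V_r$ in $M^3$ is all of $M^3$: since with $\Upsilon=0$ the warping factor $\phi_0$ collapses as $\rho\to\pm\pi/4$, the level tori shrink to circles and $\overline{V_r}$ is obtained from $V_r$ by adjoining two circles, hence is a closed $3$-manifold with no boundary, hence open and closed in $M^3$, hence all of $M^3$. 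Once this is in place, the Bonnet-Myers and Poincar\'e steps are superfluous and the rest of your argument goes through. You flagged this as the main obstacle, and indeed it is; it is also the one place where the paper's proof contains something your proposal does not.
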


As a second application of Main Theorem \ref{t:2RicIncomplete}, one can give a rigid upper bound for the distance between knots. Recall that a rational homology $3$-sphere $M^3$ is a closed oriented $3$-manifold with vanishing first and second Betti numbers. If $K_1$ and $K_2$ are embedded circles in such a manifold, there is a $2$-cycle $C_i$ such that $\partial C_i$ is a multiple of the torsion class $[K_i]\in H_1(M^3;\mathbb{Z})$ for $i=1,2$. The knots $K_1$ and $K_2$ are {\emph{linked}} if the signed transverse intersection of $K_1$ and $C_2$ is nonzero.

\begin{corollary}\label{c:linkCor}
Let $(M^3,g)$ be a closed $3$-dimensional Riemannian manifold where $M^3$ is a rational homology sphere. Suppose that $K_1,K_2\subset M^3$ are two linked knots. If the $2$-Ricci curvature of $(M^3,g)$ is at least $4$, then $d(K_1,K_2)\leq\pi/2$. If additionally $\mathrm{Ric}\geq2g$ and $d(K_1,K_2)=\pi/2$, then the universal cover of $(M^3,g)$ is the round $3$-sphere and $K_1\cup K_2$ lifts to the Hopf link. Moreover, in this case $(M^3,g)$ is isometric to a round sphere or a round lens space.
\end{corollary}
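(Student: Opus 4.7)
The strategy is to apply Main Theorem~\ref{t:2RicIncomplete} to the open manifold $N := M^3 \setminus (K_1 \cup K_2)$. For small $\epsilon > 0$, let $U_\epsilon(K_i)$ denote the $\epsilon$-tubular neighborhood of $K_i$ and set $\Sigma_1^\epsilon := \partial U_\epsilon(K_1)$, a torus in $N$ separating the ends of $N$ into the nonempty classes $E_-$ (approaching $K_1$) and $E_+$ (approaching $K_2$). A direct computation gives $d(\Sigma_1^\epsilon, E_-) = \epsilon$ and $d(\Sigma_1^\epsilon, E_+) = d(K_1,K_2)-\epsilon$, whose sum is exactly $d(K_1,K_2)$. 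To check that $H_2(N;\Z)$ contains no spherical classes, a Mayer--Vietoris computation using the rational homology sphere assumption yields $H_2(N;\Z)\cong\Z$ with $[\partial U_\epsilon(K_1)] + [\partial U_\epsilon(K_2)]=0$, and the linking hypothesis then excludes sphericity: after reducing to the prime summand of $M^3$ containing both knots (any connect-sum sphere separating them would force zero linking number), an embedded $2$-sphere $S\subset N$ bounds a $3$-ball $B\subset M^3$, and because $B$ cannot contain exactly one of the knots one has either $B\cap(K_1\cup K_2) = \emptyset$ with $[S]=0$, or $B\supset K_1\cup K_2$ with $[S] = [\partial U_\epsilon(K_1)] + [\partial U_\epsilon(K_2)] = 0$. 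Applying Main Theorem~\ref{t:2RicIncomplete} immediately yields $d(K_1,K_2)\le \pi/2$.

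For the rigidity statement, assume $\Ric\ge 2g$ and $d(K_1,K_2) = \pi/2$. Equality is then achieved in \eqref{e:2RicIncomplete} for $\Sigma_1^\epsilon$, so the rigidity half of Main Theorem~\ref{t:2RicIncomplete} provides an isometry of the universal cover of $N$ with $\bigl((-\tfrac{\pi}{4},\tfrac{\pi}{4})\times \R^2, g_\Upsilon\bigr)$ for some $\Upsilon \in [0,1]$. I would then pin down $\Upsilon = 0$ via the end topology: each $K_i$ is a codimension-two circle, so the ends of $N$ are diffeomorphic to $S^1\times(D^2\setminus\{0\})$ with precisely one of the two circle directions collapsing as one approaches $K_i$. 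Inspection of \eqref{phi alpha} shows that whenever $\Upsilon > 0$ both $\phi_\Upsilon$ and $\psi_\Upsilon$ vanish at each endpoint $\rho = \pm\tfrac{\pi}{4}$, producing point-like ends inconsistent with removing circles. With $\Upsilon = 0$, the metric $g_0$ is the round $S^3$ metric written in Hopf coordinates under the substitution $\theta = \rho+\pi/4$, restricted to the complement of two Hopf fibers.

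Finally, the deck group $\pi_1(N)$ acts by isometries on the universal cover preserving the toric fibration, and this action extends through the metric completion to a free, properly discontinuous isometric action on the round $S^3$. The quotient $M^3$ is then a spherical space form whose fundamental group acts on $S^3$ preserving the Hopf link; a short classification of free finite subgroups of the Hopf link stabilizer in $O(4)$ shows that such actions are cyclic (any nontrivial non-diagonal $T^2$-element fixes one of the two Hopf fibers, and the swap involution fixes a circle), yielding precisely the round $S^3$ or a round lens space $L(p,q)$, with the preimage of $K_1\cup K_2$ being the Hopf link in either case. The principal obstacle I anticipate is this final identification step -- extending the deck action compatibly through the singular boundary of the metric completion and classifying the resulting smooth quotients -- together with the careful verification of the no-spherical-classes hypothesis in reducible cases.
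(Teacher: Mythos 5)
Your overall strategy matches the paper's: reduce to Main Theorem~\ref{t:2RicIncomplete} applied to $N=M^3\setminus(K_1\cup K_2)$ with $\Sigma^2$ the boundary of a small tubular neighborhood of $K_1$, verify the no-spherical-classes hypothesis using the rational homology sphere condition and the linking hypothesis, and then argue as in Corollary~\ref{HopfSphere} for the rigidity. However, there are some issues and some genuine differences worth flagging.

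For the no-spherical-classes step, your route through the prime decomposition is more circuitous than the paper's and has a gap. You assert that ``$B$ cannot contain exactly one of the knots,'' but this is exactly the content that needs to be proven via linking, and you haven't supplied the argument; moreover the prior reduction to the prime summand containing both knots is not itself justified (why does the sphere $S$ land in a single prime summand?). The paper's version bypasses the ball reduction entirely: in a rational homology sphere, any embedded sphere in $N$ separates $M^3$ into two regions $U$ and $M^3\setminus U$; nontriviality in $H_2(N)$ forces exactly one knot to lie in each piece; a Mayer--Vietoris argument shows both pieces are rational homology $3$-balls; and then each knot bounds (a multiple of) a $2$-chain contained on its own side, giving zero linking. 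This is tighter and closes your gap.

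For pinning down $\Upsilon=0$, your end-topology argument (both warping functions $\phi_\Upsilon,\psi_\Upsilon$ vanish at $\rho=\pm\pi/4$ when $\Upsilon>0$, so both torus directions collapse, inconsistent with removing circles) is a valid alternative. The paper instead observes that the curvature of $g_\Upsilon$ is unbounded for $\Upsilon>0$, which cannot happen inside a smooth closed manifold. Both arguments work.

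For the final classification, your stated reasons contain an error. The claim ``any nontrivial non-diagonal $T^2$-element fixes one of the two Hopf fibers'' is false: an element $(e^{i\alpha},e^{i\beta})\in T^2\subset SO(4)$ with both $\alpha,\beta\notin 2\pi\mathbb{Z}$ has no fixed point on $S^3$ at all. Likewise, a \emph{twisted} swap $(z_1,z_2)\mapsto(e^{i\alpha}z_2,e^{i\beta}z_1)$ with $\alpha+\beta\notin 2\pi\mathbb{Z}$ acts freely; the reason swaps are excluded here is not fixed points but rather that a deck transformation swapping the two Hopf circles would force $K_1=K_2$ in the quotient, contradicting the hypothesis that $K_1$ and $K_2$ are distinct knots. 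With that corrected, the deck group lies in the maximal torus $T^2$, and any finite subgroup of $T^2$ acting freely on $S^3$ is cyclic (an element $(e^{i\alpha},1)$ with $\alpha\neq 0$ fixes the circle $\{z_1=0\}$, so both coordinate projections $G\to U(1)$ must be injective). The paper instead follows the proof of Corollary~\ref{HopfSphere}: the boundary torus of the $K_1$-tubular neighborhood is a Heegaard surface, and lens spaces are the only nontrivial quotients of $S^3$ admitting a genus-$1$ Heegaard splitting. That argument is shorter and sidesteps the equivariant analysis, but your group-theoretic version can be made to work once the above corrections are incorporated.
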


We would like to emphasize that Corollaries \ref{HopfSphere} and \ref{c:linkCor} are, to the authors' knowledge, the only general comparison theorems which specifically characterize the class of round lens spaces. It is worth pointing out the unrelated characterization of the round $\mathbb{RP}^3$ by 
Bray-Brendle-Eichmair-Neves \cite{BBEN} in terms of the area of certain $2$-cycles.

\begin{example}\label{example}
There exists an explicit $1$-parameter family of smooth and nonround metrics on $S^3$ with $2$-Ricci curvature at least $4$, each of which contains an embedded torus of normal injectivity radius equal to $\pi/4$. Furthermore, utilizing the `Hopf link' or a thickening thereof, this family may be employed to obtain counterexamples to the rigidity statements in Main Theorems \ref{HopfSphereBoundary} and \ref{t:2RicIncomplete}, as well as Corollary \ref{c:linkCor}, when the Ricci lower bound is not assumed. See Section \ref{s:counterexample} for details.

\end{example}

\subsection{$3$-Manifolds of positive scalar curvature}
The methods used above to treat positive Ricci and 2-Ricci curvature, may also be applied in the context of scalar curvautre.
In Section \ref{S:Llarull} we study the Lipschitz constants of maps from positive scalar curvature manifolds to the unit round $3$-sphere $(S^3,g_{S^3})$. In particular, we show the following quantitative statement which recovers the illuminating theorem of Llarull \cite{Llarull} in the special case of $3$-manifolds satisfying a topological condition.

\begin{theorem}\label{Llarull}
Let $(M^3,g)$ be a closed 3-dimensional Riemannian manifold with $H_2(M^3;\Z)=0$. Suppose that $\ell:(M^3,g)\to (S^3,g_{S^3})$ is a $C^1$ map of nonzero degree with Lipschitz norm $\mathrm{Lip}(\ell)\le1$. Then there exists a nonconstant spacetime harmonic function $u$ on $M^3$ which is $C^{2,\alpha}$-smooth away from a finite set of points, for any $\alpha\in(0,1)$, such that
\begin{equation}\label{e:Llarullquant}
\int_{M^3}(6-R)|\nabla u|dV\geq\int_{M^3}\frac{{\big{|}}\nabla^2u+\cot(\theta\circ\ell)|\nabla u|\;g{\big{|}}^2}{|\nabla u|}dV
\end{equation}
where $\theta(x)$ denotes the spherical distance between a point $x\in S^3$ and the north pole.  If additionally the scalar curvature satisfies $R\geq6$, then $\ell$ is an isometry.
\end{theorem}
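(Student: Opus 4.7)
The plan is to construct a spacetime harmonic function $u$ on $M^3$ tailored to $\ell$, derive the integral inequality \eqref{e:Llarullquant} from an integrated Bochner identity, and extract rigidity by saturating both sides.

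To construct $u$, I would first perturb $\ell$ slightly -- without changing $\deg\ell$ or $\mathrm{Lip}(\ell)$ by more than an arbitrarily small amount -- so that the two poles $N,S\in S^3$ are regular values.  The preimages $P_\pm := \ell^{-1}(\{N,S\})$ are then nonempty finite sets, and $F := \theta\circ\ell$ is smooth on $M^3\setminus(P_+\cup P_-)$, with $\cot F$ blowing up at these singular points. Following the spacetime harmonic framework reviewed in Section \ref{S:Preliminaries}, I would then solve
\begin{equation*}
    \Delta u + 3\cot(F)\,|\nabla u| = 0
\end{equation*}
on an exhaustion of $M^3$ by the compact bands $\Omega_\delta := F^{-1}([\delta,\pi - \delta])$, imposing divergent Dirichlet data on the two boundary components, and pass to the limit $\delta\to 0$. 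The topological hypothesis $H_2(M^3;\mathbb{Z}) = 0$ enters here to guarantee that the limit $u$ is single-valued and nonconstant (via the nonzero flux imposed by $\deg\ell\neq 0$ through any regular level set of $F$), yielding a $u$ that is $C^{2,\alpha}$ away from $P_+\cup P_-$.

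For the integral inequality, the key observation is that when $\Delta u = -3\cot F\,|\nabla u|$, the tensor $\nabla^2 u + \cot F\,|\nabla u|\,g$ in the integrand of \eqref{e:Llarullquant} is trace-free, so that
\begin{equation*}
    |\nabla^2 u|^2 = \bigl|\nabla^2 u + \cot F\,|\nabla u|\,g\bigr|^2 + 3\cot^2 F\,|\nabla u|^2.
\end{equation*}
I would combine this with the standard Bochner formula for $\tfrac12\Delta|\nabla u|^2$, divide by $|\nabla u|$ in the spirit of the spacetime harmonic integration-by-parts scheme, and integrate over $M^3$. The boundary contributions near the singular set $P_+\cup P_-$ are controlled by cutoff arguments once the blow-up rate of $u$ at these points is determined. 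The Ricci term $\Ric(\nabla u,\nabla u)$ is then converted into a scalar-curvature contribution by exploiting the $1$-Lipschitz property of $\ell$ together with $\Ric_{S^3} = 2g_{S^3}$: the pointwise bound $|\nabla F|\le \mathrm{Lip}(\ell)\le 1$, combined with an eigenvalue-by-eigenvalue estimate paralleling Llarull's argument, delivers exactly the coefficient $(6-R)$ on the left-hand side.

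Under the additional hypothesis $R\ge 6$, the left-hand side of \eqref{e:Llarullquant} is non-positive while the right-hand side is non-negative, forcing both sides to vanish. Equality then gives $R\equiv 6$, the trace-free tensor $\nabla^2 u + \cot F\,|\nabla u|\,g \equiv 0$ wherever $|\nabla u| > 0$, and $|\nabla F|\equiv 1$ in the directions detected by $\nabla u$; the last condition forces $\ell$ to be a local isometry on the support of $\nabla u$, which by unique continuation and $\deg\ell\neq 0$ promotes to a global isometry. I expect the main technical obstacle to be producing a nontrivial $u$ with the correct singular behavior at $P_\pm$ and verifying that the boundary integrals near these points vanish in the Bochner identity; this is precisely where the hypothesis $H_2(M^3;\mathbb{Z}) = 0$ is essential. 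A secondary subtlety is the sharp eigenvalue-level comparison between $\Ric_{M^3}(\nabla u,\nabla u)$ and $R\,|\nabla u|^2$ via the pullback structure of $\ell$, which is the non-spin analogue of the twisted Dirac operator computation in Llarull's original proof.
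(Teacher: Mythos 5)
Your high-level plan (solve a spacetime harmonic equation with $f=\cot(\theta\circ\ell)$, use the trace-free decomposition of the spacetime Hessian, pass to a limit over an exhaustion, extract rigidity by saturation) is broadly aligned with the paper, but there are two genuine gaps in the mechanism you describe for arriving at the inequality \eqref{e:Llarullquant}.

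First, you misidentify the role of $H_2(M^3;\mathbb{Z})=0$. You say it is used to guarantee single-valuedness and nonconstancy of $u$ via flux. In fact $u$ is a scalar function, not a circle-valued map, so single-valuedness is never an issue; nonconstancy is obtained from a normalization/barrier argument (Lemma \ref{L:A convergence}). What $H_2(M^3;\mathbb{Z})=0$ actually controls, via Lemma \ref{topangnha}, is the Euler characteristic of regular level sets: it forces $\chi(\Sigma_t)\le 2$. This bound is indispensable, because the integral identity of Lemma \ref{integralformula} does not involve $\Ric(\nabla u,\nabla u)$ at all -- the Bochner Ricci term has already been traded, via the Gauss equations on level sets, for the ambient scalar curvature $R$ and the Gauss curvature $K$ of $\Sigma_t$, and the latter is integrated out by Gauss--Bonnet to produce $4\pi\int\chi(\Sigma_t)\,dt$. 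Your proposed alternative -- an ``eigenvalue-by-eigenvalue estimate paralleling Llarull's argument'' comparing $\Ric_{M}(\nabla u,\nabla u)$ to $R$ via the pullback structure -- is precisely the kind of fiberwise spinor/Clifford computation that lives in the Dirac-operator world and does not have an analogue in this Stern-type level-set framework. There is no step in the paper, and no evident substitute, where a pointwise Ricci-to-scalar comparison is performed. Instead, the decisive estimate uses the coarea formula together with the observation that $\csc^2\theta\,g_{S^3}$ is a product cylinder metric on $(0,\pi)\times S^2$, so that every homologically nontrivial $\ell(\Sigma_t)$ has area at least $4\pi$ in that metric; combined with $\mathrm{Lip}(\ell)\le 1$ and $\chi(\Sigma_t)\le 2$ this exactly closes the inequality. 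Your proposal omits this area/coarea step entirely, and without it the bound \eqref{e:Llarullquant} cannot be reached.

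Second, letting $f=\cot(\theta\circ\ell)$ blow up at \emph{every} preimage of $N$ and $S$ and exhausting by $\Omega_\delta=F^{-1}([\delta,\pi-\delta])$ does not fit the existence theory. Proposition \ref{p:existence} requires $f\in\mathrm{Lip}(M^n)$ on the band, and the Dirichlet data are finite constants $c_\pm$ (there is no ``divergent Dirichlet data'' option). The paper resolves this by a careful construction: it punctures $M^3$ only at two distinguished preimages $N_0,S_0$ (so the band has exactly two boundary spheres, giving a clean $\partial_\pm$ structure and a controlled mean curvature sign via the blow-up of $f$ there), while near the remaining preimages $N_1,\dots,N_k,S_1,\dots,S_l$ the function $f_{\varepsilon,\delta}$ is regularized through $\psi_\delta$ so that it stays Lipschitz in the interior. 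The resulting error is then shown to be $O(\delta)$ via the gradient estimate Theorem \ref{T:Gradient Estimate}. If instead you puncture at all preimages, the band $\Omega_\delta$ generically has many boundary components with no natural two-class partition, and there is no reason for the boundary terms to have the correct sign; if you keep only two punctures but let $f$ blow up at the other preimages, $f$ is not Lipschitz and the Dirichlet problem has no solution in $C^{2,\alpha}$. So the regularization of $f$ away from the two distinguished poles is not a technicality you can defer -- it is load-bearing.
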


\begin{remark} 
In the statement of Theorem \ref{Llarull} we include the quantitative inequality \eqref{e:Llarullquant}. In fact, as a biproduct of the techniques employed, all results in this manuscript could be stated in such terms, but for brevity we only make this explicit here. It is worth noting that such quantitative statements involving Hessians could have implications for associated stability questions, see for instance \cite{CheegerColding, KKL}.
\end{remark}


Localized or rigid quantitative band-type versions of Llarull's theorem are also possible. See \cite{GS}, \cite{Listing}, and \cite{Lott} for very general results of this variety. In the following, let $N$ and $S$ denote the north and south poles in $S^3$. Given two numbers $r_1,r_2\in(0,\pi)$ such that $r_1<r_2$, we will write $A[r_1,r_2]$ for the closed annular region in the unit round sphere consisting of points $x\in S^3$ satisfying $\theta(x)=d_{S^3}(x,N)\in[r_1,r_2]$.

\begin{theorem}\label{LlarullBand}
Let $(M^3,\partial_\pm M^3,g)$ be a $3$-dimensional Riemannian band such that both $\partial_+M^3$ and $\partial_-M^3$ are connected, and $H_2(M^3,\partial M^3;\mathbb{Z})=0$. Suppose that $0<r_1<r_2<\pi$, and $\ell:M^3\to A[r_1,r_2]$  is a $C^1$ map of nonzero degree with $\mathrm{Lip}(\ell)\leq 1$. If $R\geq6$, then there must exist a point $x\in\partial M$ so that the outward normal mean curvature satisfies $H(x)\leq H_{g_{S^3}}(\ell(x))$. If additionally $H(x)\geq H_{g_{S^3}}(\ell(x))$ for all points $x\in\partial M^3$, then $(M^3,g)$ is isometric to an annular region in the unit round $3$-sphere.
\end{theorem}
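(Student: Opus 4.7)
The plan is to adapt the spacetime-harmonic strategy of Theorem \ref{Llarull} to the band setting by replacing the closed-manifold construction of the auxiliary function with a Dirichlet problem whose boundary data matches the annular model on $S^3$. Since $\mathrm{deg}\,\ell\neq 0$ and both $\partial_\pm M^3$ are connected, $\ell$ must send $\partial M^3$ to $\partial A[r_1,r_2]$, and up to relabeling we may assume $\ell(\partial_\pm M^3)\subset\{\theta=r_\pm\}\subset S^3$, so that $\varphi:=\theta\circ\ell$ is Lipschitz with $r_1\leq\varphi\leq r_2$ and $\varphi|_{\partial_\pm M^3}=r_\pm$. The model on $(A[r_1,r_2],g_{S^3})$ is $u=\cos\theta$, which satisfies the tensor equation $\nabla^2 u+\cot(\theta)|\nabla u|\,g_{S^3}=0$.

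First I would solve the Dirichlet problem
\begin{equation*}
\Delta u + 3\cot(\varphi)\,|\nabla u|=0 \text{ in } M^3, \qquad u|_{\partial_\pm M^3}=\cos r_\pm,
\end{equation*}
whose solvability follows from the fixed-point scheme for spacetime harmonic equations discussed in Section \ref{S:Preliminaries}; here $\cot\varphi$ is bounded because $0<r_1\leq\varphi\leq r_2<\pi$. Combining the Bochner formula for $|\nabla u|$ with this PDE, the Gauss equation on regular level sets of $u$, the Lipschitz bound $|\nabla\varphi|\leq 1$ (used to absorb the drift term $\langle\nabla u,\nabla\cot(\varphi)\rangle$ and produce the constant $6=\mathrm{scal}(g_{S^3})$), and integration by parts, I would derive the band analog of the quantitative inequality \eqref{e:Llarullquant}:
\begin{equation*}
\int_{M^3}\frac{\bigl|\nabla^2 u+\cot(\varphi)|\nabla u|\,g\bigr|^2}{|\nabla u|}\,dV+\int_{M^3}(R-6)|\nabla u|\,dV \;\leq\; \int_{\partial M^3}\bigl(H_{g_{S^3}}(\ell)-H\bigr)|\nabla u|\,dA.
\end{equation*}
The hypothesis $H_2(M^3,\partial M^3;\mathbb{Z})=0$ is used to ensure that generic level sets of $u$ are nullhomologous $2$-spheres, so that the pullback-of-area-form step in the Llarull argument on level sets is topologically unobstructed.

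Given $R\geq 6$, both bulk terms on the left are nonnegative, which forces the right-hand side to be nonnegative and hence produces a point $x\in\partial M^3$ with $H(x)\leq H_{g_{S^3}}(\ell(x))$. For the rigidity statement, the assumption $H\geq H_{g_{S^3}}\circ\ell$ everywhere on $\partial M^3$ makes the right-hand side nonpositive; combined with the nonnegative left-hand side, all terms must vanish. This yields $R\equiv 6$, the tensor equation $\nabla^2 u=-\cot(\varphi)|\nabla u|\,g$, and saturation of $\mathrm{Lip}(\ell)\leq 1$ along each regular level set of $u$. A warped-product reconstruction in the spirit of Theorem \ref{Meyer boundary} and Main Theorem \ref{HopfSphereBoundary} then identifies $(M^3,g)$ with the metric $d\theta^2+\sin^2\theta\,g_{S^2}$ over $(r_1,r_2)$, and the saturated Lipschitz constant upgrades $\ell$ to an isometric inclusion into the corresponding annular region in $(S^3,g_{S^3})$.

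The main obstacle I anticipate is deriving the boundary term in the integral identity with the correct coefficient $H_{g_{S^3}}(\ell)-H$ and the correct sign. Using that $u$ is constant on each component of $\partial M^3$, so $\nabla u$ is parallel to the outward normal $\nu$ there, one computes $\partial_\nu|\nabla u|=-(3\epsilon\cot(\varphi)+H)|\nabla u|$ on $\partial M^3$, where $\epsilon=\mathrm{sgn}\,\langle\nabla u,\nu\rangle$; comparing with the analogous formula on $\partial A[r_1,r_2]$ isolates $H-H_{g_{S^3}}(\ell)$ as the natural boundary deviation from the model. A secondary difficulty is handling the critical set where $|\nabla u|=0$: as in Theorem \ref{Llarull}, one expects $u$ to be $C^{2,\alpha}$ away from a finite set of points, so the integration by parts must be justified by excising small neighborhoods of this set and passing to the limit through a cutoff argument.
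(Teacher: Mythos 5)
Your outline correctly identifies the basic mechanism -- solve the spacetime harmonic Dirichlet problem with $f=\cot(\theta\circ\ell)$, feed the solution into the band integral formula, and read the conclusion off the sign of the boundary term. The boundary-term sign analysis you anticipate (reading off $H_{g_{S^3}}(\ell)-H$ by comparing $\partial_\nu|\nabla u|$ with the model) is on track. However, there is a genuine gap between the ingredients you list and the inequality you claim they produce, and it centers on a misidentification of what the topological hypothesis does.

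First, the claim that regular level sets of $u$ are \emph{nullhomologous $2$-spheres} is wrong, and if it were true it would actually destroy the argument. By the maximum principle and the Dirichlet conditions, each regular $\Sigma_t$ is homologous to $\partial_- M^3$, hence $\ell(\Sigma_t)$ is homologous (with multiplicity $|\deg\ell|\geq 1$) to the cross-section of the annulus. This \emph{non}triviality is exactly what forces the area lower bound $\int_{\ell(\Sigma_t)}2\csc^2\theta\, dA_{S^3}\geq 8\pi$ in the product-cylinder metric $\csc^2\theta\, g_{S^3}=\csc^2\theta\, d\theta^2+g_{S^2}$. What the hypothesis $H_2(M^3,\partial M^3;\mathbb{Z})=0$ actually controls, via Lemma~\ref{topangnha} with $b=2$, is the \emph{number of spherical components} of a regular $\Sigma_t$: it cannot exceed $\operatorname{rank} H_2(M^3,\partial M^3)+b-1=1$, and since the remaining components have nonpositive Euler characteristic, this caps $\chi(\Sigma_t)\leq 2$.

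Second, with the hypothesis correctly interpreted, your derivation is missing the balancing step that actually produces your displayed inequality. Lemma~\ref{integralformula} carries the term $4\pi\int_{-1}^{1}\chi(\Sigma_t)\,dt$ on the favorable side; this is bounded above by $16\pi$ using the $\chi$-bound just described. On the other side, the Lipschitz estimate on $\ell$ does \emph{not} reduce $(R+6f^2)|\nabla u|-4\langle\nabla f,\nabla u\rangle$ to merely $(R-6)|\nabla u|$; it leaves a residual term $2\csc^2(\theta\circ\ell)|\nabla u|$, and you must show by the coarea formula plus the cylinder-area argument above that $\int_{M^3}2\csc^2(\theta\circ\ell)|\nabla u|\,dV\geq 16\pi$. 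The two $16\pi$'s then cancel, which is what leaves behind the clean inequality you wrote. Absent the Euler-characteristic and level-set-area comparison, the inequality does not follow from Bochner, the PDE, and integration by parts.

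A minor point: the concern about a singular/critical set is not an issue in the band setting. Since $0<r_1\leq\theta\circ\ell\leq r_2<\pi$, the function $\cot(\theta\circ\ell)$ is bounded (indeed Lipschitz), so Proposition~\ref{p:existence} gives a global $C^{2,\alpha}$ solution with no points excised; the finite singular set in Theorem~\ref{Llarull} arose only because $\cot\theta$ blows up at the preimages of the poles, which never occurs here. Finally, for rigidity the paper does not pass through an explicit warped-product reconstruction: from $\overline{\nabla}^2u\equiv 0$ one first shows $\nabla u\neq 0$ everywhere, then that $\nabla u$ is parallel to $\nabla(\theta\circ\ell)$ and $\ell|_{\Sigma_t}$ is area-preserving, whence $\ell$ is a local isometry and hence global because $A[r_1,r_2]$ is simply connected. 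Your warped-product route can be made to work, but it is an extra detour.
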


Conjecturally, the extremal character of $(S^3,g_{S^3})$ articulated by Theorem \ref{Llarull} is even more robust -- Gromov has suggested  \cite[Conjecture D]{Gromov} that the open and incomplete manifold formed by removing finitely many points from the round sphere enjoys the same property. We confirm this statement in the next result, for dimension $3$ in the special case of a pair of antipodal points. See Gromov's four lectures on scalar curvature \cite[Section 3.9]{gromov2019four} for an extended discussion and an alternative set of arguments, for this and related extremality statements. See also the recent similar result \cite[Theorem 1.6]{HLS} of Hu-Liu-Shi.

\begin{maintheorem}\label{LlarullIncomplete}
Let $g$ be a Riemannian metric on $S^3\setminus\{N,S\}$. If $g\geq g_{S^3}$, then there is a point $x\in S^3\setminus\{N,S\}$ where the scalar curvature satisfies $R(x)\leq 6$. If additionally $R\geq6$, then $g$ agrees with the round metric $g_{S^3}$.
\end{maintheorem}

Prior to Llarull's work, Gromov-Lawson \cite{GromovLawson} developed a homotopy-theoretic obstruction to the existence of positive scalar curvature metrics on closed spin manifolds called {\emph{enlargability}}. The enlargability obstruction articulates the following heuristic: if scalar curvature is positive, at each point there is at least one positive eigenvalue of the Ricci endomorphism and so the geometric logic of Bonnet-Myers dictates that the manifold cannot expand dramatically in all directions simultaneously. The $n$-torus, for instance, may be viewed as expanding in all directions by passing to covers, and therefore cannot support a positive scalar curvature metric. Soon after this work, Gromov-Lawson \cite[Section 12]{GromovLawsonBig} qualified this obstruction with arguments that lead to a sharp upper bound on the width of torical bands $(T^{n-1}\times[0,1],g)$, $n\leq 7$ of uniformly positive scalar curvature, a result which would be made explicit and expounded upon by Gromov in \cite{Gromov} and dubbed the {\em{Torical $\frac{2\pi}{n}$-Inequality}}. In Section \ref{S:Band}, we establish a generalized form of this band-width inequality. We will refer to a nontrivial homology class ${\bf{c}}\in H_2(M^3)$ as {\emph{spherical}}, if there is an embedding $S^2\hookrightarrow M^3$ so that ${\bf{c}}$ is the image of the fundamental class $[S^2]$.

\begin{theorem}\label{ToricBand}
Let $(M^3,\partial_\pm M^3,g)$ be a $3$-dimensional Riemannian band such that $H_2(M^3;\mathbb{Z})$ contains no spherical classes. Consider the sign reversed minimal outward mean curvature $H_0=-\min_{\partial M^3}H$.
If $R\geq 6$, then $H_0>0$ and the width of the band satisfies
\begin{equation}\label{eq:widthinequality}
w:=d(\partial_-M^3,\partial_+M^3)\leq\frac{4}{3}\arctan(H_0/2).
\end{equation}
If additionally equality is achieved in \eqref{eq:widthinequality}, then $M^3$ splits isometrically as a warped product 
\begin{equation}
(M^3,g)\cong\left([-w/2,w/2]\times T^2,\text{ }\! ds^2 +\cos^{\frac{4}{3}}\!\left(\tfrac{3}{2}s \right)g_0\right),
\end{equation}
where $g_0$ is a flat metric on the torus $T^2$.
\end{theorem}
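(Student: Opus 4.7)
My approach follows the spacetime-harmonic philosophy of the paper, with the PDE chosen so that the model warped product $([-w/2,w/2]\times T^2,\,ds^2+\cos^{4/3}(3s/2)g_0)$ is the unique extremizer. In this model the coordinate $u(s)=s$ satisfies $|\nabla u|\equiv 1$ and $\Delta u=-2\tan(3u/2)$, which motivates solving the Dirichlet problem
\begin{equation}\label{e:toricplan}
\Delta u + 2\tan(3u/2)|\nabla u|=0 \text{ on } M^3,\qquad u|_{\partial_\pm M^3}=\pm w/2,
\end{equation}
where $w=d(\partial_- M^3,\partial_+ M^3)$. Solvability together with an interior bound $|u|<\pi/3$ keeping the coefficient regular should follow from a regularization and fixed-point scheme of the sort developed in \cite{HKK}, applied after first slightly shrinking the band if necessary so that the Dirichlet data lie in the regular range of $\tan(3u/2)$, and then passing to a limit.

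Next, I would apply the Bochner identity, substitute for $\Delta u$ and $\nabla\Delta u$ via \eqref{e:toricplan}, and invoke the three-dimensional Gauss equation $2\Ric(\nabla u,\nabla u)/|\nabla u|^2=R-2K_{\Sigma_t}+H_{\Sigma_t}^2-|A_{\Sigma_t}|^2$ on a regular level set $\Sigma_t=u^{-1}(t)$, together with the pointwise decomposition $|\nabla^2 u|^2-|\nabla|\nabla u||^2=|A_{\Sigma_t}|^2|\nabla u|^2+|\nabla_{\Sigma_t}|\nabla u||^2$. After integrating by parts, applying the co-area formula, and completing squares to absorb the $\tan(3u/2)$-driven cross terms into a trace-free Hessian tensor $\mathcal E$, I expect an inequality schematically of the form
\begin{equation*}
\int_M \frac{|\mathcal E|^2}{|\nabla u|}\,dV + \int_M(R-6)|\nabla u|\,dV - \int_{-w/2}^{w/2}\!\! 4\pi\chi(\Sigma_t)\,dt \leq C\bigl(H_0-2\tan(3w/4)\bigr),
\end{equation*}
for some area-weighted $C>0$ collected from $\partial M^3$. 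The constants $6$ and $\tfrac{4}{3}$ both emerge from matching the coefficient $2\tan(3u/2)$ in \eqref{e:toricplan} against its boundary evaluation $\pm 2\tan(3w/4)$.

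The topological hypothesis enters last: because $H_2(M^3;\mathbb{Z})$ contains no spherical classes, generic level sets $\Sigma_t$ decompose into surfaces of positive genus, forcing $\chi(\Sigma_t)\leq 0$. With $R\geq 6$ the full left-hand side is then non-negative, yielding $H_0\geq 2\tan(3w/4)$ and hence \eqref{eq:widthinequality}. Rigidity is obtained by tracing equality through the argument: $\mathcal E\equiv 0$ kills the trace-free Hessian of $u$; $|\nabla u|\equiv 1$ and $R\equiv 6$ together make each regular level set a flat, totally umbilic torus; and the resulting level-set geometry reassembles into the warped product $ds^2+\cos^{4/3}(3s/2)g_0$.

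The main obstacle is twofold. First, rigorously solving \eqref{e:toricplan} is nontrivial because of the singularity of $\tan(3u/2)$ at $u=\pm\pi/3$ and the nonlinear $u$-dependence of the coefficient; I expect this to require approximation by truncated coefficients combined with careful fixed-point estimates. Second, the critical set of $u$ must be controlled so that the co-area formula and Gauss-Bonnet on $\Sigma_t$ remain valid for a.e.\ $t$, which should be handleable by now-standard techniques in this literature.
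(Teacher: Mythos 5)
Your choice of PDE is genuinely different from the paper's, and the difference is not cosmetic; it destroys the pointwise positivity that the argument relies on. The paper does \emph{not} solve an equation whose coefficient depends on $u$. Instead it sets $f(x)=\tilde f(r(x))$ where $r(x)=d(x,\partial_-M^3)$ and $\tilde f(\tau)=\tan\bigl(\tfrac32(\tau-w_0/2)\bigr)$, and solves $\Delta u+3f|\nabla u|=0$ with $u=\pm1$ on $\partial_\pm M^3$. With that $f$, one has $\langle\nabla f,\nabla u\rangle\le|\nabla f||\nabla u|=\tilde f'(r)|\nabla u|$, and the ODE $6+6\tilde f^2-4\tilde f'\ge 0$ (which holds with equality for this $\tilde f$) forces the pointwise estimate $(R+6f^2)|\nabla u|-4\langle\nabla f,\nabla u\rangle\ge(R-6)|\nabla u|$. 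That sign is the engine of the whole proof. Note in particular that in the rigidity model, the paper's $u$ is \emph{not} $s$ and $|\nabla u|\not\equiv 1$; rather, $|\nabla u|$ is proportional to $\cos^{2/3}(3s/2)$ and is calibrated precisely so that $\overline\nabla^2 u\equiv 0$.

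In your plan the coefficient is $2\tan(3u/2)|\nabla u|$, i.e. $f=\tfrac23\tan(3u/2)$, so $\nabla f=\sec^2(3u/2)\nabla u$ exactly. The Cauchy--Schwarz slack vanishes identically and $\langle\nabla f,\nabla u\rangle=\sec^2(3u/2)|\nabla u|^2$ is \emph{quadratic}, not linear, in $|\nabla u|$. Then the bulk integrand from Lemma \ref{integralformula} becomes
\begin{equation*}
\frac{|\overline\nabla^2u|^2}{|\nabla u|}+\Bigl(R+\tfrac83\tan^2(3u/2)\Bigr)|\nabla u|-4\sec^2(3u/2)|\nabla u|^2,
\end{equation*}
which has no sign. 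For instance, in the very model warped product you are trying to characterize one has $|\nabla u|\equiv 1$, $R=6$, and $\overline\nabla^2 u = \tfrac23\tan(3u/2)\,\nu\otimes\nu-\tfrac13\tan(3u/2)(g-\nu\otimes\nu)\neq 0$; a short computation gives a pointwise value $2-\tfrac23\tan^2(3u/2)$ for the full integrand, which turns negative for $|u|$ close to $\pi/3$. So no pointwise inequality of the schematic form you write can hold, and no choice of trace-free $\mathcal E$ built out of $\overline\nabla^2 u$ will rescue it, because $\overline\nabla^2 u$ itself does not vanish in the model. A secondary but real problem is that Proposition \ref{p:existence} covers only position-dependent Lipschitz $f$; a $u$-dependent coefficient with a singularity at $u=\pm\pi/3$ is a different (fully nonlinear) existence problem, and without an a priori bound $|\nabla u|\lesssim 1$ the shrink-and-pass-to-a-limit you propose is not obviously available.

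To repair the argument you should decouple the coefficient from $u$: take $f$ as a function of the signed distance to $\partial_-M^3$, choose it so that $6+6\tilde f^2-4\tilde f'\ge 0$ (and $=0$ on the nose, to make the model extremal), extend $\tilde f$ linearly past $w_0=\min\{w,\tfrac{2\pi}{3}-\delta\}$ to avoid the singularity of $\tan$, and match the boundary inequalities $2f-H\le 0$ on $\partial_-M^3$, $2f+H\ge 0$ on $\partial_+M^3$. Then Cauchy--Schwarz on $\langle\nabla f,\nabla u\rangle$, Lemma \ref{integralformula}, and the topological bound $\chi(\Sigma_t)\le 0$ give the width inequality; equality forces $\overline\nabla^2 u=0$, $R=6$, $\langle\nabla r,\nabla u\rangle=|\nabla u|$ a.e., and $\chi(\Sigma_t)=0$, from which $|\nabla u|\neq 0$ everywhere and the warped product structure follows.
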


\begin{remark}
In analogy with Corollary \ref{c:linkCor}, a straightforward consequence of Theorem \ref{ToricBand} asserts that the distance between linked knots in a rational homology $3$-sphere with scalar curvature at least $6$ cannot exceed $2\pi/3$. Compared to Corollary \ref{c:linkCor}, however, this upper bound is never attained by a smooth Riemannian manifold.
\end{remark}

Contemporaneously to the present work, Chai-Wan \cite[page 8]{ChaiWan} have established a version of Theorem \ref{ToricBand} for $3$-dimensional initial data sets using a similar method. In \cite{Cecchini, CecchiniZeidler,Zeidler}, Cecchini and Zeidler proved a variety of band-width inequalities in arbitrary dimensions using Callias operators. We also point out R\"ade's article \cite{Rade} on band inequalities based on the method of $\mu$-bubbles, as well as the related rigidity statement \cite[Corollary 1.4]{EichmairGallowayMendes} of Eichmair-Galloway-Mendes.

Finally, in Section \ref{S:waist} we prove two {\emph{waist inequalities}}, which articulate the heuristic that $3$-dimensional positive scalar curvature manifolds macroscopically resemble $1$-dimensional complexes. 
Informally speaking, Theorem \ref{thm:waist} below asserts that a $3$-manifold with scalar curvature at least $R_0>0$ admits a map to an interval whose fibers have average area bounded above in terms of $R_0$. Results of this flavor go back to Gromov-Lawson \cite{GromovLawson}, and the state-of-the-art is due to Liokumovich-Maximo \cite{LiokumovichMaximo}. 

\begin{theorem}\label{thm:waist}
Let $(M^3,g)$ be a closed $3$-dimensional Riemannian manifold with scalar curvature bounded below by a positive constant, $R\ge R_0 >0$. Assume that $\textnormal{diam}(M^3)\geq \frac{4\pi}{\sqrt{3R_0}}$, and let $p,q\in M^3$ be points whose mutual distance achieves the diameter. Then there exists a spacetime harmonic function $u\in C^{2,\alpha}(M^3 \setminus\{p,q\})\cap C^{0,1}(M^3)$ for any $\alpha\in(0,1)$ with min/max values $u(p)=-1$ and $u(q)=1$, such that
\begin{equation}\label{e:waistest}
\mathrm{Avg}(\Sigma_t)\leq  \frac{16\pi}{R_0}\left(b_2 +1\right),
\end{equation}
where $\mathrm{Avg}(\Sigma_t)$ is the average $2$-dimensional Hausdorff measure of all $u$-level sets $\Sigma_t=u^{-1}(t)$, and $b_2$ denotes the second Betti number of $M^3$.
\end{theorem}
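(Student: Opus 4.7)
The plan is to adapt the spacetime harmonic function techniques developed throughout this paper, now with a function having two prescribed singular points, and then to combine a Stern-type integral identity with a Morse-theoretic count of level sets.

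First, I would construct $u$ by approximation. For $\epsilon>0$ set $\Omega_\epsilon := M^3 \setminus (B_\epsilon(p) \cup B_\epsilon(q))$ and solve the spacetime Laplace equation on $\Omega_\epsilon$ with Dirichlet data $-1$ on $\partial B_\epsilon(p)$ and $+1$ on $\partial B_\epsilon(q)$. The fixed-point construction from Section \ref{S:Preliminaries} yields $u_\epsilon \in C^{2,\alpha}(\overline{\Omega_\epsilon})$, while the maximum principle forces $|u_\epsilon|\leq 1$. Passing to a subsequential limit as $\epsilon \to 0$ with interior $C^{2,\alpha}$ estimates and a uniform Lipschitz bound produces $u \in C^{2,\alpha}(M^3\setminus\{p,q\}) \cap C^{0,1}(M^3)$ with $u(p)=-1$ and $u(q)=+1$. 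The diameter hypothesis $\mathrm{diam}(M^3)\geq 4\pi/\sqrt{3R_0}$ is what rules out degenerate limits and ensures the approximations remain genuinely nonconstant, playing a role analogous to the band-width thresholds in Theorems \ref{Meyer boundary} and \ref{ToricBand}.

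Next, I would apply the three-dimensional Bochner-Stern identity tailored to spacetime harmonic functions. Pairing the Bochner formula for $|\nabla u|$ with the traced Gauss equation along regular level sets $\Sigma_t := u^{-1}(t)$, integrating against the coarea measure, and discarding the nonnegative trace-free Hessian and gradient terms yields
\begin{equation*}
\frac{R_0}{2}\int_{M^3}|\nabla u|\,dV \leq 2\pi \int_{-1}^1 \chi(\Sigma_t)\,dt.
\end{equation*}
Invoking the coarea formula $\int_{M^3}|\nabla u|\,dV = \int_{-1}^1 \mathcal{H}^2(\Sigma_t)\,dt$ and recognizing $\mathrm{Avg}(\Sigma_t)=\tfrac{1}{2}\int_{-1}^1 \mathcal{H}^2(\Sigma_t)\,dt$, the estimate \eqref{e:waistest} reduces to the average Euler characteristic bound $\int_{-1}^1 \chi(\Sigma_t)\,dt \leq 8(b_2+1)$.

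This final topological estimate is where I expect the main obstacle to lie. Each regular $\Sigma_t \subset M^3\setminus\{p,q\}$ is a smooth embedded closed surface separating $p$ from $q$; higher-genus components contribute nonpositively to $\chi$, so only the spherical components matter. The key observation is the identity $b_2(M^3\setminus\{p,q\}) = b_2 + 1$, which follows from the long exact sequence of the pair $(M^3,M^3\setminus\{p,q\})$ together with the fact that $H_2$ of a closed orientable $3$-manifold is free. I would argue via Morse theory on $u$ that the average number of sphere components of $\Sigma_t$ is controlled by this enhanced second Betti number: sphere components which bound in $M^3\setminus\{p,q\}$ must appear in creation/annihilation pairs as $t$ varies through critical values, hence integrate to a bounded contribution on average, while the number of non-bounding sphere components is controlled at each $t$ by $\mathrm{rank}\,H_2(M^3\setminus\{p,q\};\mathbb{Z})=b_2+1$. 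Combining the topological count with the integrated Stern inequality then gives \eqref{e:waistest}, with the subtle part being to make the Morse-theoretic averaging of the spherical components rigorous in the presence of the singularities at $p$ and $q$.
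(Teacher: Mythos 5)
Your overall architecture is recognizably the paper's: approximate by solving the spacetime harmonic Dirichlet problem on $M^3 \setminus (B_\varepsilon(p)\cup B_\varepsilon(q))$, apply the coarea-form integral inequality (Lemma \ref{integralformula}), and reduce the waist bound to a topological estimate on the level sets. But there are two genuine gaps.

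First, you never specify the auxiliary function $f$. The choice is not free: the paper takes $f$ to be a cotangent-type function of the distance to $p$ and to $q$ (constant in a middle region), singular at both points. This specific form is what makes the pointwise estimate $R_0 + 6f^2 - 4|\nabla f| \geq \tfrac{1}{2}R_0$ hold, and \emph{this} is where the diameter hypothesis $\mathrm{diam}(M^3) \geq \tfrac{4\pi}{\sqrt{3R_0}}$ enters — it bounds $|\nabla f|$ by $\tfrac{8\pi^2}{3w^2} \leq \tfrac{1}{2}R_0$, not (as you say) by ruling out degenerate limits. Moreover the $\cot$-singularity of $f$ at $p,q$ is what makes the boundary terms of Lemma \ref{integralformula} vanish in the limit, and what makes the barrier construction of Lemma \ref{L:A convergence} applicable, giving the uniform Lipschitz bound and hence $u\in C^{0,1}(M^3)$. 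Without specifying $f$, both of these steps — the bulk estimate and the limit — are not actually established.

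Second, and more seriously, your Morse-theoretic treatment of the spherical level-set count does not work. You propose to show that null-homologous spherical components appear in creation/annihilation pairs and ``integrate to a bounded contribution on average,'' but nothing bounds the number of such pairs or the length of the time intervals on which they persist, so there is no a priori bound on the time-integral. The paper's argument is entirely different and pointwise in $t$: by the strong maximum principle, no component of $M^3_\varepsilon \setminus \Sigma_t$ can be bounded entirely by spherical components of $\Sigma_t$ (else $u_\varepsilon$ would have an interior extremum there). Lemma \ref{topangnha} — a purely topological counting argument via the long exact sequence of the triple $(M^3_\varepsilon,\partial M^3_\varepsilon\cup\mathcal{S}^2,\partial M^3_\varepsilon)$ and Poincar\'{e}-Lefschetz duality — then gives the \emph{uniform} bound
\[
n_\varepsilon(t) \leq \operatorname{rank} H_2(M^3_\varepsilon,\partial M^3_\varepsilon;\mathbb{Z}) + 2 - 1 = b_2(M^3) + 1
\]
for almost every $t$. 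No averaging or Morse-theoretic bookkeeping is needed. This maximum-principle step, applied to the count of spherical components, is the key idea you are missing; replacing your creation/annihilation argument with it closes the main gap. (There is also a small constant slip — Lemma \ref{integralformula} has $4\pi\chi$, not $2\pi\chi$, with the per-sphere contribution being $8\pi$ — but your two errors cancel in the final constant.)
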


The next result decomposes a positive scalar curvature $3$-manifold into bands whose widths and boundary areas are bounded above in terms of the smallest value of its scalar curvature. Theorem \ref{thm:dice} closely resembles the ``Slice" part of Chodosh-Li's Slice-and-Dice construction \cite[Section 6.3]{ChodoshLi}. Below, given a function $u:M^3\to \mathbb{R}$, we use the notation $n(t)$ for the number of path components of the $t$-level set $u^{-1}(t)$. See Figure \ref{figV} for a schematic depiction of the following result.

\begin{theorem}\label{thm:dice}
Let $(M^3,g)$ be a closed $3$-dimensional Riemannian manifold with scalar curvature bounded below by a positive constant, $R\ge R_0 >0$. Then there exist a sequence of closed regions $\{V_i\}_{i=1}^I$ covering $M^3$ which satisfy the following:
\begin{enumerate}
\item The number of regions is bounded above by the diameter and scalar curvature lower bound 
$I-1\leq \tfrac{\sqrt{3R_0}}{4\pi}\mathrm{diam}(M^3)$.

\item For each $i$, the boundary $\partial V_i$ is a $C^2$ surface.

\item Elements of the sequence only have nontrivial intersection with their neighbors, and the intersection consists entirely of boundary components, that is, $V_i \cap V_j =\emptyset$ unless $j=i\pm 1$, in which case $V_i \cap V_j =\partial V_i \cap \partial V_j$.

\item The average area of components in each intersection surface satisfies
\begin{equation}
\frac{|\partial V_i \cap \partial V_{i+1}|}{N_i}\leq \frac{16\pi }{R_0},\quad\quad\quad i=1,\ldots, I-1,
\end{equation}
where $N_i$ denotes the number of components of $\partial V_i \cap \partial V_{i+1}$.

\item The intersection surfaces lie within a fixed distance to one another
\begin{equation}
d(\partial V_{i-1} \cap \partial V_i , \partial V_i \cap \partial V_{i+1})\leq \frac{16\pi}{\sqrt{3R_0}},\quad\quad\quad i=2,\ldots,I-1,
\end{equation}
with the first and last regions staying within a fixed Hausdorff distance to their boundaries
\begin{equation}
d_H(V_1,\partial V_1)+d_H(V_I,\partial V_I)\leq \frac{32\pi}{\sqrt{3R_0}}.
\end{equation}
\end{enumerate}
\end{theorem}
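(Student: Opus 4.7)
The plan is to apply Theorem \ref{thm:waist} to obtain a spacetime harmonic function $u:M^3\to[-1,1]$ with $u(p)=-1$ and $u(q)=1$, where $p,q$ are diameter-achieving points, and then to slice $M^3$ along regular level sets of $u$ by setting $V_i=u^{-1}([t_{i-1},t_i])$ for suitable values $-1=t_0<t_1<\cdots<t_I=1$. The small-diameter case $\diam(M^3)<4\pi/\sqrt{3R_0}$ is handled separately by taking $I=1$ and $V_1=M^3$, whereupon conditions (1)--(5) are either vacuous or immediate. In the large-diameter regime I would set $I=\lfloor\sqrt{3R_0}\diam(M^3)/(4\pi)\rfloor+1$, which yields (1), and partition $[-1,1]$ into $I$ equal subintervals $[s_{i-1},s_i]$ of length $2/I$. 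The task then reduces to locating a regular value $t_i$ within each subinterval --- furnishing (2) via Sard's theorem --- so that (4) and (5) hold simultaneously; condition (3) is automatic from the definition of $V_i$.

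For (4), I would revisit the Bochner/Gauss--Bonnet computation underpinning Theorem \ref{thm:waist}. Applying Gauss--Bonnet componentwise to each regular level set $\Sigma_t=u^{-1}(t)$, and discarding the components of nonpositive Euler characteristic --- whose total count across regular $t$ is controlled by the second Betti number $b_2$ via the Morse-theoretic structure of $u$ --- yields a refined estimate of the form $|\Sigma_t|\leq\frac{8\pi}{R_0}\bigl(N(t)+\mathrm{bad}(t)\bigr)$ for regular $t$, where $\mathrm{bad}(t)$ counts the discarded components. Averaging this bound over each subinterval $[s_{i-1},s_i]$ and invoking pigeonhole then produces a regular value $t_i\in(s_{i-1},s_i)$ at which the per-component area satisfies $|\Sigma_{t_i}|/N_i\leq 16\pi/R_0$.

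For (5), I would exploit the gradient flow of $u$: the length of a gradient flow line from $\Sigma_a$ to $\Sigma_b$ equals $\int_a^b|\nabla u|^{-1}\,du$ along that curve, and the spacetime harmonic integral identity controls the area-weighted average of this quantity in terms of $\diam(M^3)$ and the scalar curvature lower bound. Since the $t_i$'s are evenly spaced in $[-1,1]$ with $I\approx\sqrt{3R_0}\diam(M^3)/(4\pi)$, uniform partitioning in $u$ combined with this flow-length control forces consecutive level sets to lie within mutual distance at most $16\pi/\sqrt{3R_0}$. The Hausdorff-distance bounds on the extremal regions $V_1$ and $V_I$ follow by treating the endpoints $\{p\}$ and $\{q\}$ as degenerate fibers $\Sigma_{-1}$ and $\Sigma_1$ and running the same estimate.

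The principal obstacle is guaranteeing that (4) and (5) can be realized by a \emph{single} regular value $t_i$ in each subinterval: one must show that the set of ``bad'' $t$'s --- where either the per-component area estimate or the gradient-flow distance estimate fails --- occupies strictly less than the full subinterval $[s_{i-1},s_i]$. The choice $I=\lfloor\sqrt{3R_0}\diam(M^3)/(4\pi)\rfloor+1$ is calibrated to provide exactly this slack, with the bad set for (4) controlled by the finite quantity $b_2$ and the bad set for (5) controlled by the global $\diam$ budget.
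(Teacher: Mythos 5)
Your proposal takes a genuinely different route from the paper's, but it contains a gap that I do not think can be filled as stated. The paper does \emph{not} slice a single global spacetime harmonic function: it constructs the regions $V_i$ \emph{inductively}, each time building a fresh Riemannian band of width $\sim w_0=\frac{4\pi}{\sqrt{3R_0}}$ around the previously constructed boundary $\partial V_{i-1}$, solving a \emph{new} spacetime harmonic Dirichlet problem on that band with a potential $f_\varepsilon$ that blows up near the band's boundary so that the boundary terms in Lemma \ref{integralformula} have the right sign, and then pigeonholing over that band's levels to extract one good level set. This local construction is what makes (4) and (5) fall out automatically: (5) because the band has width $\sim w_0$ by construction, and (4) because the integral inequality holds on each band separately with favourable boundary contributions.

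The obstruction to your single-function approach is exactly at conditions (4) and (5), and in both cases it is a failure of localization. For (5), you need $d(\Sigma_{t_{i-1}},\Sigma_{t_i})\leq\frac{16\pi}{\sqrt{3R_0}}$, but consecutive $u$-level sets of a single spacetime harmonic function have no reason to be at bounded distance: $|\nabla u|$ is not bounded below, so two level sets that differ by $\Delta u=2/I$ can be arbitrarily far apart. The gradient-flow argument you sketch computes $\int|\nabla u|^{-1}\,du$, whose area-weighted integral over $u^{-1}([t_{i-1},t_i])$ is, by the coarea formula, just the Riemannian volume of that slab --- a quantity that is \emph{not} controlled by the diameter or the scalar curvature lower bound, and that in any case is an upper bound on a sup of flow-line lengths, not the minimal distance. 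For (4), the integral identity \eqref{e:penintform} from Theorem \ref{thm:waist} gives an averaged bound over the \emph{whole} interval $[-1,1]$. To pigeonhole a good value into \emph{each} subinterval $[s_{i-1},s_i]$ you would need the inequality restricted to the slab $u^{-1}([s_{i-1},s_i])$; but restricting Lemma \ref{integralformula} to a subregion reintroduces boundary terms involving the mean curvatures of $\Sigma_{s_{i-1}}$ and $\Sigma_{s_i}$, which you have no control over, so the pigeonhole does not localize. (As a smaller point, your heuristic of discarding components of nonpositive Euler characteristic and counting them via $b_2$ is not what drives the per-component estimate in the dice argument; the paper simply bounds $4\pi\chi(\Sigma_t)\leq 8\pi N(t)$ using that each component has $\chi\leq 2$, and the Betti number enters Theorem \ref{thm:waist} but not Theorem \ref{thm:dice}.) The way the paper avoids both difficulties is by choosing a \emph{new} blow-up potential $f_\varepsilon$ on each narrow band so that the boundary terms acquire a definite sign, making the integral inequality a self-contained statement on that band; this is the ingredient your construction is missing.
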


\medskip
\noindent\textbf{Acknowledgements.} The authors would like to thank Hubert Bray, Simon Brendle, Simone Cecchini, Richard Schoen, Rudolf Zeidler, and Jintian Zhu for insightful discussions. 
\section{Preliminaries}\label{S:Preliminaries}

\subsection{Initial data sets}\label{s:ids}
The main arguments of the present work rely on a common tool, namely the spacetime harmonic function. To describe these functions, their properties, and their relevance to comparison geometry, we require some preliminary notions. A triple $(M^n,g,k)$ consisting of a Riemannian manifold $(M^n,g)$ and a symmetric 2-tensor $k$ is called an {\emph{initial data set}}. In mathematical general relativity, such data represent a spatial slice of spacetime with induced metric $g$ and second fundamental form $k$, and can be thought of as initial conditions for the Einstein equations \cite{Lee}. The energy and momentum densities are important local invariants of the slice, and are given by
\begin{equation}
\mu=\frac12\left(R-|k|^2+(\text{Tr}_g k)^2\right),\quad\quad\quad J=\div\left(k-(\text{Tr}_g k)g\right).
\end{equation}
These expressions arise directly from the Gauss-Codazzi relations associated with the embedding into spacetime, by taking traces. From a physical perspective, these quantities agree with certain components of the stress-energy tensor, which encodes relevant information concerning the matter fields on spacetime. A typical requirement for the physical significance of $(M^n,g,k)$ is the dominant energy condition, which stipulates that $\mu\ge|J|$ holds across $M^n$.
Geometrically, this may be viewed as a type of lower bound for scalar curvature.

In \cite{HKK} the spacetime harmonic equation 
was introduced to study the total mass of asymptotically flat $3$-dimensional initial data sets. 
This equation is given by
\begin{equation}\label{e:spacetimeharmonicequation0}
    \Delta u+\left(\text{Tr}_g k\right)|\nabla u|=0,
\end{equation}
and solutions are referred to as \textit{spacetime harmonic functions}. A fundamental property of these functions is 
that they satisfy an integral inequality \cite[Proposition 3.2]{HKK}, which relates the dominant energy condition quantity $\mu-|J|$ with boundary geometry of the initial data set. The left-hand side of equation \eqref{e:spacetimeharmonicequation0} is equivalent to the trace of the {\emph{spacetime Hessian}}
\begin{equation}\label{e:spacetimehessian0}
{\overline{\nabla}}^2u:=\nabla^2u+|\nabla u|k,
\end{equation}
which also plays a significant role in the integral inequality. For a discussion of the geometric meaning of the spacetime Hessian, see the survey article \cite{BHKKZ}. In addition to providing a proof of the spacetime positive mass theorem in \cite{HKK}, there have been various applications of spacetime harmonic functions to both initial data sets and Riemannian geometry in \cite{AHK, BHKKZ, BHKKZ2, BKKS, Chai, ChaiWan, HZ, Tsang1, Tsang2}.

\subsection{Spacetime harmonic functions on bands and scalar curvature}

Let $(M^n,\partial_\pm M^n,g)$ be an $n$-dimensional Riemannian band. Given a function $f\in\mathrm{Lip}(M^n)$, we may define the symmetric $2$-tensor $k:=fg$ and consider the auxiliary initial data set $(M^n,g,k)$. Throughout the paper, our arguments will focus on the spacetime harmonic functions associated with $(M^n,g,k)$ having Dirichlet boundary conditions. The following proposition is an immediate consequence of the more general existence result discussed in \cite[Section 4]{HKK}. 

\begin{proposition}\label{p:existence} Let $(M^n,\partial_\pm M^n,g)$ be an $n$-dimensional Riemannian band, and consider a function $f\in \mathrm{Lip}(M^n)$, as well as constants $c_-<c_+$. Then for any $\alpha\in(0,1)$, there exists a unique solution $u\in C^{2,\alpha}(M^n)$ of the spacetime harmonic Dirichlet problem
\begin{equation}\label{e:bandspacetimeharmoniceq}
\begin{cases}
    \Delta u+nf|\nabla u|=0&\text{ in }M^n ,\\
    u= c_\pm&\text{ on }\partial_\pm M^n .
\end{cases}
\end{equation}
\end{proposition}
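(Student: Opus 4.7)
The plan is to adapt the existence argument for spacetime harmonic functions from \cite[Section 4]{HKK} to the compact band setting, which is technically simpler than the asymptotically flat case treated there since no decay analysis at infinity is needed. The equation in \eqref{e:bandspacetimeharmoniceq} is quasilinear and uniformly elliptic at the level of its principal part, but its lower order term $nf|\nabla u|$ is only Lipschitz and fails to be smooth at critical points of $u$. This dictates a regularize-and-pass-to-the-limit strategy.

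First, I would replace $|\nabla u|$ by $\sqrt{|\nabla u|^2+\varepsilon}$ for $\varepsilon>0$ and solve the resulting smooth quasilinear Dirichlet problem by a Leray-Schauder fixed point argument. Concretely, define $T_\varepsilon\colon C^{1,\alpha}(M^n)\to C^{1,\alpha}(M^n)$ by sending $w$ to the solution $u$ of the linear problem
\begin{equation*}
\Delta u+nf\frac{\nabla w\cdot\nabla u}{\sqrt{|\nabla w|^2+\varepsilon}}=0\quad\text{in }M^n,\qquad u=c_\pm\text{ on }\partial_\pm M^n.
\end{equation*}
Solvability of this linear problem is standard Schauder theory, and $T_\varepsilon$ is compact via the embedding $C^{1,\alpha}\hookrightarrow C^{1,\beta}$. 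The maximum principle yields the uniform bound $c_-\le u\le c_+$, and De Giorgi-Nash-Moser estimates provide uniform $C^{1,\alpha}$ bounds on the one-parameter family used in Leray-Schauder, producing a fixed point $u_\varepsilon$.

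Next I would send $\varepsilon\to 0$. The crucial input, which is the technical heart of \cite{HKK}, is a $C^{1,\alpha}$ bound for $u_\varepsilon$ that is uniform in $\varepsilon$; this follows from strict ellipticity of the regularized operators together with the boundedness of $f$ and compactness of $M^n$. A subsequential limit yields a solution $u\in C^{1,\alpha}(M^n)$ of \eqref{e:bandspacetimeharmoniceq}. On the open set $\{|\nabla u|>0\}$, the right-hand side of $\Delta u=-nf|\nabla u|$ lies in $C^{0,\alpha}$, so Schauder theory gives $u\in C^{2,\alpha}$ there, and the arguments of \cite{HKK} extend the regularity across the measure-zero critical set.

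For uniqueness, I would argue by comparison. Given two solutions $u_1,u_2$ with the same Dirichlet data, the difference $v=u_1-u_2$ satisfies $\Delta v+nf\,b\cdot\nabla v=0$, where $b=(\nabla u_1+\nabla u_2)/(|\nabla u_1|+|\nabla u_2|)$, extended by zero at common critical points, is a bounded measurable vector field with $|b|\le 1$; this uses the identity $|\nabla u_1|-|\nabla u_2|=b\cdot\nabla v$ away from common critical points, together with the observation that $\Delta v=0$ where both gradients vanish. The strong maximum principle for uniformly elliptic equations with bounded drift, combined with $v|_{\partial M^n}=0$, forces $v\equiv 0$. The principal obstacle throughout is securing the uniform $C^{1,\alpha}$ estimate as $\varepsilon\to 0$; this is standard but technical, and is precisely the substance of the \cite{HKK} existence argument being invoked.
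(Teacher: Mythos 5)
Your proposal reconstructs, in outline, the existence argument of \cite[Section 4]{HKK}, which is precisely what the paper invokes without reproving; the overall structure (regularize by $\sqrt{|\nabla u|^2+\varepsilon}$, Leray--Schauder fixed point for a frozen-coefficient linear problem, $\varepsilon$-uniform estimates from the uniformly bounded drift $|b_\varepsilon|\le n\|f\|_{L^\infty}$, pass to the limit, bootstrap regularity, compare for uniqueness) matches. Two small points worth tightening. First, the final regularity step is simpler than you describe: once $u\in C^{1,\alpha}$, the function $|\nabla u|$ is globally $C^{0,\alpha}$ (since $x\mapsto|x|$ is $1$-Lipschitz, $\bigl||\nabla u(x)|-|\nabla u(y)|\bigr|\le|\nabla u(x)-\nabla u(y)|$), so $-nf|\nabla u|\in C^{0,\alpha}(M^n)$ with no need to work away from the critical set, and Schauder gives $u\in C^{2,\alpha}(M^n)$ directly; the critical set is only an obstruction to regularity beyond $C^{2,\alpha}$, which is the point of the paper's Remark~\ref{alkjflkjah}. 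Second, De Giorgi--Nash--Moser yields $C^{0,\alpha}$ control, not $C^{1,\alpha}$; the uniform $C^{1,\alpha}$ bound needed for the Leray--Schauder homotopy and the $\varepsilon\to0$ limit comes instead from $W^{2,p}$ Calder\'on--Zygmund estimates for $\Delta u_\varepsilon=-b_\varepsilon\cdot\nabla u_\varepsilon$ with $\|b_\varepsilon\|_{L^\infty}\le n\|f\|_{L^\infty}$ (absorbing the gradient by interpolation) together with Morrey embedding. Your uniqueness argument via $b=(\nabla u_1+\nabla u_2)/(|\nabla u_1|+|\nabla u_2|)$, $|b|\le1$, is correct.
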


The next result expresses the fundamental integral inequality associated with spacetime harmonic functions in the current setting, which is specialized to dimension $3$. More precisely, if $(M^3,g,k=fg)$ is an initial data set as described above, then a brief calculation shows that its energy and momentum densities take the form
\begin{align}
\mu=\frac12R+3f^2,\quad\quad \quad J=-2\nabla f,
\end{align}
wherever $f$ is differentiable. With this observation in mind, the more general inequality derived in \cite[Proposition 3.2]{HKK} directly implies the next lemma.
We note that although \cite[Proposition 3.2]{HKK} is stated in coarea form, this may be re-expressed as a bulk integral since the critical point set 
$\{x\in\Omega:\nabla u(x)=0\}$ is of Hausdorff codimension at least $2$; this latter fact  is due to \cite[Theorem 1.1]{NaberV} after viewing  
the spacetime Laplace equation as a linear equation in which the first order coefficient are $L^\infty$.

\begin{lemma}\label{integralformula}
Let $(M^3,\partial_\pm M^3,g)$ be a $3$-dimensional Riemannian band, and let $f\in \mathrm{Lip}(M^3)$. If $u\in C^{2,\alpha}(M^3)$, $\alpha\in(0,1)$ solves boundary value problem \eqref{e:bandspacetimeharmoniceq}, then
\begin{align}\label{integralformula2}
\begin{split}
\int_{\partial_-M^3}2|\nabla u|(2f-H)&dA -\int_{\partial_+M^3}2|\nabla u|(2f+H)dA\\
\geq&\int_{M^3}\left(\frac{|\overline{\nabla}^2u|^2}{|\nabla u|}+(R+6f^2)|\nabla u|-4\langle\nabla f,\nabla u\rangle\right)dV -\int_{c_-}^{c_+} 4\pi\chi(\Sigma_t)dt
\end{split}
\end{align}
where 
$H$ is the outward mean curvature of $\partial M^3$, and $\chi(\Sigma_t)$ is the Euler characteristic of regular level sets $\Sigma_t:=u^{-1}(t)$.
\end{lemma}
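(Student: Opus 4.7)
The plan is to derive \eqref{integralformula2} by specializing the general spacetime harmonic integral inequality of \cite[Proposition 3.2]{HKK} to the pure-trace initial data set $(M^3, g, k = fg)$, and then converting from the coarea form in which that proposition is originally stated to the bulk form displayed here. The paragraph immediately preceding the lemma in fact advertises this strategy, so the work consists of carrying it out carefully.

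First, I would set up the dictionary between the general spacetime setting and the pure-trace case. With $k = fg$ one has $\mathrm{Tr}_g k = 3f$, so the general spacetime harmonic equation $\Delta u + (\mathrm{Tr}_g k)|\nabla u| = 0$ coincides with the boundary value problem \eqref{e:bandspacetimeharmoniceq}, and the spacetime Hessian becomes $\overline{\nabla}^2 u = \nabla^2 u + f|\nabla u|g$. Using $|k|^2 = 3f^2$ and $(\mathrm{Tr}_g k)^2 = 9f^2$ one obtains the energy density $\mu = \tfrac{1}{2}R + 3f^2$ already recorded in the excerpt, while $J = \mathrm{div}(k - (\mathrm{Tr}_g k)g) = \mathrm{div}(-2fg) = -2\nabla f$ recovers the stated momentum density. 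Substituting $\mu$ and $J$ into the combination $2\mu|\nabla u| + 2\langle J, \nabla u\rangle$ that appears in the bulk integrand of \cite[Proposition 3.2]{HKK} yields exactly $(R + 6f^2)|\nabla u| - 4\langle\nabla f, \nabla u\rangle$, reproducing the non-Hessian part of the bulk integrand of \eqref{integralformula2}.

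Next, I would track the boundary contributions. Since $u \equiv c_\pm$ on $\partial_\pm M^3$, the gradient $\nabla u$ is parallel to the outward unit normal $\nu$: on $\partial_-M^3$, where $c_-$ is the minimum value, $\nabla u$ points opposite to $\nu$, while on $\partial_+M^3$, where $c_+$ is the maximum value, $\nabla u$ is a positive multiple of $\nu$. This sign dependence, combined with the boundary integrand of \cite[Proposition 3.2]{HKK} involving the outward mean curvature $H$ and the trace $\mathrm{Tr}_g k = 3f$, produces factors proportional to $(2f - H)|\nabla u|$ on $\partial_-M^3$ and $(2f + H)|\nabla u|$ on $\partial_+M^3$. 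The remaining term $-\int_{c_-}^{c_+} 4\pi\chi(\Sigma_t) dt$ arises from applying the Gauss--Bonnet theorem to each regular level set $\Sigma_t$ within the Bochner-based coarea identity underlying \cite[Proposition 3.2]{HKK}; this is the specifically three-dimensional feature of the inequality.

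Finally, I would convert the coarea statement of \cite[Proposition 3.2]{HKK} into the bulk form of \eqref{integralformula2} by applying the standard coarea formula to the $|\nabla u|^{-1}$-weighted Hessian integrand. This requires controlling the critical set $\{\nabla u = 0\}$, and here one invokes \cite[Theorem 1.1]{NaberV}, which gives Hausdorff codimension at least $2$, after regarding the spacetime Laplace equation as a linear elliptic equation whose first-order coefficient $3f\,\nabla u/|\nabla u|$ lies in $L^\infty$. The main obstacle will be this final step: one must justify the coarea-to-bulk rewriting despite the singular denominator, which demands combining the Naber--Valtorta codimension bound with an approximation argument through regular values of $u$ to pass the coarea identity to a bulk inequality. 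The other steps are algebraic bookkeeping once the HKK identity is in hand.
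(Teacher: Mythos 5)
Your proposal follows exactly the route the paper takes: specialize \cite[Proposition 3.2]{HKK} to the pure-trace data $k=fg$, which gives $\mu=\tfrac12 R+3f^2$ and $J=-2\nabla f$, and then re-express the coarea form as a bulk integral using the Naber--Valtorta codimension bound \cite[Theorem 1.1]{NaberV} on the critical set of $u$. The paper treats the lemma as an immediate corollary of these two observations (and does not write out the details), so your more explicit bookkeeping of the boundary and Gauss--Bonnet terms is consistent with, and fills in, the paper's brief argument.
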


\begin{remark}\label{alkjflkjah}
Even though the function $f$ is only Lipschitz, Rademacher's Theorem ensures that its derivatives exist almost everywhere, justifying the appearance of $\nabla f$ in \eqref{integralformula2}. Moreover, we claim that the Euler characteristic integrand is a measurable function.  To see this, note that as explained in \cite[Remark 3.3]{HKK}, the conclusion of Sard's theorem holds for the function $u$ even though it may fail to be $C^3$-smooth. 
Furthermore, $u$ is a proper map and so its regular values form an open set of full measure. 
Thus, for a regular value $t_0$ of $u$, we find that the function $t\mapsto\chi(\Sigma_t)$ is constant for all levels $t$ near $t_0$. It follows that $\chi(\Sigma_t)$ is continuous almost everywhere, and is therefore measurable. 
\end{remark}

We now state a technical lemma which holds in all dimensions. It shows that there are Lipschitz functions $f$ with certain desirable properties. These functions will be used in later sections to construct appropriate auxiliary spacetime second fundamental forms.

\begin{lemma}\label{construct f}
Let $(M^n,\partial_\pm M^n,g)$ be an $n$-dimensional Riemannian band and denote the width by $w=\dist(\partial_-M^n,\partial_+M^n)$. Let $a,b,\varepsilon>0$, and $C>\frac1a\tan\left(\frac{\pi}{2(1+\epsilon)}\right)$ be parameters. If $w= \frac{b\pi}{a(1+\varepsilon)}$, then there exists a function $f_\varepsilon\in\mathrm{Lip}(M^n)$ satisfying the following properties:
\begin{enumerate}
\item $f_\varepsilon$ is an increasing function of the the distance to $\partial_-M^n$,
\item there is a constant $C_1$ depending only on $a,b$ so that almost everywhere
\begin{align} \label{ODE f1}
\begin{split}
   a^2f_\varepsilon^2(x)-b|\nabla f_\varepsilon(x)|+1\ge -C_1\varepsilon,\quad\quad\;& \text{if }\text{ }\big|d(x,\partial_- M)-\frac{w}{2} \big|\leq \frac{w}{4},
    \\ 
    a^2f_\varepsilon^2(x)-b|\nabla f_\varepsilon(x)|+1\ge0,\quad\quad\;&\text{if }\text{ } \big|d(x,\partial_- M)-\frac{w}{2}\big|> \frac{w}{4},
    \end{split}
\end{align}
\item $f_\varepsilon \le -C$ on $\partial_-M^n$ and $f_\varepsilon\ge C$ on $\partial_+M^n$.
\end{enumerate}
\end{lemma}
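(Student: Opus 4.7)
The plan is to set $f_\varepsilon(x) := \phi_\varepsilon(d(x,\partial_- M^n))$ where $\phi_\varepsilon : [0,\infty)\to\mathbb{R}$ is a Lipschitz increasing function, extended by the constant $C$ on $[w,\infty)$. Since $d(\cdot,\partial_- M^n)$ is $1$-Lipschitz with unit gradient almost everywhere, conditions (1) and (3) reduce to monotonicity and the one-dimensional boundary values $\phi_\varepsilon(0)\leq -C$ and $\phi_\varepsilon(w)\geq C$, while (2) becomes the ODE-type inequality
\begin{equation*}
a^2 \phi_\varepsilon(s)^2 - b\,\phi'_\varepsilon(s) + 1 \geq -C_1\varepsilon \ \text{ for } |s-\tfrac{w}{2}|\leq\tfrac{w}{4}, \ \text{ and } \geq 0 \ \text{ otherwise}.
\end{equation*}

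The central observation is that the equality case $b\phi' = 1 + a^2\phi^2$ has rescaled tangent solutions, and such a solution needs width $\tfrac{2b}{a}\arctan(aC) > w$ to evolve from $-C$ to $+C$---a deficit created precisely by the strict lower bound on $C$. To close this deficit while preserving the stronger inequality in the outer region, fix a smooth cutoff $\eta : [0,w] \to [0,1]$ that is symmetric about $w/2$, compactly supported in $(w/4, 3w/4)$, and satisfies $\eta \equiv 1$ on $[3w/8, 5w/8]$. Then take $\phi_\varepsilon$ to be the solution of the perturbed ODE
\begin{equation*}
b\,\phi'_\varepsilon = 1 + a^2\phi_\varepsilon^2 + \delta\,\eta(s),\qquad \phi_\varepsilon(0) = -C,
\end{equation*}
where $\delta>0$ is determined by shooting so that $\phi_\varepsilon(w) = C$. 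Such a $\delta$ exists uniquely because $\phi_\varepsilon(w;0) < C$ (from the tangent computation), $\phi_\varepsilon(w;\delta)$ blows up before $s=w$ for $\delta$ sufficiently large, and $\delta \mapsto \phi_\varepsilon(w;\delta)$ is continuous and strictly increasing while defined. Conditions (1) and (3) are then built in, and since $a^2\phi_\varepsilon^2 - b\,\phi'_\varepsilon + 1 = -\delta\,\eta(s)$, the support of $\eta$ delivers (2) provided we establish the a priori bound $\delta \leq C_1\varepsilon$.

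The hard part---and what I expect to be the main obstacle---is this uniform estimate on $\delta$, since the parameter $C$ is free to grow without bound. The strategy is the angular substitution $\theta(s) := \arctan(a\phi_\varepsilon(s))$, under which the ODE becomes $\theta'(s) = \tfrac{a}{b} + \tfrac{a\delta}{b}\cdot\tfrac{\eta(s)}{1+a^2\phi_\varepsilon^2(s)}$. Integrating from $0$ to $w$ using $\theta(w)-\theta(0)=2\arctan(aC)$ and $\tfrac{aw}{b}=\tfrac{\pi}{1+\varepsilon}$ yields
\begin{equation*}
\delta \int_0^w \frac{\eta(s)\,ds}{1+a^2\phi_\varepsilon^2(s)} \;=\; \tfrac{b}{a}\bigl(2\arctan(aC)-\tfrac{\pi}{1+\varepsilon}\bigr) \;\leq\; \tfrac{b\pi\varepsilon}{a},
\end{equation*}
reducing matters to bounding the integral on the left from below by a multiple of $w/b$ that is uniform in $C$. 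The symmetry of $\eta$ combined with uniqueness of solutions to the ODE forces the anti-symmetry $\phi_\varepsilon(s) = -\phi_\varepsilon(w-s)$, and in particular $\phi_\varepsilon(w/2)=0$. A Gronwall-type comparison with the autonomous ODE $b\psi' = 1 + a^2\psi^2 + \delta$, $\psi(w/2) = 0$, then bounds $|\phi_\varepsilon|$ on a fixed fractional neighborhood of $w/2$ by a constant depending only on $a$, $b$, and an a priori bound on $\delta$; a bootstrap argument closes the loop and produces the desired inequality $\delta \leq C_1(a,b)\varepsilon$.
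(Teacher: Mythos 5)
Your proposal is correct in spirit but takes a genuinely different route from the paper. The paper's proof is a short explicit construction: it writes $\tilde f_\varepsilon(\tau)=\tfrac1a\tan\bigl(\tfrac ab\phi_\varepsilon(\tau-\tfrac w2)\bigr)$ where $\phi_\varepsilon$ is the piecewise-linear reparametrization which equals $(1+10\varepsilon)\tau$ on $|\tau|\le c$ and $\tau\pm 10\varepsilon c$ outside, with $c=\tfrac{b}{10\varepsilon a}\bigl(\arctan(aC)-\tfrac{wa}{2b}\bigr)<\tfrac{w}{20}$. Since $a^2\tilde f_\varepsilon^2-b\tilde f_\varepsilon'+1=\sec^2(\cdot)\,(1-\phi_\varepsilon')$, the ODE defect is exactly $-10\varepsilon\sec^2(\cdot)$ on the compressed region and $0$ outside, and the argument of $\sec^2$ is uniformly bounded away from $\pi/2$, so the constant $C_1$ falls out by inspection. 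You instead pose the perturbed ODE $b\phi_\varepsilon'=1+a^2\phi_\varepsilon^2+\delta\eta$, determine $\delta$ by shooting to $C$, and then estimate $\delta$ via the angular variable; this is a valid alternative and it gives a cleaner conceptual picture of why the defect is $O(\varepsilon)$ (it directly quantifies the surplus $2\arctan(aC)-\tfrac{\pi}{1+\varepsilon}$), at the cost of needing existence/uniqueness, continuous dependence, a comparison principle, and the closing bootstrap, whereas the paper's version is a one-line verification.

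One remark that would tighten your last paragraph: the a priori bound on $\delta$, which you treat as the delicate ingredient to be obtained by bootstrap, actually comes for free from the requirement that the shooting solution not blow up on $[0,w]$. Since $\phi_\varepsilon(w/2)=0$ and $\eta\equiv1$ on $[w/2,5w/8]$, integrating $\theta'=\tfrac ab(1+\delta\cos^2\theta)$ from $w/2$ shows $\theta$ would reach $\pi/2$ before $5w/8$ unless $\sqrt{1+\delta}<4(1+\varepsilon)$; thus any admissible shooting parameter satisfies $\delta<16(1+\varepsilon)^2-1$ without reference to $C$. With that uniform bound in hand, your Gronwall comparison gives a lower bound on $\int_{3w/8}^{5w/8}(1+a^2\phi_\varepsilon^2)^{-1}\,ds$ that is a fixed multiple of $w$, and the inequality $\delta\int_0^w\eta(1+a^2\phi_\varepsilon^2)^{-1}\,ds\le\tfrac{b\pi\varepsilon}{a}$ then yields $\delta\le C_1\varepsilon$ directly, no bootstrap needed.
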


\begin{proof}
Consider the $1$-variable function
\begin{align}
\phi_\varepsilon(\tau)=
    \begin{cases}
    (1+10\varepsilon)\tau&\text{ if }|\tau|\le c,\\
    \tau+\frac \tau{|\tau|}10\varepsilon c&\text{ if }|\tau|\geq c,
    \end{cases}
\end{align}
where
\begin{align}
   0< c=
\frac{b}{10\varepsilon a}\left(\tan^{-1}(aC)-\frac{wa}{2b}\right)< \frac w{20}.
\end{align}
Next define 
\begin{align}
    \tilde f_\epsilon(\tau)=
\begin{cases}
    \frac1a\tan\left(\frac ab\phi_{\varepsilon }
    \left(\tau-\frac w2\right)\right)&\text{if }\tau\le w ,\\
    L(\tau)&\text{if }\tau>w,
\end{cases}
\end{align}
where $L$ is the unique linear function making $\tilde f_\epsilon(\tau)$ differentiable at $\tau=w$.
The desired function may then be obtained by setting $f_\epsilon(x)=\tilde f_\epsilon(r(x))$, where
$r(x)=d(x,\partial_-M^n)$.
\end{proof}

\subsection{Nonorientable manifolds}\label{SS:non-oriented}
Throughout this work we assume that all manifolds are orientable. Although many of the results presented hold as stated in the nonorientable case by passage to the orientable double cover, some theorems require minor modifications which we address here. This later collection consists of Theorem \ref{ToricBand}, and Main Theorems \ref{HopfSphereBoundary} and \ref{t:2RicIncomplete}. Before describing those modifications, it is worth noting that nonorientable $3$-manifolds never satisfy the homological hypotheses of Theorem \ref{Llarull} and \ref{LlarullBand}, since their rank $2$ integral homology groups always contain a $\mathbb{Z}_2$ summand, a fact which can be derived from the universal coefficients theorem. Lastly, due to their specialized nature, we refrain from asserting nonorientable versions of Corollary \ref{c:linkCor}, and Theorems \ref{thm:waist} and \ref{thm:dice}.

We now briefly describe the changes required for the three results mentioned above in the nonorientable case. In this setting the assumptions of Theorem \ref{ToricBand}, Main Theorem \ref{HopfSphereBoundary}, and Main Theorem \ref{t:2RicIncomplete} must be strengthened by imposing that there are no \emph{immersed spherical classes}, which is to say there are no spherical classes in the orientable double cover.
In addition, the statement of Theorem \ref{ToricBand} should be adjusted further to account for the fact that a nonorientable band achieving equality in \eqref{eq:widthinequality} will split as a warped product with a Klein bottle instead of a torus.

\section{Spacetime Harmonic Functions and Ricci Curvature} \label{S:Ricci}

In this section we analyze the interactions between spacetime harmonic functions and Ricci curvature. 
As a precursor which illustrates the main approach, we give an elementary proof of Frankel's Theorem concerning minimal surfaces in manifolds with nonnegative Ricci curvature.

\subsection{Minimal surfaces and nonnegative Ricci curvature}


\begin{theorem}\label{T:Frankel}
Let $(M^n,g)$, $n\ge2$ be a compact Riemannian manifold with $\Ric\ge0$.
Suppose that its boundary $\partial M^n$ is minimal and consists of at least two components.
Then $M^n=I\times \Sigma^{n-1}$ for an interval $I$ and a hypersurface $\Sigma^{n-1}$, and the metric splits as a product $g=dt^2+g_{\Sigma^{n-1}}$.
\end{theorem}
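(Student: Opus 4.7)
The plan is to promote the boundary decomposition to a Riemannian band structure and use the simplest possible spacetime harmonic function. Since $\partial M^n$ has at least two components, partition them into nonempty collections $\partial_\pm M^n$. Apply Proposition \ref{p:existence} with $f\equiv 0$ and distinct constants $c_-<c_+$ to obtain a solution $u\in C^{2,\alpha}(M^n)$; because $f\equiv0$, the spacetime harmonic equation reduces to $\Delta u=0$, so $u$ is simply a classical harmonic function with $u=c_\pm$ on $\partial_\pm M^n$.

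The key identity I will exploit is Bochner's formula
\begin{equation*}
\tfrac12\Delta|\nabla u|^2 \;=\; |\nabla^2 u|^2 + \Ric(\nabla u,\nabla u).
\end{equation*}
Integrating over $M^n$ and applying the divergence theorem yields
\begin{equation*}
\int_{\partial M^n}\nabla^2 u(\nu,\nabla u)\,dA \;=\; \int_{M^n}\bigl(|\nabla^2 u|^2+\Ric(\nabla u,\nabla u)\bigr)\,dV.
\end{equation*}
Because $u$ is constant on each boundary component, $\nabla u$ is normal there; combining this with the decomposition $\Delta u=\Delta_{\partial M}u + H\,\partial_\nu u + \nabla^2 u(\nu,\nu)$ and $\Delta u\equiv 0$ produces $\nabla^2 u(\nu,\nu)=-H\,\partial_\nu u$, so the boundary integral collapses to $-\int_{\partial M^n}H|\nabla u|^2\,dA$. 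The minimality hypothesis $H\equiv 0$ kills this term, while the assumption $\Ric\geq 0$ makes both integrands on the right nonnegative. Hence both vanish identically, giving $\nabla^2 u\equiv 0$ and $\Ric(\nabla u,\nabla u)\equiv 0$.

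From $\nabla^2 u\equiv 0$ it follows that $|\nabla u|^2$ is constant. This constant cannot be zero, since $c_-\neq c_+$ forces $u$ to be nonconstant. Therefore $\xi:=\nabla u/|\nabla u|$ is a globally defined parallel unit vector field on $M^n$, and the level sets of $u$ are totally geodesic hypersurfaces. Integrating the flow of $\xi$ (or invoking de Rham-type reasoning) produces the desired isometric splitting $(M^n,g)\cong (I\times\Sigma^{n-1},\,dt^2+g_{\Sigma^{n-1}})$, where $\Sigma^{n-1}$ is any level set of $u$ and $I=[c_-,c_+]/|\nabla u|$.

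The main point to watch is that the paper's integral formula (Lemma \ref{integralformula}) is stated only in dimension $3$ and carries Euler characteristic correction terms, whereas Frankel's theorem is claimed in every dimension $n\geq 2$. This is not a genuine obstacle here: setting $f\equiv 0$ makes the auxiliary second fundamental form $k$ vanish, so no topological correction is needed and the dimension-agnostic Bochner identity suffices. The only other routine check is that the boundary regularity of $u$ is high enough to justify the integration by parts, which is standard for the Dirichlet problem on a smooth domain.
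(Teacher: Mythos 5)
Your proposal is correct and follows essentially the same route as the paper's proof: a harmonic function with constant Dirichlet data on the two boundary collections, the Bochner formula integrated over $M^n$, the boundary term reduced to $-\int_{\partial M^n}H|\nabla u|^2\,dA$ via constancy of $u$ on each component, and minimality plus $\Ric\geq0$ forcing $\nabla^2 u\equiv0$, whence the isometric splitting. The only difference is cosmetic — you spell out the identification of the boundary integrand and the final parallel-vector-field argument a bit more explicitly than the paper does.
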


\begin{proof}
Group the components of $\partial M^n$ into two nonempty disjoint collections $\partial_-M^n$ and $\partial_+M^n$ with $\partial_-M^n\cup\partial_+M^n=\partial M^n$. 
Let $u$ be the harmonic function on $M^n$ with $u=\pm 1$ on $\partial_\pm M^n$.
Then we have by Bochner's formula and the minimality of $\partial M^n_\pm$ that
\begin{align}
\begin{split}
    0\le&\int_{M^n}(|\nabla^2u|^2+ \Ric(\nabla u,\nabla u))dV
    \\=&\int_{M^n}\frac12\Delta|\nabla u|^2dV\\
    =&\int_{\partial M^n}|\nabla u|\partial_{\textbf n}|\nabla u|dA
    \\=&-\int_{\partial M^n}H|\nabla u|^2\\
    =&0,
    \end{split}
\end{align}
where $\textbf n$ is the unit outer normal to $\partial M^n$ and $H$ is the mean curvature with respect to $\textbf n$.
Hence $\nabla^2u=0$ on $M^n$, which implies that $M^n$ splits as a product.
\end{proof}

As a consequence to Theorem \ref{T:Frankel}, we obtain the following result due to Frankel \cite{Frankel}.

\begin{corollary}\label{Frankel Cor}
Any two smooth minimal hypersurfaces $\Sigma_1^{n-1},\Sigma_2^{n-1}$ in a closed Riemannian manifold $(M^n,g)$, $n\geq 2$ with $\Ric>0$ must intersect.
\end{corollary}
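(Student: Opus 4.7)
The plan is to argue by contradiction using Theorem \ref{T:Frankel}. Suppose $\Sigma_1^{n-1}\cap\Sigma_2^{n-1}=\emptyset$. Since $M^n$ is closed and the hypersurfaces are compact and disjoint, the distance $L:=d(\Sigma_1,\Sigma_2)$ is strictly positive and is realized by a unit-speed geodesic $\gamma:[0,L]\to M^n$ with $\gamma(0)\in\Sigma_1$ and $\gamma(L)\in\Sigma_2$, meeting each $\Sigma_i$ orthogonally and satisfying $\gamma((0,L))\cap(\Sigma_1\cup\Sigma_2)=\emptyset$ by minimality.

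Next, I would form the compact Riemannian manifold $\hat{M}^n$ obtained by cutting $M^n$ open along $\Sigma_1\cup\Sigma_2$. The boundary $\partial\hat{M}^n$ is a disjoint union of smooth closed hypersurfaces, each of which naturally projects to either $\Sigma_1$ or $\Sigma_2$ under the collapse map and therefore inherits zero mean curvature, since minimality is a local invariant. The geodesic $\gamma$ lifts canonically to a geodesic $\hat\gamma$ in $\hat{M}^n$ whose two endpoints land on distinct components of $\partial\hat{M}^n$: one coming from $\Sigma_1$ and one from $\Sigma_2$. Let $\hat{N}$ be the connected component of $\hat{M}^n$ containing $\hat\gamma$. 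Then $\hat{N}$ is a compact Riemannian manifold with minimal boundary whose boundary has at least two components.

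Applying Theorem \ref{T:Frankel} to $\hat{N}$ yields an isometric product decomposition $\hat{N}\cong I\times\Sigma^{n-1}$ with $g_{\hat N}=dt^2+g_\Sigma$. This forces $\mathrm{Ric}(\partial_t,\partial_t)\equiv 0$ throughout $\hat{N}$. Since the Riemannian structure of $\hat{N}$ coincides with that of an open subset of $M^n$ along the interior of $\hat\gamma$, this directly contradicts the strict positivity $\mathrm{Ric}>0$ on $M^n$. The only delicate point is the cutting construction — making sure that $\hat{M}^n$ really is a smooth compact manifold with minimal boundary when some $\Sigma_i$ may fail to be two-sided or separating — but the existence of the minimizing geodesic $\gamma$ then automatically singles out a component of $\hat{M}^n$ with boundary pieces coming from both $\Sigma_1$ and $\Sigma_2$, which is all that Theorem \ref{T:Frankel} requires.
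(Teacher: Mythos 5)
Your proof is correct and follows essentially the same strategy as the paper: cut $M^n$ open along $\Sigma_1\cup\Sigma_2$, apply Theorem \ref{T:Frankel} to a component with minimal boundary having at least two components, and contradict $\mathrm{Ric}>0$. The one small difference is that the paper identifies such a component by a two-step topological case analysis (remove $\Sigma_1$, then $\Sigma_2$, and argue about where boundary components land), whereas you locate it more directly via a minimizing geodesic from $\Sigma_1$ to $\Sigma_2$, whose lift necessarily joins two distinct boundary components; this is a clean shortcut but not a fundamentally different route.
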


\begin{proof}
Proceeding by contradiction, suppose that there are two connected and disjoint smooth minimal hypersurfaces $\Sigma_1^{n-1}$ and $\Sigma_2^{n-1}$. Let $M_1^n$ be the component of $M^n\setminus \Sigma_1^{n-1}$ which contains $\Sigma_2^{n-1}$. Next, let $M_2^n$ be the metric completion of the component from $M_1^{n}\setminus\Sigma_2^{n-1}$ which contains at least two boundary components. Since its boundary is minimal, we may apply Theorem \ref{T:Frankel} to find that $M_2^n$ splits as a product. In particular, the Ricci curvature of $M_2^n$ vanishes in at least one direction, contradicting the assumption $\Ric>0$.
\end{proof}

\subsection{Ricci curvature and asymptotically flat manifolds}\label{SS:RicciAF}

Next, we study the Ricci curvature of asymptotically flat manifolds. We say that a complete Riemannian manifold $(M^n,g)$ is \textit{asymptotically flat} of order $q>0$, if there exists a compact set $\Omega\subset M^n$ and a diffeomorphism $\psi:M^n\setminus\Omega\to \sqcup_{i=1}^k\left(\R^n\setminus B\right)$ for a whole number $k$ and ball $B\subset \mathbb{R}^n$, such that $(\psi^{-1})^*g-\delta=O_2(\rho^{-q})$ as $\rho\rightarrow\infty$. Here $\rho$ is the radial coordinate of $\mathbb{R}^n$, and $\delta$ denotes the standard flat metric, while the subindex 2 indicates additional decay for derivatives in the usual manner. 

\begin{proof}[Proof of Theorem \ref{t:AFricci}]
First we reduce the theorem to the case in which $M^n$ has a single end. If $(M^n,g)$ has two or more ends, then one may produce a line in $M^n$ traversing two of its ends and apply the Cheeger-Gromoll Splitting Theorem \cite{CheegerGromoll} to show that $(M^n,g)$ splits isometrically as a product with a compact manifold. This contradicts the asymptotically flat condition and so we may assume $M^n$ has a single end.

In dimension 2 this result follows from the Gauss-Bonnet formula. More precisely, when applied to the interior of a large coordinate circle, we find that its Euler characteristic is no less than 1, since the total geodesic curvature of its boundary curve is $2\pi+ O(\rho^{-q})$. Then, using connectivity, the Euler characteristic must be exactly 1. It follows that the total Gaussian curvature is zero, and hence the manifold is flat, which yields the desired conclusion.

Now assume that $n\geq 3$. Fix a point $p\in M^n$ and let $v>0$ be the corresponding Green's function, which is to say $\Delta v=(2-n)\omega_{n-1}\delta_p$ and $v\to0$ as $\rho\rightarrow \infty$, where $\omega_{n-1}$ is the volume of the unit sphere $S^{n-1}\subset\R^n$.
Since $M^n$ is asymptotically flat, it is well-known that $v=\rho^{2-n}+O_{2}(\rho^{2-n-q})$, see for instance \cite[Corollary A.38]{Lee}.  
Define the function $u=v^{\frac1{2-n}}$.
Away from the point $p$, the function $u$ solves 
\begin{align}
    \Delta u=(n-1)|\nabla u|^2u^{-1}.
\end{align} 
Moreover, we have by Bochner's formula
\begin{align}\label{eq:3.1}
\begin{split}
   & \frac 12\Delta((|\nabla u|^2-1)u^{2-n})\\
   =&\frac12u^{2-n}\Delta|\nabla u|^2
   -\frac12(|\nabla u|^2-1) \Delta v
   -2(n-2)u^{1-n}|\nabla u|^2\nabla_{\nu\nu}u \\
   =&u^{2-n}\Ric(\nabla u,\nabla u)+u^{2-n}|\nabla^2u|^2+u^{2-n}\langle \nabla \Delta u,\nabla u\rangle-2(n-2)\nabla_{\nu\nu}u|\nabla u|^2u^{1-n}.
   \end{split}
\end{align}
Furthermore
\begin{align}\label{eq:3.2}
  \langle \nabla \Delta u,\nabla u\rangle=2(n-1)|\nabla u|^2u^{-1}\nabla_{\nu\nu}u-(n-1)|\nabla u|^4u^{-2},
\end{align}
and
\begin{align}\label{eq:3.3}
\begin{split}
    &|\nabla^2u-|\nabla u|^2u^{-1}g+u^{-1}\nabla u\otimes\nabla u|^2\\
    =&
    |\nabla^2u|^2+n|\nabla u|^4u^{-2}+|\nabla u|^4u^{-2}-2|\nabla u|^2u^{-1}\Delta u+2|\nabla u|^2u^{-1}\nabla_{\nu\nu}u-2|\nabla u|^4u^{-2}
    \\
    =&|\nabla^2u|^2
    +2|\nabla u|^2u^{-1}\nabla_{\nu\nu}u
    -(n-1)|\nabla u|^4u^{-2}.
    \end{split}
\end{align}

Let $\varepsilon>0$ and consider the domain $M_{\varepsilon}^{n}$ bounded between a geodesic sphere $S_{\varepsilon}^{n-1}$ of radius $\varepsilon$ around $p$, and a coordinate sphere $\mathcal{S}_{1/\varepsilon}^{n-1}$ of radius $\frac{1}{\varepsilon}$ in the asymptotic end. Then integrating \eqref{eq:3.1} by parts and applying \eqref{eq:3.2} and \eqref{eq:3.3} produces
\begin{align}
\begin{split}
    &\int_{M_{\varepsilon }^n}u^{2-n}\left(\big|\nabla^2u-|\nabla u|^2u^{-1}g+u^{-1}\nabla u\otimes\nabla u \big|^2+\Ric(\nabla u,\nabla u)\right)dV\\
    =&\int_{M_{\varepsilon}^n}\frac 12\Delta((|\nabla u|^2-1)u^{2-n})dV
    \\
    =&\int_{S_\varepsilon^{n-1}}\left( u^{2-n}|\nabla u|\partial_{\mathbf n}|\nabla u|-\frac{n-2}2u^{1-n}(|\nabla u|^2-1) \partial_{\mathbf n}u    \right)dA\\
   & +\int_{\mathcal{S}_{1/\varepsilon}^{n-1}}\left( u^{2-n}|\nabla u|\partial_{\mathbf n}|\nabla u|-\frac{n-2}2u^{1-n}(|\nabla u|^2-1) \partial_{\mathbf n}u    \right)dA.
   \end{split}
\end{align}
where $\mathbf n$ denotes the unit outer normal to $M_{\varepsilon}^n$.
Observe that $u=\rho+o_2(\rho)$ as $\rho\rightarrow\infty$ from the discussion above, and $u=r+ o_2(r)$ as $r\rightarrow 0$ where $r(x)=d(x,p)$ is the distance to $p$, see for instance  \cite[Theorem 2.4]{MRS}. 
Taking $\varepsilon\to0$ we find that
\begin{align} \label{hessian u}
    \int_{M^n}u^{2-n}\left(\big|\nabla^2u-|\nabla u|^2u^{-1}g+u^{-1}\nabla u\otimes\nabla u \big|^2+\Ric(\nabla u,\nabla u)\right)dV=0.
\end{align}

The nonnegative Ricci curvature assumption allows us to conclude that
\begin{equation}\label{jiihoi}
\nabla^2u-|\nabla u|^2u^{-1}g+u^{-1}\nabla u\otimes\nabla u =0
\end{equation}
away from the point $p$, which in turn implies that $\nabla^2u^2=2|\nabla u|^2g$. 
In particular, $\nabla |\nabla u|=0$ which according to the asymptotics shows that $|\nabla u|\equiv 1$. The manifold then splits topologically 
$M^n \setminus \{p\}=(0,\infty)\times \Sigma^{n-1}$, and the metric may be decomposed as
\begin{equation}
g=du^2+g_{u},
\end{equation}
where $g_{u}$ is the induced metric on level sets $\Sigma_u$.
Using \eqref{jiihoi}, the second fundamental form $II_u$ of these level sets is given by
\begin{equation}
    \frac{1}{2}\partial_u g_{u}=II_u=\frac{\nabla^2u|_{\Sigma_u}}{|\nabla u|}=u^{-1}g_{u}.
\end{equation}
It follows that $g_{u}=u^2g_{\Sigma}$ for some metric $g_{\Sigma}$ on $\Sigma^{n-1}$.
The structure of the metric indicates that $u(x)=r(x)$ is the distance function from $p$. It follows that $(\Sigma^{n-1},g_{\Sigma})$ is isometric to the round unit sphere, yielding the desired result.
\end{proof}





\subsection{A Bonnet-Myers Theorem with boundary}\label{SS:RicciIncomplete}

The proof of Theorem \ref{Meyer boundary} relies on the following fundamental integral formula for spacetime harmonic functions.

\begin{lemma}\label{L:Meyer integral formula}
Let $(M^n,\partial_\pm M^n,g)$, $n\geq 3$ be a Riemannian band and let $f\in\mathrm{Lip}(M^n)$.
Suppose that $u$ is a spacetime harmonic function solving the boundary value problem \eqref{e:bandspacetimeharmoniceq}.
Then 
\begin{align}\label{eq:Meyer integral formula}
\begin{split}
        &\int_{\partial_- M^n}|\nabla u|^{2-\frac{n}{n-1}}(n(n-2)f-H)dA
     -\int_{\partial_+M^n} |\nabla u|^{2-\frac{n}{n-1}}(n(n-2)f+H)dA\\
    \ge&
        \int_{M^n} 
       |\nabla u|^{-\frac n{n-1}} \left(
        |\overline \nabla^2u|^2-\frac{n}{n-1}|\nabla |\nabla u|+f\nabla u|^2
        \right)dV\\
&+
\int_{M^n}|\nabla u|^{-\frac n{n-1}}\left(\frac{n^2(n-2)^2}{n-1}f^2|\nabla u|^2+\Ric(\nabla u,\nabla u)
    -n(n-2)|\nabla u|\langle \nabla f,\nabla u\rangle
    \right)dV.
 \end{split}
\end{align}

\end{lemma}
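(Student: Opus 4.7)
The strategy is to exhibit the bulk integrand on the right-hand side of \eqref{eq:Meyer integral formula} as the divergence of an explicit vector field on $M^n\setminus\{\nabla u=0\}$, and then apply the divergence theorem. The inequality (rather than equality) will arise from a one-sided boundary correction along the critical set of $u$.

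Setting $p:=(n-2)/(n-1)$, so that $p-2=-n/(n-1)$, I would introduce the vector field
$$V := \tfrac{n-1}{n-2}\,\nabla |\nabla u|^{p} \;-\; n(n-3)\,f\,|\nabla u|^{p-1}\nabla u$$
on $M^n\setminus\{\nabla u=0\}$; for $n=3$ the second term drops out and $V=2\nabla|\nabla u|^{1/2}$. The specific coefficients are forced by requiring that $\langle V,\mathbf{n}\rangle$ reproduce the boundary integrand on $\partial_\pm M^n$. The main task is to verify that $\mathrm{div}(V)$ coincides pointwise with the bulk integrand. Starting from Bochner's identity and substituting $\Delta u=-nf|\nabla u|$ to expand $\langle\nabla\Delta u,\nabla u\rangle$, I would multiply through by $|\nabla u|^{p-2}$ and repackage the result into $\Delta|\nabla u|^p$. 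The algebraic identities
$$|\nabla^2 u|^2 = |\overline\nabla^2 u|^2 + nf^2|\nabla u|^2, \qquad |\nabla|\nabla u|+f\nabla u|^2 = |\nabla|\nabla u||^2 + \tfrac{2f}{|\nabla u|}\nabla^2 u(\nabla u,\nabla u) + f^2|\nabla u|^2$$
allow two of the terms to be repackaged, while the residual term $f|\nabla u|^{p-3}\nabla^2 u(\nabla u,\nabla u)$ is absorbed by adding the divergence $\mathrm{div}(f|\nabla u|^{p-1}\nabla u)$ with coefficient $-n(n-3)$, using the expansion
$$\mathrm{div}(f|\nabla u|^{p-1}\nabla u) = |\nabla u|^{p-1}\langle\nabla f,\nabla u\rangle + (p-1)f|\nabla u|^{p-3}\nabla^2 u(\nabla u,\nabla u) - nf^2|\nabla u|^p.$$
Bookkeeping the coefficients (the arithmetic identity $1+(n-3)(n-1)=(n-2)^2$ yields the $f^2|\nabla u|^p$ coefficient $\tfrac{n^2(n-2)^2}{n-1}$) produces the claimed divergence identity.

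For the boundary flux, the linear uniformly elliptic equation $\Delta u+nf|\nabla u|=0$ admits a maximum principle, forcing $\nabla u=\mp|\nabla u|\mathbf{n}$ on $\partial_\mp M^n$. Since $u|_{\partial_\pm M^n}$ is constant, $\Delta_{\partial M^n}u=0$ and so $(\nabla^2 u)(\mathbf{n},\mathbf{n})=\Delta u-H\partial_{\mathbf{n}}u$. This gives $\partial_{\mathbf{n}}|\nabla u|=(nf-H)|\nabla u|$ on $\partial_-M^n$ and $\partial_{\mathbf{n}}|\nabla u|=-(nf+H)|\nabla u|$ on $\partial_+M^n$. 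Combined with the identity $\tfrac{n-1}{n-2}p=1$, a short calculation yields $\langle V,\mathbf{n}\rangle = \pm[n(n-2)f\mp H]|\nabla u|^p$ on $\partial_\pm M^n$, matching the LHS of \eqref{eq:Meyer integral formula}.

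Finally, to pass from the pointwise identity on $M^n\setminus\{\nabla u=0\}$ to the integral inequality, I would exhaust the domain by $M^n\setminus N_\varepsilon$, where $N_\varepsilon$ is a tubular $\varepsilon$-neighborhood of the critical set, and show that the resulting $\partial N_\varepsilon$ contribution has a favorable sign in the limit. The dominant term $\tfrac{n-1}{n-2}\nabla|\nabla u|^p$ of $V$ points out of $N_\varepsilon$, since $|\nabla u|^p$ attains its minimum value zero on $\{\nabla u=0\}$; the lower-order term has modulus $O(|\nabla u|^p)$ and contributes nothing in the limit on account of the Hausdorff codimension-$2$ bound on $\{\nabla u=0\}$ from \cite[Theorem 1.1]{NaberV} together with the hypothesis $n\geq 3$, which ensures $p>0$. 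The main obstacle is this final limiting argument: one must verify integrability of the bulk integrand near $\{\nabla u=0\}$ and rigorously extract the favorable sign of the singular correction, which is precisely what promotes the formal equality on the regular set into the stated inequality $\geq$.
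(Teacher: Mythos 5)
Your algebraic core matches the paper's. The vector field you write down is, up to the overall factor $\tfrac{n-1}{n-2}$, precisely what the paper integrates: setting $\alpha=\tfrac{n-2}{n-1}$, the paper works with (a regularized form of) $\nabla|\nabla u|^\alpha + (n-3)\alpha\,\Delta u\,|\nabla u|^{\alpha-2}\nabla u$, which after substituting $\Delta u=-nf|\nabla u|$ becomes $\nabla|\nabla u|^\alpha - n(n-3)\alpha f|\nabla u|^{\alpha-1}\nabla u = \tfrac{n-2}{n-1}V$. The interpolation parameter $\lambda=3-n$ the paper uses to cancel $\langle\nabla|\nabla u|,\nabla u\rangle$ terms is implicit in your coefficient $n(n-3)$. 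Your boundary flux computation is correct and gives exactly the left-hand side of \eqref{eq:Meyer integral formula}.

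The genuine difference is in the handling of the critical set $\{\nabla u=0\}$, and that is where the gap lies. The paper never works with $|\nabla u|$ raised to negative powers near the critical set: it replaces $|\nabla u|$ by $\psi_\varepsilon=\sqrt{|\nabla u|^2+\varepsilon^2}$, so $\Delta\psi_\varepsilon^\alpha$ is globally smooth and the divergence theorem applies to all of $M^n$ with no interior boundary. The only prices paid are an $O(\varepsilon^2)$ correction to the pointwise identity, which inequality \eqref{eq:lemma3.4proof} controls from below by a term of size $\varepsilon^{(n-2)/(n-1)}\!\int f^2$, and standard Fatou/dominated-convergence bookkeeping when $\varepsilon\to0$. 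You instead excise $N_\varepsilon$ and must show $\liminf_\varepsilon\int_{\partial N_\varepsilon}\langle V,\mathbf n\rangle\leq 0$, where $\mathbf n$ is outward to $M^n\setminus N_\varepsilon$. For that argument to work you need $N_\varepsilon=\{|\nabla u|<\varepsilon\}$ (not a geodesic tube), so the first term of $V$ has the claimed sign. But the lower-order term contributes flux of modulus $O(\varepsilon^p|\partial N_\varepsilon|)$ with $p=\tfrac{n-2}{n-1}<1$, and the Hausdorff codimension-two bound on $\{\nabla u=0\}$ does not by itself make this vanish; you need the quantitative Minkowski content estimate $\mathrm{Vol}(\{|\nabla u|<\varepsilon\})\lesssim\varepsilon^2$ of Naber--Valtorta, combined with coarea and a Chebyshev selection of good radii, to extract a subsequence along which $\varepsilon_k^p|\{|\nabla u|=\varepsilon_k\}|\to0$. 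You would also need to verify that $\{|\nabla u|=\varepsilon\}$ is rectifiable for a.e.\ $\varepsilon$ (true, since $|\nabla u|$ is Lipschitz) and justify the passage from $\int_{M^n\setminus N_\varepsilon}\mathrm{div}(V)$ to $\int_{M^n}$(bulk); as you note, the Hessian combination is a priori only in $[0,\infty]$, and it is the nonnegativity exhibited in Remark~\ref{Rm:Meyer integral formula} that allows Fatou to apply. None of this is false, but your sketch leaves it all to be supplied, whereas the $\psi_\varepsilon$-regularization makes the whole issue evaporate. The honest assessment is that you have correctly reverse-engineered the algebra but chosen a technically heavier regularization; the paper's approach buys you a global pointwise identity at the cost of tracking $\varepsilon^2$ errors, which turn out to be easy to estimate.
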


\begin{remark}\label{Rm:Meyer integral formula}
Let $p\in M^n$ be such that $\nabla u(p)\ne0$, and
let $\{e_i\}_{i=1}^n$ be an orthonormal frame at $p$ with $e_n=\nu=\tfrac{\nabla u}{|\nabla u|}$.
Then at this point
\begin{align}
\begin{split}
   &|\overline \nabla^2u|^2-\frac{n}{n-1}|\nabla |\nabla u|+f\nabla u|^2
   \\ \ge
   &\sum_{i=1}^{n-1}|\overline{\nabla}_{ii} u|^2-\frac{1}{n-1}(\overline{\nabla}_{\nu\nu}u)^2
   +\frac{n-2}{n-1}\sum_{i=1}^n\sum_{j=i+1}^n|\overline\nabla_{ij}u|^2\\
   =&\sum_{i=1}^{n-1}|\overline{\nabla}_{ii} u|^2-\frac{1}{n-1}\left(\sum_{i=1}^{n-1}\overline{\nabla}_{ii} u\right)^2 
    +\frac{n-2}{n-1}\sum_{i=1}^n\sum_{j=i+1}^n|\overline\nabla_{ij}u|^2\\
       =&
       \frac{1}{2n-2}\sum_{i\ne j\le n-1} (\nabla_{ii}u-\nabla_{jj}u)^2
     +\frac{n-2}{2n-2}\sum_{i\ne j}|\nabla_{ij}u|^2.
             \end{split}
\end{align}
If $\nabla u=0$ and $|\nabla u|$ is differentiable (which holds almost everywhere), then $|\nabla |\nabla u|+f\nabla u|^2=0$. Thus, the second line of \eqref{eq:Meyer integral formula} is nonnegative.
\end{remark}

\begin{proof}
Proposition \ref{p:existence} implies that $u\in C^{2,\beta}(M^n)$ for some $\beta\in(0,1)$ so that $|\nabla u|$ is Lipschitz. 
Consequently, $|\nabla u|$ is differentiable outside a set of measure zero by Rademacher's Theorem, and hence $|\nabla u|$ is $W^{1,p}(M^n)$ for any $p>1$. 
It follows that $u\in W^{3,p}(M^n)$ by elliptic regularity.

Set $\alpha=\frac{n-2}{n-1}$ and for $\varepsilon>0$ consider $\psi_\varepsilon=\sqrt{|\nabla u|^2+\varepsilon^2}$. With the help of
of Bochner's formula we compute
\begin{align}
\begin{split}
    \Delta\psi_\varepsilon^\alpha=&    \frac{\alpha}{2}\psi_\varepsilon^{\alpha-2}\Delta \psi_\varepsilon^2
    +\frac{\alpha}{2}\left(\frac{\alpha}{2}-1\right)\psi_\varepsilon^{\alpha-4}|\nabla \psi_\varepsilon^2|^2
    \\=&
    \alpha \psi_\varepsilon^{\alpha-2}\left(|\nabla^2 u|^2+\Ric(\nabla u,\nabla u)+\langle \nabla u,\nabla \Delta u \rangle\right)
    +\frac{\alpha}{2}\left(\frac{\alpha}{2}-1\right)\psi_\varepsilon^{\alpha-4}|\nabla |\nabla u|^2|^2
    \label{Lp gradient u}
    \\ = &
    \alpha\psi_\varepsilon^{\alpha-2}(|\nabla^2 u|^2+\Ric(\nabla u,\nabla u)
    +\langle \nabla u,\nabla \Delta u \rangle+(\alpha-2)\psi_\varepsilon^{-2}|\nabla |\nabla u||^2|\nabla u|^2).
    \end{split}
    \end{align}
Using the spacetime Hessian identity
    \begin{align}
    \begin{split}
    |\overline \nabla^2u|^2
    :=|\nabla^2u+f|\nabla u|g|^2
    =|\nabla^2u|^2+nf^2|\nabla u|^2+2f|\nabla u|\Delta u
    =|\nabla^2u|^2-nf^2|\nabla u|^2,
    \end{split}
    \end{align}
as well as
\begin{equation}
    |\nabla|\nabla u||^2=\big|\nabla|\nabla u|+f\nabla u\big|^2
    -f^2|\nabla u|^2-2f\langle \nabla |\nabla u|,\nabla u\rangle,
\end{equation}
produces
\begin{align}\label{eq:L3.3proof}
\begin{split}
    \alpha^{-1}\Delta\psi_\varepsilon^\alpha=& \psi_\varepsilon^{\alpha-2} 
    \left(
    |\bar{\nabla}^2 u|^2
    +nf^2|\nabla u|^2
    +\Ric(\nabla u,\nabla u)
     +\langle \nabla u,\nabla \Delta u \rangle
    \right)\\
    &+  (\alpha-2)\psi_\varepsilon^{\alpha-4}
   |\nabla u|^2
    \left(\big|\nabla|\nabla u|+f\nabla u\big|^2
    -f^2|\nabla u|^2-2f\langle \nabla |\nabla u|,\nabla u\rangle\right)
   .
   \end{split}
\end{align}
To analyze the last term of the first line in \eqref{eq:L3.3proof}, we calculate this expression in two different ways, namely
\begin{align}\label{eq:L3.3proof2}
\begin{split}
    \psi_\varepsilon^{\alpha-2}\langle \nabla u,\nabla \Delta u\rangle
    =&
    \textnormal{div}(\Delta u\cdot \psi_\varepsilon^{\alpha-2}\nabla u)
    -(\Delta u)^2\psi_\varepsilon^{\alpha-2}
    -\Delta u\langle \nabla\psi_\varepsilon^{\alpha-2},\nabla u\rangle
    \\=&
    \textnormal{div}(\Delta u\cdot \psi_\varepsilon^{\alpha-2}\nabla u)
    -n^2f^2 |\nabla u|^{2}\psi_\varepsilon^{\alpha-2}+
    n(\alpha-2)f|\nabla u|^2\psi_\varepsilon^{\alpha-4}\langle \nabla|\nabla u|,\nabla u\rangle
    \end{split}
\end{align}
and
\begin{equation}\label{eq:L3.3proof3}
\psi_\varepsilon^{\alpha-2}\langle \nabla u,\nabla \Delta u\rangle
=-n\psi_\varepsilon^{\alpha-2}|\nabla u|\langle \nabla f,\nabla u\rangle
-nf\psi_\varepsilon^{\alpha-2}\langle \nabla u,\nabla|\nabla u|\rangle.    
\end{equation}
Using a parameter $\lambda $ to interpolate between \eqref{eq:L3.3proof2} and \eqref{eq:L3.3proof3}, and inserting this into \eqref{eq:L3.3proof} gives
\begin{align}
\begin{split}
    &\alpha^{-1}\psi_\varepsilon^{2-\alpha}\Delta\psi_\varepsilon^\alpha
\\ =&|\overline{\nabla}^2 u|^2+nf^2|\nabla u|^2
+\Ric(\nabla u,\nabla u)
\\&+(\alpha-2)\left(\big|\nabla|\nabla u|
+f\nabla u\big|^2
    -f^2|\nabla u|^2
    -2f\langle \nabla |\nabla u|,\nabla u\rangle\right)\psi_\varepsilon^{-2}|\nabla u|^2
    \\ & -n^2\lambda f^2|\nabla u|^2
    +n(\alpha-2)\lambda f|\nabla u|^2\psi_\varepsilon^{-2}\langle \nabla|\nabla u|,\nabla u\rangle
    + \lambda \psi_\varepsilon^{2-\alpha}\textnormal{div}(\Delta u\cdot \psi_\varepsilon^{\alpha-2}\nabla u)
    \\ &-n(1-\lambda)f\langle \nabla u,\nabla|\nabla u|\rangle 
    -n(1-\lambda)|\nabla u|\langle \nabla f,\nabla u\rangle.
    \end{split}
\end{align}
Setting $\lambda=3-n$ yields
\begin{align}
\begin{split}
&-2(\alpha-2)+n\lambda(\alpha-2)-n(1-\lambda)=0,
\end{split}
\end{align}
and thus the coefficients of $\langle\nabla |\nabla u|,\nabla u\rangle$ incur many cancellations.
Then a further reorganization of terms produces the formula
\begin{align}\label{e:riccimainpointwise}
\begin{split}
    &\alpha^{-1}\psi_\varepsilon^{2-\alpha}\Delta\psi_\varepsilon^\alpha-(3-n)\psi_\varepsilon^{2-\alpha}\textnormal{div}(\Delta u\cdot \psi_\varepsilon^{\alpha-2}\nabla u)
\\ =&|\overline{\nabla}^2 u|^2-\frac{n}{n-1}\big|\nabla|\nabla u|
+f\nabla u\big|^2
\\&+\frac{n^2(n-2)^2}{n-1}f^2|\nabla u|^2
    -n(n-2)|\nabla u|\langle \nabla f,\nabla u\rangle
    +\Ric(\nabla u,\nabla u)
    \\
   &+\frac {n\varepsilon^2}{n-1}
\left(
\big|\nabla|\nabla u|
+f\nabla u\big|^2
-(n-1)(n-2)f\langle \nabla |\nabla u|,\nabla u\rangle 
    -f^2|\nabla u|^2
   \right)
   \psi_\varepsilon^{-2}.
   \end{split}
\end{align}

Finally, we multiply \eqref{e:riccimainpointwise} by $\alpha \psi_\varepsilon^{\alpha-2}$, integrate by parts, and study the boundary term. 
Let $\mathbf{n}$ be the outer normal vector on $\partial M^n$, noting that $\mathbf{n}=-\nu$ on $\partial_-M^n$ and $\mathbf{n}=\nu$ on $\partial_+ M^{n}$, then
\begin{align}
\begin{split}
&\int_{M^n} \Delta \psi_\varepsilon^{\frac{n-2}{n-1}}+\frac{(n-3)(n-2)}{n-1}\textnormal{div}(\Delta u\cdot \psi^{-\frac{n}{n-1}}\nabla u) dV
  \\=&\int_{\partial M^n} \partial_{\mathbf{n}}\psi_\varepsilon^{\frac{n-2}{n-1}}+\frac{(n-3)(n-2)}{n-1}\Delta u\cdot \psi_\varepsilon^{-\frac{n}{n-1}}\partial_{\mathbf{n}} udA
  \\=&\int_{\partial M^n}\frac{n-2}{n-1}\psi_\varepsilon^{-\frac{n}{n-1}}|\nabla u|\langle\mathbf{n},\nu\rangle\left(\nabla_{\nu\nu}u+(n-3)\Delta u\right)dA
    \\=&\int_{\partial_- M^n}\frac{n-2}{n-1} \psi_\varepsilon^{-\frac{n}{n-1}}|\nabla u|^2(n(n-2)f-H)dA
    \\ &-\int_{\partial_+M^n}\frac{n-2}{n-1} \psi_\varepsilon^{-\frac{n}{n-1}}|\nabla u|^2(n(n-2)f+H)dA.
    \end{split}
\end{align}
In order to aid passage to the limit $\varepsilon\to0$, observe that
since $\alpha< 2$, $\psi_\varepsilon\ge |\nabla u|$, and $\psi_\varepsilon\ge \varepsilon$ we have
\begin{align}
\begin{split}
   &|\nabla|\nabla u|+f\nabla u|^2-(n-1)(n-2)f\langle\nabla |\nabla u|,\nabla u\rangle
    -f^2|\nabla u|^2
    \\ =& \bigg|\nabla|\nabla u|-\frac{n(n-3)}{2}f\nabla u\bigg|^2-\frac{n^2(n-3)^2}{4}f^2|\nabla u|^2
    \\ \ge &-\frac{n^2(n-3)^2}{4}f^2\psi_\varepsilon^{4-\alpha}\varepsilon^{-(2-\alpha)}.
    \end{split}
\end{align}
Therefore, the following integral formula holds
\begin{align}\label{eq:lemma3.4proof}
\begin{split}
        &\int_{\partial_- M^n}\frac{n-2}{n-1} \psi_\varepsilon^{-\frac{n}{n-1}}|\nabla u|^2(n(n-2)f-H)dA
    \\ &-\int_{\partial_+M^n}\frac{n-2}{n-1} \psi_\varepsilon^{-\frac{n}{n-1}}|\nabla u|^2(n(n-2)f+H)dA\\
    \ge&
        \int_{M^n}\frac{n-2}{n-1} \psi_{\varepsilon}^{-\frac n{n-1}}\left(
     |\overline \nabla^2u|^2-\frac{n}{n-1}|\nabla |\nabla u|+f\nabla u|^2
        \right)dV
        -\int_{M^n}c(n)\varepsilon^{\frac{n-2}{n-1}}f^2dV
        \\
&+
\int_{M^n}\frac{n-2}{n-1} \psi_{\varepsilon}^{-\frac n{n-1}}
\left(
\frac{n^2(n-2)^2}{n-1}f^2|\nabla u|^2
    -n(n-2)|\nabla u|\langle \nabla f,\nabla u\rangle+\Ric(\nabla u,\nabla u)
    \right)dV,
    \end{split}
\end{align}
where $c(n)$ is a nonnegative constant depending on $n$.
Since $f^2$ is integrable, the term involving $c(n)$ converges to zero. Furthermore, the limit may be taken inside the integral of each term in the last line of \eqref{eq:lemma3.4proof} using the dominated convergence theorem.
Lastly, the remaining term involving Hessian components has a nonnegative integrand by Remark \ref{Rm:Meyer integral formula}, and thus may be treated with Fatou's lemma, yielding the desired result.
\end{proof}

\begin{proof}[Proof of Theorem \ref{Meyer boundary}]
Suppose that the width of $(M^n,\partial_\pm M^n,g)$ satisfies
\begin{equation}
w:=d(\partial_-M^n,\partial_+M^n)\ge \arctan \frac{H_-}{n-1}+\arctan\frac{H_+}{n-1}.
\end{equation}
Define a $1$-variable Lipschitz function
\begin{equation} \label{f Meyers}
\tilde{f}(\tau)=
    \begin{cases}
    \frac{n-1}{n(n-2)}\tan(\tau-\arctan\frac{H_-}{n-1})  &\text{if } \tau\le \tilde{\tau},
    \\ 
    L(\tau) &\text{if } \tau\ge \tilde{\tau},
    \end{cases}
    \end{equation}
where $\tilde{\tau}=\max\{\arctan\frac{H_+}{n-1}+\arctan\frac{H_-}{n-1},\frac{\pi}{4}+\frac{1}{2}\arctan\frac{H_-}{n-1}\}$ and $L(\tau)$ is the unique linear function that makes $\tilde{f}(\tau)$ differentiable. 
From equation \eqref{f Meyers},
$\tilde{f}(\tau)$ is strictly increasing with respect to $\tau$.  Furthermore, $\tilde{f}(\tau)$ satisfies 
\begin{equation}\label{tilde f ode}
        \frac{n^2(n-2)^2}{n-1}\tilde{f}^2-n(n-2)\tilde{f}'+n-1\ge 0.
\end{equation}
Denote $f(x)=\tilde{f}(r(x))$, where $r(x)=d(x,\partial M^n_-)$ is the distance function to $\partial_- M^n$, and note that $f$ satisfies $n(n-2)f\le H$ on $\partial_-M^n$ as well as $n(n-2)f\ge -H$ on $\partial_+ M^n$.
Let $u$ be the corresponding spacetime harmonic function in Proposition \ref{p:existence} with $c_-=0$, $c_+=1$.
By Lemma \ref{L:Meyer integral formula}, Remark \ref{Rm:Meyer integral formula}, and equation \eqref{tilde f ode} we have 
\begin{align}   \label{error meyer}
\begin{split}
    0 \ge &
    \int_{\{\nabla u\ne0\}}|\nabla u|^{-\frac n{n-1}}\left(
        \frac{1}{2n-2}\sum_{i\ne j\le n-1} (\nabla_{ii}u-\nabla_{jj}u)^2
     +\frac{n-2}{2n-2}\sum_{i\ne j}|\nabla_{ij}u|^2
        \right)dV \\
        &        +\int_{M^n}|\nabla u|^{2-\frac n{n-1}}(\Ric(\nu,\nu)-(n-1))dV,
\end{split}
\end{align}
where we have used the Cauchy-Schwarz inequality $\langle \nabla f,\nabla u\rangle\le |\nabla f||\nabla u|= \tilde f'|\nabla u|$ and where $\{e_i\}_{i=1}^n$ is an orthonormal frame with $e_n=\nu$.
Therefore, at points where $\nabla u\ne0$ we have
\begin{align}
    \nabla_{ij}u=0\;   \textnormal{\;for\;} i\neq j ,
    \label{off diagonal}
    \quad\quad&
    \nabla_{ii}u=\nabla_{jj}u\quad\text{for $i,j\le n-1$},\\
    \Ric(\nu,\nu)=n-1 ,
    \quad\quad
   & \langle \nabla f,\nu\rangle=|\nabla f|
    \label{parallel} \textnormal{\; where $f$ is differentiable.}
\end{align}

Our first goal is to show that $\nabla u$ is nowhere vanishing.
Equation \eqref{off diagonal} implies that $\nabla_i|\nabla u|=0$, $i=1,2,\dots,n-1$, whenever $\nabla u\ne0$.
Therefore, connected components of level sets consist entirely of either regular or critical points. 
Let $t_0$ be the minimal value such that $\Sigma_{t_0}=\{u=t_0\}$ contains a critical point, and note that by the Hopf Lemma $t_0>0$.
Denote by $\Sigma'_{t_0}$ the connected component of $\Sigma_{t_0}$ which consists of critical points.
Take a small geodesic ball $B_{r}(x_0)\subset \{u<t_0\}$ such that $\partial B_r(x_0)\cap \Sigma'_{t_0}\neq \emptyset$. 
Then $u$ obtains its maximum on $\partial B_r(x_0)\cap\Sigma'_{t_0}$,
which contradicts the Hopf Lemma.
Hence, $\nabla u\ne0$ everywhere and $M^n =[0,1]\times \Sigma_0$ topologically splits as a product. This allows us to write $g=\frac{du^2}{|\nabla u|^2}+g_u$ where $g_u$ are the induced metrics on level sets.

According to \eqref{off diagonal}, we have $d(du/|\nabla u|)=0$. Since the de Rham class defined by $du/|\nabla u|$ is trivial, there exists a new coordinate $s$ with $s=0$ corresponding to $\partial_- M^n$, and such that $ds=du/|\nabla u|$. In fact, $s$ agrees with the distance function $r$ to $\partial_- M^n$. Furthermore, as in the proof of Theorem \ref{t:AFricci}, using \eqref{off diagonal} and the spacetime harmonic equation $\Delta u=-nf|\nabla u|$ produces
\begin{equation}
    \partial_u g_u=\frac{2\nabla^2 u|_{\Sigma_u}}{|\nabla u|}=
    \frac{2\Tr_{g_u}(\nabla^2u|_{\Sigma_u})}{(n-1)|\nabla u|} g_u
    =-\frac{2(\nabla_{\nu\nu}u+nf|\nabla u|)}{(n-1)|\nabla u|}g_u.
\end{equation}
Next, observe that $\nabla_{\nu\nu}u=\partial_\nu|\nabla u|$ is a function of $u$ alone, and the same is true for $f$ since $s=r$. It follows that there is a function $\psi=\psi(r)$ so that $g=dr^2+\psi^2 (r)g_0$ for some metric $g_0$ independent of $r$.

To determine $\psi$, we use standard calculations for the curvature of warped product manifolds, and \eqref{parallel}, to find
\begin{align}
   n-1 =\Ric(\nu,\nu)=&\Ric(\partial_r,\partial_r)
    = -(n-1)\frac{\psi''}{\psi}(r) \label{psi ode}.
\end{align}
The function $\psi$ is also subject to the boundary conditions 
\begin{align}
H_-=(n-1)\frac{\psi'}{\psi}(0),\quad\quad -H_+=(n-1)\frac{\psi'}{\psi}(w).
\end{align}
Therefore  $\psi(r)=a\cos(r-\arctan\frac{H_-}{n-1})$ for some $a>0$, and
\begin{equation}\label{e:ricciweq}
w=\arctan \frac{H_-}{n-1}+\arctan\frac{H_+}{n-1}.
\end{equation}
A computation with the Gauss equations shows that $\Ric (g_0)\ge (n-2)a^2g_0$. Finally, setting $\theta=r-\arctan\frac{H_-}{n-1}$ and $g_{\Sigma}=a^2 g_0$ gives the stated result. 
\end{proof}

\subsection{Cheng's rigidity and proof of Main Theorem \ref{Meyer boundary cor}}

In this section we prove Main Theorem \ref{Meyer boundary cor}, the Bonnet-Myers Theorem, and show Cheng's rigidity. 
The latter two will be treated as a consequence of the main theorem.
The arguments for this result follow a similar logic to the proof of Theorem \ref{Meyer boundary}, though the fact that $M^n$ is open and incomplete poses enormous technical difficulties.
Ideally, we would like to solve the spacetime harmonic equation $\Delta u=-nf|\nabla u|$ on $M^n$ with $f\to\pm \infty$ at $E_\pm$, where $f$ satisfies the differential inequality \eqref{tilde f ode}.
Since the incompleteness of $M^n$ makes this infeasible, we instead solve the spacetime harmonic equation on compact bands $\{M^n_\delta\}_{\delta>0}$ exhausting $M^n$. The mean curvature of $\partial M^n_\delta$ is completely uncontrolled, and adjusting $f$ to overcome this comes at the cost of slightly violating the inequality in \eqref{tilde f ode} in a fixed compact set. 
Decreasing this violation while running through the exhausting bands, and passing to a limit gives rise to a spacetime harmonic function satisfying a fundamental integral identity derived from Lemma \ref{L:Meyer integral formula}.

\begin{proof}[Proof of Main Theorem \ref{Meyer boundary cor}]
Let $\Sigma^{n-1}\subset M^n$ be a closed hypersurface separating $M^n$ into two connected components $M^n_\pm$ where $E_\pm$ is contained in $M^n_{\pm}$.
Suppose that $w_-+w_+\geq\pi$ where we define $w_{\pm}$ as the minimum $\min(\pi,d(\Sigma^{n-1},E_\pm))$, though we will soon see that $w_\pm<\pi$. Consider the signed distance to $\Sigma$ given by $\varrho(x)=\pm d(x,\Sigma)$ when $x\in M^n_\pm$.
Let $\delta>0$ be a small parameter and consider the band $({\widetilde{ M^n_\delta}},\partial_\pm \widetilde{ M_\delta^n},g)$ given by
\begin{align}
    \widetilde {M_\delta^n}=
    \{x\in M^n:\varrho(x)\in[-w_-+\delta,w_+-\delta]\}
\end{align}
where $\partial_\pm \widetilde{M_\delta^n}$ is assigned based on the sign of $\varrho$. As an application of Lemma \ref{l:compactification}, we have that $\widetilde{M_\delta^n}$ is compact. To see this, first note that if $w_- =w_+$ then no further argument is needed. Now suppose without loss of generality that $w_-$ is smaller than $w_+$, and apply Lemma \ref{l:compactification} to the closed $w_- -\delta$ neighborhood $N_1$ of $\Sigma$, showing that it is compact. This allows us to apply extension construction of \cite{Morrow} to find a metric $g'$ on $M^n$ which only differs from $g$ on $M^n_-\setminus N_1$ and makes $d_{g'}(\Sigma^{n-1},E_-)=\infty$. Applying Lemma \ref{l:compactification} to the closed $w_+-\delta$ neighborhood $N_2$ of $\Sigma^{n-1}$ in this new metric shows that $N_2$ is compact. We then conclude that the closed set $\widetilde{M^n_\delta}\subset N_2$ is compact.

Compactness of $\widetilde{M}^n_\delta$ is the crucial property which will allow us to apply Theorem \ref{Meyer boundary} and its proof. First, we will make a small perturbation of $\widetilde{M}^n_\delta$ to a new band $M^n_\delta$ such that $\partial M_\delta$ is smooth. Using the version of Sard's Theorem in \cite{Rifford}, almost all $\delta'$ are such that $\partial \widetilde{M^n_{\delta'}}$ is Lipschitz. We may consider a smooth hypersurface homologous and arbitrarily close to $\partial \widetilde{M^n_{\delta'}}$ by, for instance, running mean curvature flow for a short time \cite{EckerHuisken}. Denote by $M^n_\delta$ the region bounded by these smooth hypersurfaces, which we may assume has width at least $(w_-+w_+)-3\delta$. Furthermore, Theorem \ref{Meyer boundary} implies that the width of $M_\delta^n$ cannot exceed $\pi$. Since these inequalities hold for all $\delta>0$ we conclude that $w_-+w_+=\pi$.

Let us denote with $H_\delta=\sup_{\partial M_\delta^n} |H|$ where $H$ is the mean curvature of $\partial M^n_\delta$. 
Slightly adjusting the construction in Lemma \ref{construct f}
with $a=b=\frac{n(n-2)}{n-1}$ and appropriate $\varepsilon$, we find a function $f_\delta\in \mathrm{Lip}(M_\delta^n)$ satisfying the following
\begin{align}
\begin{split}
    \frac{n^2(n-2)^2}{n-1}f^2_\delta(x)+n-1-n(n-2)|\nabla f_\delta(x)|\ge-C\delta & \quad\text{   in }\mathcal{B},\\
    \frac{n^2(n-2)^2}{n-1}f^2_\delta(x)+n-1-n(n-2)|\nabla f_\delta(x)|\ge 0 &
   \quad \text{    in }M^n_\delta\setminus \mathcal{B},\\
   f_\delta \le- \frac1{n(n-2)} H_\delta\quad\text{ on }\partial_- M_\delta^n,\quad f_\delta\ge\frac1{n(n-2)} H_\delta&\quad \text{ on } \partial_+ M_\delta^n,
\end{split}
\end{align}
where $\mathcal{B}=\{-\frac{w_-}{2}\le \varrho(x)\le \frac{w_+}{2}\}$ and $C$ is a constant independent of $\delta$. Applying Proposition \ref{p:existence} with $f_\delta$, we may consider a spacetime harmonic function $u_\delta$ on $M^n_\delta$ with Dirichlet boundary conditions. Fix a small $\rho>0$ such that $\mathcal{B}\subset M_\rho^n$ and a point $p\in M_\rho^n$. By scaling and adding a constant to $u_\delta$, we arrange for $\sup_{M^n_{\rho}}|\nabla u_\delta|=1$ and $u_\delta(p)=0$. Integrating $du_\delta$ along paths in $M^n_\rho$, we find $|u_\delta|\le \textnormal{diam}(M^n_\rho)$ on $M^n_\rho$. 

Next, we apply the integral formula Lemma \ref{L:Meyer integral formula} and use the boundary conditions of $f_\delta$ to find that
\begin{align}   \label{error meyer1}
\begin{split}
    0 \ge &
    \int_{M^n_\delta} |\nabla u_\delta|^{-\frac n{n-1}}\left(
      |\overline \nabla^2u_\delta |^2-\frac{n}{n-1}|\nabla |\nabla u_\delta|+f_\delta\nabla u_\delta|^2
        \right)dV \\
        &      +\int_{M^n_\delta}|\nabla u_\delta|^{-\frac n{n-1}}(\Ric(\nabla u_\delta,\nabla u_\delta)-(n-1)|\nabla u_\delta|^2)dV
   \\& -C\delta\int_{\mathcal B}(n-2)|\nabla u_\delta|^{2-\frac{n}{n-1}}dV.
\end{split}
\end{align}
The next step is to take the limit $\delta\to0$. Since $|u_\delta|$ and $|\nabla u_\delta|$ are uniformly bounded on $M_\rho^n$, standard Schauder estimates allow us find a subsequential $C^{2,\beta}$ limit, for some $\beta\in(0,1)$ of $u_\delta$ on $M_\rho^n$, which we denote by $u$. The function $u$ solves the spacetime harmonic equation with $f(x)= \frac{n-1}{n(n-2)}\tan (\varrho(x)+\frac{w_--w_+}2)$. 
Notice that $u$ is nonconstant since $\sup_{M^n_{\rho}}|\nabla u|=1$.
By Fatou's Lemma and boundedness of $\|u_\delta\|_{C^{2,\beta}(M^n_\rho)}$, we may take a liminf of \eqref{error meyer1} to find
\begin{align}
\begin{split}
    0 = &
    \int_{ M^n_{\rho}} |\nabla u|^{-\frac n{n-1}}\left(
        |\overline \nabla^2u |^2-\frac{n}{n-1}|\nabla |\nabla u|+f\nabla u|^2
        \right)dV \\
        &
        +\int_{M^n_{\rho}}|\nabla u|^{-\frac n{n-1}}(\Ric(\nabla u,\nabla u)-(n-1)|\nabla u|^2)dV
 \end{split}
\end{align}
and so the conditions \eqref{off diagonal} and \eqref{parallel} hold on $M_\rho^n$.

Since $u$ is nonconstant, the image of $u$ contains a nonempty interval. According to Sard's Theorem, we may consider a regular value $t_0$ of $u$. We claim that $u$ has no critical point within the interior of $M^n_\rho$. If this fails, there is a critical value $t_1$ which is closest value to $t_0$. By symmetry of the following argument, we will assume $t_0<t_1$. As in the proof of Theorem \ref{Meyer boundary}, the component of $\Sigma_{t_1}$ containing a critical point must consist entirely of critical points. Then we may find a closed ball in $u^{-1}([t_0,t_1])$ which intersects the critical component of $\Sigma_{t_1}$ only along the boundary of the ball, contradicting the Hopf Lemma. We conclude $u$ has no interior critical points. 

As in the proof of Theorem \ref{Meyer boundary}, $\nabla u$ is parallel to $\nabla\varrho$ and hence $\varrho$ can be considered as a function of $u$ on $M_\rho^n$. The arguments of the previous theorem show there is a constant $b$ such that
\begin{equation}\label{eq warped}
g=d\varrho^2+\frac{\cos^2(\varrho-b)}{\cos^2 b} g_\Sigma    
\end{equation}
on $M_\rho^n$, where $g_\Sigma$ is the induced metric on $\Sigma$. Since $\rho$ was arbitrary, the splitting \eqref{eq warped} holds along $M_0^n:=\{x\in M^n| \varrho(x)\in (-w_-,w_+)\}$. By rescaling $g_\Sigma$ and shifting the $\varrho$ coordinate, we can conclude that $g=d\theta^2+\cos^2(\theta)g_0$ along $M_0^n$ where $g_0$ is a metric on $\Sigma$ with $\Ric(g_0)\ge (n-2)g_0$. Since $\theta$ ranges over $(-\pi/2, \pi/2)$, no connected Riemannian $n$-manifold with at least two ends can properly and isometrically contain $(M_0^n,d\theta^2+\cos^2(\theta)g_0)$. We conclude that $M_0^n=M^n$ and the proof is complete.
\end{proof}

Next, we use Theorem \ref{Meyer boundary cor} to give a new proof of Bonnet-Myers and the Cheng rigidity result regarding complete manifolds with positive Ricci curvature and maximal diameter.

\begin{corollary}\label{c:myerscheng}
Let $(M^n,g)$, $n\geq 2$ be a complete Riemannian manifold. If $\Ric\ge (n-1)g$, then the diameter satisfies $\mathrm{diam}(M^n,g)\le \pi$. Moreover, equality occurs in the diameter estimate only if $(M^n,g)$ is isometric to the unit round sphere.
\end{corollary}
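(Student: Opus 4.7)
The plan is to derive both assertions of Corollary \ref{c:myerscheng} as essentially direct applications of Main Theorem \ref{Meyer boundary cor} to the open manifold obtained by removing two points whose mutual distance realizes (or approximates) the diameter. I focus on $n \geq 3$; the case $n = 2$ reduces to the classical Bonnet theorem, since Main Theorem \ref{Meyer boundary cor} is only stated in dimensions $n \geq 3$.

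For the diameter bound, I would fix any two distinct points $p, q \in M^n$ and set $D = d(p, q)$. The punctured manifold $N := M^n \setminus \{p, q\}$ is open, and for any $\epsilon \in (0, D)$ the geodesic sphere $\Sigma_\epsilon := \partial B_\epsilon(p)$ is a closed embedded hypersurface in $N$. It separates the ends of $N$ into two disjoint nonempty classes: the collection $E_-$ consisting of the single puncture end at $p$, and the collection $E_+$ consisting of the puncture end at $q$ together with any other ends that $M^n$ may possess. The triangle inequality gives $d(\Sigma_\epsilon, E_-) = \epsilon$ and $d(\Sigma_\epsilon, E_+) \geq D - \epsilon$. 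Applying Main Theorem \ref{Meyer boundary cor} produces
\begin{equation*}
\epsilon + (D - \epsilon) \;\leq\; d(\Sigma_\epsilon, E_-) + d(\Sigma_\epsilon, E_+) \;\leq\; \pi,
\end{equation*}
so $D \leq \pi$. Since $p, q$ were arbitrary, $\diam(M^n, g) \leq \pi$, and Hopf-Rinow then forces $M^n$ to be compact.

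For the rigidity assertion, suppose $\diam(M^n, g) = \pi$ and choose $p, q \in M^n$ with $d(p, q) = \pi$, possible by compactness. The chain of inequalities above then saturates, and the rigidity clause of Main Theorem \ref{Meyer boundary cor} identifies $N$ isometrically with the warped product $((-\pi/2, \pi/2) \times \Sigma^{n-1}, d\theta^2 + \cos^2\theta \, g_\Sigma)$ for some closed $(n-1)$-manifold $(\Sigma^{n-1}, g_\Sigma)$ satisfying $\Ric_{g_\Sigma} \geq (n-2) g_\Sigma$. It remains to show that reinstating the two points yields the unit round sphere. Introducing $r = \pi/2 - \theta$ recasts the warped product metric near $p$ as $dr^2 + \sin^2(r) g_\Sigma$, while smoothness of $g$ at $p$ forces the induced metric on a geodesic sphere of radius $r$ to agree with $r^2 g_{S^{n-1}} + O(r^4)$ as $r \to 0$. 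Matching these expansions at $r = 0$ forces $g_\Sigma = g_{S^{n-1}}$, so $(M^n, g)$ is globally isometric to the unit round $n$-sphere.

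The main obstacle I anticipate is the smooth extension step identifying the cross-section as round. While essentially a standard warped-product computation near a singular apex, it must be executed with some care: one must check both that the warping profile $\cos\theta$ has the correct odd Taylor expansion about $\theta = \pm\pi/2$ and that the coordinate change between geodesic normal coordinates at $p$ and the warped-product coordinates is genuinely compatible, so that $(\Sigma^{n-1}, g_\Sigma)$ is pinned down uniquely. A lesser technical point is to verify that $\Sigma_\epsilon$ truly separates the ends as claimed; this holds because for sufficiently small $\epsilon$ the interior of $\Sigma_\epsilon$ together with $p$ is a smooth closed ball isolating only the $p$-puncture end from the rest of $N$.
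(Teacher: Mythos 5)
Your argument is correct and is essentially the paper's proof: puncture at two (approximately) diametrically opposite points, apply Main Theorem~\ref{Meyer boundary cor} with a small geodesic sphere $\Sigma_\epsilon$ about $p$ as the separating hypersurface, and in the equality case deduce roundness of the cross-section $(\Sigma^{n-1},g_\Sigma)$ from smooth extension of the warped-product metric across the two apex points. The paper is terser — it simply states that smooth extendibility across $p$ and $q$ forces $g_\Sigma = g_{S^{n-1}}$ — whereas you spell out the $r=\pi/2-\theta$ substitution and the leading-order matching of $\sin^2(r)\,g_\Sigma$ with $r^2 g_{S^{n-1}}$, a correct and slightly more explicit version of the same step.
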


\begin{proof}
The result is classical, and so we only present arguments in dimensions $n\geq 3$ where the new proof applies. Let $p,q$ be two points in $M^n$ realizing the diameter $\mathrm{diam}(M^n,g)$. 
Fix a sufficiently small $\varepsilon>0$ so that the $\varepsilon$-sphere about $p$ is smooth. Denote this sphere by $\Sigma^{n-1}$ and notice that $d(p,\Sigma^{n-1})=\varepsilon$ and $d(q,\Sigma^{n-1})=w-\varepsilon$. Now we may apply Main Theorem \ref{Meyer boundary cor} to the open manifold $M^n\setminus \{p,q\}$ with the separating hypersurface $\Sigma^{n-1}$. The inequality $w\leq\pi$ follows.

In the case of equality $w=\pi$, Main Theorem \ref{Meyer boundary cor} tells us we have the splitting
\begin{equation}\label{aeoiwfhoia}
(M^n\setminus\{p,q\},g)=\left(\left(-\frac{\pi}{2},\frac{\pi}{2}\right)\times\Sigma^{n-1},d\theta^2+\cos^2\theta g_0\right)
\end{equation}
where $g_0$ is a scaling of the induced metric on $\Sigma^{n-1}$.
It suffices to show that $g_0$ is the unit round metric on $S^{n-1}$. Note, however, that this follows immediately from the fact that the metric \eqref{aeoiwfhoia} must extend smoothly across $p$ and $q$.
\end{proof}

\begin{remark}
Lemma \ref{L:Meyer integral formula} is the only place where we use the assumption $n\ge3$ in the proofs of Theorem \ref{Meyer boundary} and Main Theorem \ref{Meyer boundary cor}.
We would like to point out that in dimension $2$ a similar computation yields 
\begin{align}\label{Gauss-Bonnet}
   \int_{M^2}\Ric(\nu,\nu)dA+ \int_{\partial M^2}Hds\leq0.
\end{align} 
Since $\Ric(\nu,\nu)$ is Gauss curvature, $H$ is geodesic curvature, and the Euler characteristic of a connected $2$-dimensional band cannot exceed $0$, inequality \eqref{Gauss-Bonnet} is a weak version of Gauss-Bonnet's theorem.
Inequality \eqref{Gauss-Bonnet} follows by integrating $\Delta \log|\nabla u|$ for a harmonic function $u$ with constant Dirichlet boundary conditions, and applying the Bochner formula.
\end{remark}


\section{Torical Band Inequality and Rigidity}\label{S:Band}



\begin{proof}[Proof of Theorem \ref{ToricBand}] Set $w=d(\partial_- M^3,\partial_+ M^3)$ and let $\delta\in (0,\tfrac{2\pi}{3})$ be such that
$\frac{4}{3}\mathrm{arctan}(H_0/2)<\frac{2\pi}{3}-\delta$.
Define 
\begin{equation}
w_0=\min\left\{w,\tfrac{2\pi}{3}-\delta\right\},
\end{equation}
and let $\tilde{f}\in C^1(\mathbb{R}_{+})$ be the increasing function given by
\begin{equation}\label{e:toruspotential}
\tilde{f}(\tau)=
\begin{cases}
\tan\left(\frac{3}{2}(\tau-w_0 /2)\right) &\text{ for }0\leq \tau< w_0 ,\\
\frac{3}{2}\sec^2(3w_0 /4) \cdot (\tau-w_0)+\tan(3w_0 /4)&\text{ for } \tau\geq w_0 .
\end{cases}
\end{equation}
Furthermore, denote the distance function to $\partial_-M^3$ by $r(x)=d(x,\partial_- M^3)$, and set $f=\tilde{f}\circ r \in \mathrm{Lip}(M^3)$. With the help of Proposition \ref{p:existence}, we may solve the corresponding spacetime harmonic Dirichlet boundary value problem
\begin{equation}\label{eq:spacetimeharmonic}
\begin{cases}
\Delta u + 3f|\nabla u|=0 & \text{ in }M^3 , \\
u=\pm 1 & \text{ on }\partial_\pm M^3 ,
\end{cases}
\end{equation}
such that the solution $u$ lies in $C^{2,\alpha}(M^3)$.
Using the integral inequality of Lemma \ref{integralformula}, we find that 
\begin{align}\label{eq:bandint1}
\begin{split}
&4\pi\int_{-1}^1\chi(\Sigma_t)dt+2\int_{\partial_- M^3}(2f-H)|\nabla u|dA-2\int_{\partial_+ M^3}(2f+H)|\nabla u|dA\\
\geq&\int_{M^3}\left(\frac{|\overline{\nabla}^2u|^2}{|\nabla u|}+(R+6f^2)|\nabla u|-4\langle \nabla f,\nabla u\rangle\right)dV
\end{split}
\end{align}
where then mean curvature $H$ is with respect to the outward normal. Note that even though $f$ is only Lipschitz, by Radamacher's theorem its derivatives exist almost everywhere, justifying the use of $\nabla f$ in the integral expression. 

First observe that the scalar curvature lower bound, the Cauchy-Schwarz inequality, the fact that $\tilde{f}$ is an increasing function, and a straightforward computation yield
\begin{align}\label{anfighiw}
\begin{split}
(R+6f^2)|\nabla u|-4\langle \nabla f,\nabla u\rangle=&\left[ 6+6\tilde{f}^2 (r) -4\tilde{f}'(r) +(R -6)\right]|\nabla u|\\
&+ 4\tilde{f}'(r) (|\nabla u|-\langle\nabla r,\nabla u\rangle)\\
\geq& (R -6)|\nabla u|+ 4\tilde{f}'(r) (|\nabla u|-\langle\nabla r,\nabla u\rangle)\\
\geq &0,
\end{split}
\end{align}
away from a set of measure zero. 
Since $H_2(M;\mathbb{Z})$ contains no spherical classes it holds that $\chi(\Sigma_t)\leq 0$ for all regular level sets of $u$. 
This last statement is a consequence of the maximum principle, which implies that all components of $\Sigma_t$ are homologically nontrivial. Combining this observation with \eqref{eq:bandint1} and \eqref{anfighiw}, we find that the boundary term of \eqref{eq:bandint1} must be nonnegative.
Inspecting the boundary term, we obtain
\begin{equation}\label{e:kak}
2f-H\leq 2\tilde{f}(0)+H_0 =-2\tan(3w_0/4)+H_0 \quad\quad\text{ on }\partial_- M^3,
\end{equation}
\begin{equation}\label{kakfkafk}
2f+H\geq 2\tilde{f}(w)-H_0 \geq 2\tan(3w_0/4)-H_0  \quad\quad\text{ on }\partial_+ M^3, 
\end{equation}
and therefore
\begin{equation}
w_0\leq \frac{4}{3}\mathrm{arctan}(H_0/2).
\end{equation}
If $w_0=\tfrac{2\pi}{3}-\delta$, then a contradiction is reached in light of the choice of $\delta$. It follows that $w_0=w$, and the desired inequality \eqref{eq:widthinequality} is established. Note also that this implies $H_0>0$. 

We will now address the rigidity statement of the theorem.
Assume that equality is achieved in \eqref{eq:widthinequality}. Then the arguments above give rise to
\begin{equation}\label{fjjjj}
|\nabla^2 u+fg|\nabla u||=|\overline{\nabla}^2 u|=0,\quad\quad\quad R =6,\quad\quad\quad w=\frac{4}{3}\mathrm{arctan}(H_0/2),
\end{equation}
\begin{equation}\label{fjjjjj}
\langle\nabla r,\nabla u\rangle =|\nabla u| \text{ }\text{ a.e.},\quad\quad H=-H_0 \text{ on }\partial M,\quad\quad
\chi(\Sigma_t)=0\text{ for regular values } t\in[-1,1].
\end{equation}
The first equation of \eqref{fjjjj} implies that whenever $|\nabla u|\neq 0$ we have
\begin{equation}
|\nabla \log|\nabla u||\leq\frac{|\nabla^2 u|}{|\nabla u|}= 3\sup_{M^3}|f|\leq C_0,
\end{equation}
where the constant $C_0$ depends on the diameter of $M^3$ and $H_0$. Applying this estimate along curves emanating from $\partial M^3$, where $|\nabla u|>0$ by the Hopf lemma, shows that $|\nabla u|>0$ on all of $M^3$.
The manifold is then topologically a cylinder $M^3 \cong [-1,1] \times T^2$, where we have also used the zero Euler characteristic property of level sets in \eqref{fjjjjj}. Moreover, the metric splits as
\begin{equation}\label{aknhiiowoh}
g=\frac{du^2}{|\nabla u|^{2}} +g_u,
\end{equation}
where $g_u$ is a family of metrics on the 2-torus. Next, observe that if $Y$ is a vector field tangent to a level set $\Sigma_t$ then the first equation of \eqref{fjjjj} yields
\begin{equation}
Y(|\nabla u|)=\nabla^2 u\left( \frac{\nabla u}{|\nabla u|},Y \right)
=-f \langle \nabla u,Y\rangle=0,
\end{equation}
so that $|\nabla u|$ is constant on $\Sigma_t$. Furthermore, denoting $\nu=\frac{\nabla u}{|\nabla u|}$,
\begin{equation}\label{fjqjh9hyj}
\partial_u |\nabla u|=\left\langle \nabla|\nabla u|, \frac{\nabla u}{|\nabla u|^2}\right\rangle=\nabla^2 u\left( \frac{\nabla u}{|\nabla u|},\frac{\nabla u}{|\nabla u|^2} \right)=-f \langle \nu,\nu\rangle=-f.
\end{equation}

From the structure of the metric \eqref{aknhiiowoh} it is clear that the distance function from the zero level set is a function of $u$ alone, that is $r=r(u)$ with $r(0)=0$, $r\in [0,w]$, and $dr=|\nabla u|^{-1} du$.
With a slight abuse of notation, we will write $g_r=g_{u(r)}$, $\Sigma_r=\Sigma_{u(r)}$, and denote the second fundamental form of level sets by $II_r =\tfrac{\nabla^2 u}{|\nabla u|}|_{\Sigma_r}$. Then the first equation of \eqref{fjjjj} produces
\begin{equation}\label{==g0juh}
\frac{1}{2}\partial_{r}g_{r}=II_{r}=-\tilde{f}(r) g_r.
\end{equation}
Keeping in mind the formula for $\tilde{f}$, it follows that 
\begin{equation}
g_r=\cos^{\frac{4}{3}}\!\left(\frac{3}{2}(r-w/2)\right)g_0,
\end{equation}
for some metric $g_0$ on $T^2$. 
Moreover, note that \eqref{==g0juh} shows the mean curvature of level sets is given by $H_r=-2\tilde{f}(r)$. The Riccati equation and two traces of the Gauss equations then imply
\begin{align}
\begin{split}
-2\tilde{f}'=\partial_r H_r =& -|II_r|^2 -\mathrm{Ric}(\nu,\nu)\\
=&-\frac{1}{2}\left(|II_r|^2 +H_r^2 +R\right)+K\\
=&-3\tilde{f}^2 -3 +K,
\end{split}
\end{align}
where $K$ is the Gaussian curvature of $\Sigma_r$. According to the ODE satisfied by $\tilde{f}(r)$ for $r\in[0,w]$, we conclude that $K=0$ and $g_0$ is flat. Lastly, by setting $s=r-w/2\in [-w/2,w/2]$ we obtain the desired form of the metric
\begin{equation}
g=ds^2 +\cos^{\frac{4}{3}}\!\left(\tfrac{3}{2}s \right)g_0.
\end{equation}
\end{proof}


\section{The Waist Inequalities}\label{S:waist}

\subsection{Proof of Theorem \ref{thm:waist}}\label{secwaist}

The argument presented here will be somewhat similar to the strategy employed for the torus band inequality of Theorem \ref{ToricBand}. Let $R \geq R_0>0$ and assume that $w:=\mathrm{diam}(M^3)>\frac{4\pi}{\sqrt{3R_0}}$. Take points $p,q\in M^3$ that realise the diameter so that $d(p,q)=w$, and define the two distance functions $r_p(x)=d(x,p)$, $r_q(x)=d(x,q)$. Since Sard's theorem holds for distance functions \cite{Rifford}, the critical values of $r_p$ and $r_q$ form a set of zero measure, and thus there exists an $r_0 \in (0,w)$ which is regular for $r_p$ and such that $w-r_0$ is regular for $r_q$. Note that since the distance functions may only be Lipschitz, the notion of a critical point must be suitably defined, and regular level sets are only guaranteed to be Lipschitz hypersurfaces in general, see \cite{Rifford} for further details. Consider the function $f\in \mathrm{Lip}(M^3 \setminus\{p,q\})$ given by
\begin{equation}
f(x)=
\begin{cases}
-\frac{2\pi}{3w}\cot\left(\frac{\pi}{w}r_{p}(x)\right) &\text{ if }x\in B_{r_0}(p),\\
-\frac{2\pi}{3w}\cot\left(\frac{\pi}{w}r_0 \right) &\text{ if }x\in M^{3} \setminus\left(B_{r_0}(p)\cup B_{w-r_0}(q)\right),\\
-\frac{2\pi}{3w}\cot\left(\pi-\frac{\pi}{w}r_{q}(x)\right) &\text{ if }x\in B_{w-r_0}(q),
\end{cases}
\end{equation}
and observe that a calculation shows the following holds wherever $f$ is differentiable
\begin{equation}\label{alfnhwowt2}
R_0+6f^2 -4|\nabla f|\geq R_0 -\frac{8\pi^2}{3w^2}\geq \frac{1}{2}R_0,
\end{equation}
where the second inequality follows from the hypothesis $w\geq\frac{4\pi}{\sqrt{3R_0}}$.

For any small $\varepsilon>0$, consider the Riemannian band $(M^3_{\varepsilon},\partial_{\pm}M^3_{\varepsilon},g)$ given by
\begin{equation}
M^3_{\varepsilon}=M^3 \setminus \left(B_{\varepsilon}(p) \cup B_{\varepsilon}(q)\right),\quad\quad
\partial_- M_{\varepsilon}^3 =\partial B_{\varepsilon}(p),\quad\quad \partial_+ M_{\varepsilon}^3 =\partial B_{\varepsilon}(q).
\end{equation}
Restricting our attention to $\varepsilon>0$ small enough to guarantee the smoothness of $\partial B_\varepsilon(p)$ and $\partial B_\varepsilon(q)$, consider the corresponding spacetime harmonic Dirichlet boundary value problem
\begin{equation}\label{eq:spacetimeharmonic100}
\begin{cases}
\Delta u_{\varepsilon} + 3f|\nabla u_{\varepsilon}|=0 & \text{ in }M^3_{\varepsilon}, \\
u_{\varepsilon}=\pm1& \text{ on }\partial_\pm M^3_{\varepsilon},
\end{cases}
\end{equation}
and note that Proposition \ref{p:existence} guarantees the existence of a unique solution in $C^{2,\alpha}(M^3_{\varepsilon})$. The integral inequality of Lemma \ref{integralformula} then gives
\begin{align}\label{eq:bandint1100}
\begin{split}
&4\pi\int_{-1}^{1}\chi(\Sigma_t^{\varepsilon})dt+2\int_{\partial_- M^3_{\varepsilon}}(2f-H)|\nabla u_{\varepsilon}|dA-2\int_{\partial_+ M^3_{\varepsilon}}(2f+H)|\nabla u_{\varepsilon}|dA\\
\geq&\int_{M^3_{\varepsilon}}\left(\frac{|\overline{\nabla}^2u_{\varepsilon}|^2}{|\nabla u_{\varepsilon}|}+(R+6f^2)|\nabla u_{\varepsilon}|-4\langle \nabla f,\nabla u_{\varepsilon}\rangle\right)dV,
\end{split}
\end{align}
where $\Sigma_t^{\varepsilon}$ denotes the $t$-level sets of $u_{\varepsilon}$ and $H$ represents the boundary mean curvature with respect to the outer normal.
Observe that $H=-\frac{2}{\varepsilon}+O(\varepsilon)$ and $f=\mp\frac{2}{3\varepsilon}+O(\varepsilon)$ on the geodesic spheres $\partial B_\varepsilon(p)$ and $\partial B_\varepsilon(q)$ as $\varepsilon\rightarrow 0$, where $\mp$ is associated with $p$ and $q$, respectively. Furthermore, according to Lemma \ref{L:A convergence}, $|\nabla u_{\varepsilon}|=O(1)$ on the boundary spheres as $\varepsilon\to0$. 
Combining these observations with \eqref{alfnhwowt2} and \eqref{eq:bandint1100} produces
\begin{equation}\label{e:penintform}
8\pi\int^{1}_{-1}n_{\varepsilon}(t)dt\geq \frac{R_0}{2}\int_{M^3_{\varepsilon}}|\nabla u_{\varepsilon}|dV +O(\varepsilon),
\end{equation}
where $n_{\varepsilon}(t)$ denotes the number of spherical components of regular level sets $\Sigma_{t}^{\varepsilon}$.

Arguing as in Remark \ref{alkjflkjah}, we see that for each $\varepsilon>0$ the function $n_{\varepsilon}:[-1,1]\rightarrow\mathbb{R}$ is measurable. 
We now claim that $n_{\varepsilon}(t)\leq b_2(M^3) +1$ for almost every $t$ and any sufficiently small $\varepsilon$, where $b_2(M^3)$ denotes the second Betti number. This follows by applying Lemma \ref{topangnha} below to $M^3_\varepsilon$, taking $\mathcal{S}^2$ to be the collection of spherical components of a given regular level set. This lemma is applicable because the maximum principle ensures that no component of $M^3_{\varepsilon}\setminus \mathcal{S}^2$ is bounded entirely by a subcollection of $\mathcal{S}^2$. Since $\partial M^3_\varepsilon$ consists of two spheres, and $H_2(M^3_\varepsilon,\partial M^3_\varepsilon)\cong H_2(M^3)$, the upper bound in Lemma \ref{topangnha} becomes $b_2(M^3)+1$.
Next, apply Lemma \ref{L:A convergence} to show that as $\varepsilon\to0$, $u_{\varepsilon}$ subconverges on compact subsets of $M^3 \setminus\{p,q\}$ to a spacetime harmonic function $u\in C^{2,\beta}(M^{3}\setminus \{p,q\})\cap C^{0,1}(M^3)$ with $\alpha\in[0,\beta)$, such that $u(p)=-1$ and $u(q)=1$.  By combining these facts with Fatou's Lemma, the coarea formula, and \eqref{e:penintform}, we find
\begin{equation}\label{all296t9t0}
\frac{32\pi}{R_0}\left(b_2(M^3) +1\right)\geq \int_{M^3}|\nabla u|dV=\int_{-1}^{1}\mathcal{H}^2(\Sigma_t)dt,
\end{equation}
where $\mathcal{H}^2(\Sigma_t)$ is the $2$-dimensional Hausdorff measure of the $t$-level set $\Sigma_t$ of $u$. The desired inequality \eqref{e:waistest} now follows from \eqref{all296t9t0}. It remains to establish the following topological estimate.

\begin{lemma}\label{topangnha}
Let $M^3$ be a connected orientable compact 3-manifold whose boundary has $b$ connected components. Let $\mathcal{S}^2\subset M^3$ be a collection of disjoint two-sided embedded spheres with the property that no component of $M^3\setminus \mathcal{S}^2$ is bounded entirely by a subcollection of spheres in $\mathcal{S}^2$. Then the number of components in $\mathcal{S}^2$ cannot exceed $\textnormal{rank} H_2(M^3,\partial M^3;\mathbb{Z})+b-1$.
\end{lemma}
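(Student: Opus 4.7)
The plan is to combine two ingredients. First, I would show that the inclusion $\mathcal{S}^2 \hookrightarrow M^3$ induces an injection on $H_2(\,\cdot\,;\mathbb{Z})$, so that $\textnormal{rank}\, H_2(M^3;\mathbb{Z}) \ge N$. Second, I would derive from the long exact sequence of the pair $(M^3, \partial M^3)$ the inequality $\textnormal{rank}\, H_2(M^3;\mathbb{Z}) \le \textnormal{rank}\, H_2(M^3, \partial M^3;\mathbb{Z}) + b - 1$. Composing these yields the lemma.

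To establish the injectivity, I would fix orientations on $M^3$ and on each sphere $S_i \in \mathcal{S}^2$ (available since $S_i$ is two-sided), and select a smooth triangulation of $M^3$ in which $\partial M^3 \cup \mathcal{S}^2$ appears as a subcomplex. Let $V_1, \ldots, V_k$ denote the closures of the components of $M^3 \setminus \mathcal{S}^2$. Given a relation $\sum_i a_i [S_i] = 0$ in $H_2(M^3;\mathbb{Z})$, there is a simplicial $3$-chain $C = \sum_\sigma c_\sigma \sigma$ with $\partial C = \sum_i a_i S_i$ at the chain level. For a $2$-simplex $\tau$ lying in the interior of some $V_j$ and not in $\mathcal{S}^2$, its coefficient in $\partial C$ is a signed difference of the coefficients of the two adjacent $3$-simplices and must vanish, forcing $c_\sigma$ to be constant on each $V_j$, say with value $n_j$. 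Likewise, any $2$-simplex on $\partial M^3$ is a face of a unique $3$-simplex whose coefficient must vanish, so $n_j = 0$ whenever $V_j$ meets $\partial M^3$. By hypothesis every $V_j$ meets $\partial M^3$—otherwise $V_j$ would be bounded entirely by spheres in $\mathcal{S}^2$—so $C = 0$. Hence $\sum_i a_i S_i = 0$ as a chain, which forces each $a_i = 0$ since the spheres are disjoint.

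For the second step, write $\partial M^3 = F_1 \sqcup \cdots \sqcup F_b$ with boundary orientations, so that $H_2(\partial M^3;\mathbb{Z}) \cong \mathbb{Z}^b$ is generated by the fundamental classes $[F_s]$. Since $\partial M^3$ bounds the $3$-chain $M^3$, the relation $\sum_s [F_s] = 0$ holds in $H_2(M^3;\mathbb{Z})$, so the image of $i_* \colon H_2(\partial M^3;\mathbb{Z}) \to H_2(M^3;\mathbb{Z})$ has rank at most $b - 1$. Exactness of
\[
H_2(\partial M^3;\mathbb{Z}) \xrightarrow{i_*} H_2(M^3;\mathbb{Z}) \xrightarrow{j_*} H_2(M^3, \partial M^3;\mathbb{Z})
\]
then delivers the desired inequality.

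The main obstacle is the chain-level bookkeeping in the injectivity step: one must carefully verify that constancy of $C$ on each complementary region and its vanishing on regions meeting $\partial M^3$ are genuinely forced, with signs correctly propagated. All remaining manipulations are routine homological algebra, and the hypothesis of the lemma enters exactly once, namely to guarantee that every $V_j$ touches $\partial M^3$.
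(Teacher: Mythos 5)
Your proof is correct but takes a genuinely different route. You establish that the classes $[S_i]$ are linearly independent in $H_2(M^3;\mathbb{Z})$ by a chain-level argument: a $3$-chain bounding $\sum_i a_i S_i$ has locally constant coefficients on each complementary region $V_j$, and the hypothesis forces each $V_j$ to touch $\partial M^3$, where the coefficient vanishes, so the chain is zero and hence each $a_i=0$. You then combine this with the general bound $\mathrm{rank}\,H_2(M^3)\le \mathrm{rank}\,H_2(M^3,\partial M^3)+b-1$, which follows from the long exact sequence of the pair together with the nontriviality of $\partial_*\colon H_3(M^3,\partial M^3)\to H_2(\partial M^3)$. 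The paper instead runs the long exact sequence of the triple $(M^3,\partial M^3\cup\mathcal{S}^2,\partial M^3)$, uses excision and Poincar\'e--Lefschetz duality to identify $H_3(M^3,\partial M^3\cup\mathcal{S}^2)\cong H^0(N^3)\cong\mathbb{Z}^k$ where $N^3$ is $M^3$ cut along $\mathcal{S}^2$ and $k$ is the number of complementary components, feeds the hypothesis in via $k\le b$, and closes with the alternating sum of ranks. Your route is more geometric and elementary, directly proving independence of the $[S_i]$ in $H_2(M^3)$ (slightly more information than the lemma asks for); the paper's is more compact and yields an exact identity for $|\mathcal{S}^2|$ before discarding terms. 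The one step worth spelling out a bit more in your write-up is the constancy claim, which uses the (standard but nontrivial) fact that the interior dual graph of a triangulated connected $3$-manifold with boundary is connected, so that equality of neighboring coefficients genuinely propagates across each $V_j$.
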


\begin{proof}
All (co)homology groups in this proof are understood to have integer coefficients. 
Consider the triple $(M^3,\partial M^3\cup \mathcal{S}^2,\partial M^3)$ and the associated long exact sequence
\begin{equation}\label{e:LEStriple}
\begin{tikzpicture}[>=triangle 60,baseline=(current  bounding  box.center)]
\matrix[matrix of math nodes,column sep={100pt,between origins},row
sep={20pt,between origins},nodes={asymmetrical rectangle}] (s)
{
&|[name=k0]|\phantom{00000000000}0&|[name=ka]| H_3(M^3,\partial M^3) &|[name=kb]| H_3(M^3,\partial M^3\cup \mathcal{S}^2) &\\
|[name=00]|&|[name=01]|&|[name=02]| &|[name=03]| &|[name=04]| \\
|[name=c00]|&|[name=c0]| H_2(\partial M^3\cup \mathcal{S}^2,\partial M^3)&|[name=ca]| H_2( M^3,\partial M^3) &|[name=cb]| H_2(M^3,\partial M^3\cup \mathcal{S}^2) &|[name=cc]| 0\phantom{000000000} \\
};
\draw[->] (k0) edge (ka)
          (ka) edge (kb)
          (c0) edge (ca)
          (ca) edge (cb)
          (cb) edge (cc)
;
\draw[->,black,rounded corners] (kb) -| 
 ($(04.west)-(1.2,0)$) |- ($(01)+(-.3,0)$) -|
($(c00.east)+(1,0)$) |- (c0);
\end{tikzpicture}
\end{equation}

Let $U\subset M^3$ be a thickened copy of $\mathcal{S}^2$, retracting onto $\mathcal{S}^2$ and avoiding $\partial M^3$. 
Also consider the compact manifold $N^3$ obtained by removing $\mathcal{S}^2$ from $M^3$ and attaching two copies of $\mathcal{S}^2$ back to the resulting edges.
Evidently, the pair $(N^3,\partial N^3)$ is homotopy equivalent to $(M^3\setminus \mathcal{S}^2,\partial M^3\cup( U\setminus\mathcal{S}^2))$.
Using excision and Poincar{\'e}-Lefschetz duality, we have the following chain of isomorphisms
\begin{align}\label{e:homologyiso}
\begin{split}
H_3(M^3,\partial M^3\cup \mathcal{S}^2)\cong H_3(M^3,\partial M^3\cup U)&\cong H_3(M^3\setminus\mathcal{S}^2,\partial M^3\cup (U\setminus\mathcal{S}^2))\\
{}&\cong H_3(N^3,\partial N^3)\cong H^0(N^3).
\end{split}
\end{align}
Notice that $k:=\mathrm{rank} H^0(N^3)$ represents the number of components of $M^3\setminus\mathcal{S}^2$. 
We claim that $k\leq b$. 
Indeed, since components of $M^3\setminus \mathcal{S}^2$ cannot be bounded entirely by spheres in $\mathcal{S}^2$, each component of $M^3\setminus \mathcal{S}^2$ contains at least one component of $\partial M^3$. 
Next, due to the fact that the alternating sum of the ranks of abelian groups in a long exact sequence vanishes, \eqref{e:LEStriple} and \eqref{e:homologyiso} imply
\begin{align}
\begin{split}
   \mathrm{rank} H_2(\partial M^3\cup \mathcal{S}^2 ,\partial M^3)=& \mathrm{rank} H_2(M^3,\partial M^3)-\mathrm{rank} H_2(M^3,\partial M^3\cup \mathcal{S}^2)\\
   {}&\qquad+\mathrm{rank}H_3(M^3,\partial M^3\cup \mathcal{S}^2)-\mathrm{rank}H_3(M^3,\partial M^3)\\
   =& \mathrm{rank} H_2(M^3,\partial M^3)-\mathrm{rank} H_2(M^3,\partial M^3\cup \mathcal{S}^2)+k-1\\
   \le& \mathrm{rank} H_2(M^3,\partial M^3)+b-1.
   \end{split}
\end{align}
Since $\mathrm{rank} H_2(\partial M^3\cup \mathcal{S}^2 ,\partial M^3)$ is the number of components of $\mathcal{S}^2$, the lemma follows.
\end{proof}

\subsection{Proof of Theorem \ref{thm:dice}}

Let $(M^3,g)$ be closed with scalar curvature satisfying $R \geq R_0>0$, and set $w_0:=\frac{4\pi}{\sqrt{3R_0}}$. 
Take $p\in M^3$ and consider the distance function $r_p(x)=d(x,p)$. 
As in the previous subsection, we recall that critical values of $r_p$ form a null set according to the version of Sard's theorem given in \cite{Rifford}, and therefore a regular value may be chosen arbitrarily close to $2w_0$; we will assume that this has been done and for simplicity will continue to denote the value by $2w_0$. The geodesic sphere $\partial B_{2w_0}(p)$ then forms a (possibly empty) codimension-1 Lipschitz submanifold. If this surface is empty, set $V_1=B_{2w_0}(p)=M^3$ and there is nothing more to show. If it is nonempty, take a smooth approximating surface $S_1$ which bounds a closed region $B_1$ that is arbitrarily close in Hausdorff distance to $B_{2w_0}(p)$; this approximation may be obtained by running mean curvature flow for a short time \cite{EckerHuisken}. Fix $\delta_1>0$ appropriately small and let $r_1(x)=d(x,S_1)$, then appealing to \cite{Rifford} we may assume that $\tfrac{w_0}{2} +\delta_1$ is a regular value for $r_1$. Now consider the Riemannian band $(M^3_1,\partial_{\pm}M^3_1,g)$ in which $M^3_1=\{x\in M^3\mid r_1(x)\leq \tfrac{w_0}{2} +\delta_1\}$, with $\partial_- M^3_1$ ($\partial_+ M^3_1$) denoting the boundary components having nontrivial (trivial) intersection with $B_1$. As before, the surfaces $\partial_{\pm} M^3_1$ may be replaced with smooth approximations if necessary, and by abuse of notation the relevant band having these smooth approximations as boundary will be denoted in the same way. Note that by definition of the sets involved, $\partial_- M^3_1$ is nonempty. If $\partial_+ M^3_1 =\emptyset$ then $M^3 \subset B_1\cup M^3_1$, so that by setting $V_1=M^3$ the argument is complete. Otherwise, we will seek a surface of controlled area within the band that may be found as a spacetime harmonic function level set. 
 
Let $\varepsilon>0$ be small such that $\tfrac{w_0}{2} -\varepsilon$ is regular for $r_1$, and consider the function $f_{\varepsilon}\in \mathrm{Lip}(M^3_1)$ given by
\begin{equation}
f_{\varepsilon}(x)=
\begin{cases}
-\frac{2\pi}{3w_0}\tan\left(\frac{\pi}{w_0}r_{1}(x)\right) &\text{ if }r_1(x)\leq \tfrac{w_0}{2} -\varepsilon\text{ and }x\in B_1 ,\\
-\frac{2\pi}{3w_0}\cot\left(\frac{\varepsilon\pi}{w_0} \right) &\text{ if }r_1(x)>\tfrac{w_0}{2} -\varepsilon\text{ and }x\in B_1 ,\\
\frac{2\pi}{3w_0}\tan\left(\frac{\pi}{w_0}r_{1}(x)\right) &\text{ if }r_1(x)\leq \tfrac{w_0}{2} -\varepsilon\text{ and }x\notin B_1 ,\\
\frac{2\pi}{3w_0}\cot\left(\frac{\varepsilon\pi}{w_0} \right) &\text{ if }r_1(x)>\tfrac{w_0}{2} -\varepsilon\text{ and }x\notin B_1 .
\end{cases}
\end{equation}
Observe that a calculation yields almost everywhere
\begin{equation}\label{alfnhwowt}
R_0+6f_{\varepsilon}^2 -4|\nabla f_{\varepsilon}|\geq R_0 -\frac{8\pi^2}{3w^2_0}= \frac{1}{2}R_0
\end{equation}
and for $\varepsilon$ fixed sufficiently small we have
\begin{equation}
2f_{\varepsilon}-H<0 \quad\text{on } \partial_- M_1^3,\quad\quad\quad
2f_{\varepsilon}+H>0\quad\text{on } \partial_+ M_1^3,
\end{equation}
where $H$ denotes the boundary mean curvature with respect to the outer normal.
Furthermore, Proposition \ref{p:existence} gives a unique solution in $C^{2,\alpha}(M^3_1)$ to the corresponding spacetime harmonic Dirichlet boundary value problem
\begin{equation}\label{eq:spacetimeharmonic100111}
\begin{cases}
\Delta u_{1} + 3f_{\varepsilon}|\nabla u_{1}|=0 & \text{ in }M^3_{1}, \\
u_{1}=\pm 1& \text{ on }\partial_{\pm} M^3_{1}.
\end{cases}
\end{equation}
Therefore, in a manner similar to that of Section \ref{secwaist}, we may then apply the integral inequality of Lemma \ref{integralformula} together with the coarea formula to obtain
\begin{equation}
8\pi\int^{1}_{-1}N_{1}(t)dt\geq 4\pi\int_{-1}^1 \chi(\Sigma_t^{1})dt\geq \frac{R_0}{2}\int_{M^3_{1}}|\nabla u_{1}|dV =\frac{R_0}{2}\int_{-1}^1 \mathcal{H}^2(\Sigma_t^{1})dt,
\end{equation}
where $N_{1}(t)$ denotes the number of components of regular level sets $\Sigma_{t}^{1}=u_{1}^{-1}(t)$ and $\mathcal{H}^2$ indicates 2-dimensional Hausdorff measure. It follows that there exists a regular value $t_1\in [-1,1]$ such that the `normalized area' of the level set satisfies the bound
\begin{equation}
\mathcal{A}(\Sigma_{t_1}^1):= N_1(t_1)^{-1}\mathcal{H}^2(\Sigma_{t_1}^{1})\leq \frac{16\pi}{R_0}.
\end{equation}
Now set $V_1=\left(B_1 \setminus M^3_1 \right)\cup u_1^{-1}([-1,t_1])$, and note that the entirety of $V_1$ lies within a controlled distance to its boundary $\partial V_1 =\Sigma_{t_1}^1$, that is,
\begin{equation}
\max_{x\in V_1}d(x,\partial V_1)\leq 5w_0
\end{equation}
if $\delta_1$ is small enough.

Next, let $B_{2w_0}(\partial V_1)$ be the set of points in $M^3$ of distance less than or equal to $2w_0$ from $\partial V_1$. As before, using the version of Sard's theorem presented in \cite{Rifford}, we may assume that this number has been replaced with a nearby one which is a regular value for the distance function while keeping the same notation. Set $S_2=\left[\partial B_{2w_0}(\partial V_1) \right]\setminus V_1$ to be the collection of boundary points disjoint from $V_1$. If this surface is empty define $V_2$ to be the closure of $B_{2w_0}(\partial V_1)\setminus V_1$, and there is nothing more to show since 
\begin{equation}\label{alkjfkhlqh}
\max_{x\in V_2}d(x,\partial V_2)\leq 2w_0 .
\end{equation}
If it is nonempty then take a smooth approximating surface, still denoted by $S_2$, which serves as a portion of the boundary for a closed region $B_2$ that is arbitrarily close in Hausdorff distance to $B_{2w_0}(\partial V_1)\setminus V_1$; as before this approximation may be obtained by running mean curvature flow for a short time \cite{EckerHuisken}. Fix $\delta_2>0$ appropriately small and such that $\tfrac{w_0}{2} +\delta_2$ is a regular value for $r_2(x)=d(x,S_2)$. Consider the Riemannian band $(M^3_2,\partial_{\pm}M^3_2,g)$ in which $M^3_2=\{x\in M^3\mid r_2(x)\leq \tfrac{w_0}{2} +\delta_2\}$, with $\partial_- M^3_2$ ($\partial_+ M^3_2$) representing the boundary components having nontrivial (trivial) intersection with $B_2$. If necessary, we replace the surfaces $\partial_{\pm} M^3_2$ with smooth approximations, and will continue to denote the resulting band having smooth boundary with the same notation. Note that by definition of the sets involved, $\partial_- M^3_2$ is nonempty. If $\partial_+ M^3_2 =\emptyset$, then set $V_2=B_2\cup M_2^3$ and the argument is complete, since \eqref{alkjfkhlqh} holds with $3w_0$ on the right-hand side. On the other hand, if this portion of the boundary is nonempty, then as above we may
find a surface $\Sigma_{t_2}^2\subset M^3_2$ that arises as a spacetime harmonic function level set $u_2^{-1}(t_2)$ and satisfies
\begin{equation}
\mathcal{A}(\Sigma_{t_2}^2):= N_2(t_2)^{-1}\mathcal{H}^2(\Sigma_{t_2}^{2})\leq \frac{16\pi}{R_0},
\end{equation}
where $N_2(t_2)$ indicates the number of components. Now set $V_2=\left(B_2 \setminus M^3_2 \right)\cup u_2^{-1}([-1,t_2])$, and observe that the distance between the two level sets satisfies
\begin{equation}
d(\Sigma_{t_1}^1,\Sigma_{t_2}^2)\leq 4w_0
\end{equation}
if $\delta_2$ is small enough.

\begin{figure}
\begin{picture}(0,0)
\put(16,25){$V_1$}
\put(57,34){$V_2$}
\put(121,8){$V_3$}
\put(175,33){$V_3$}
\put(234,6){$V_4$}
\put(229,75){$V_4$}
\put(109,32){\textcolor{darkblue}{$\Sigma_2$}}
\put(221,40){\textcolor{darkblue2}{$\Sigma_3$}}
\put(38,8){\textcolor{darkestblue}{$\Sigma_1$}}
\put(0,70){$(M^3,g)$}
\end{picture}
\includegraphics[totalheight=3cm]{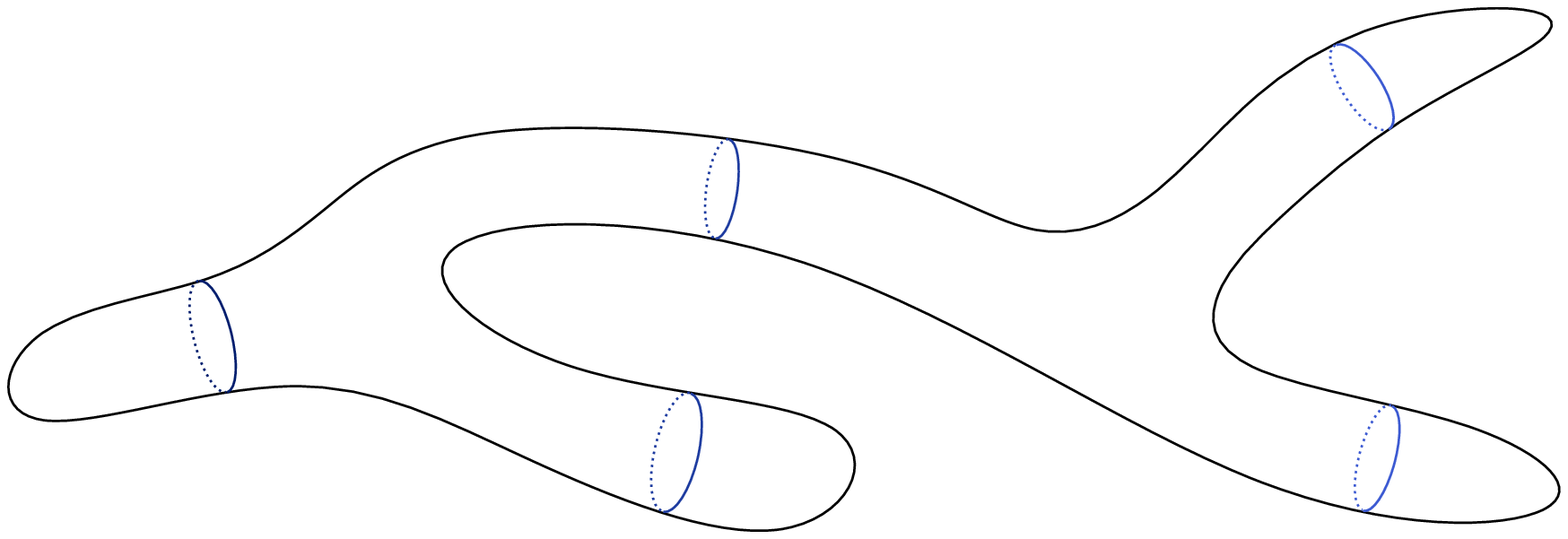}
\caption{The decomposition of $M^3$ into regions $\{V_i\}_{i=1}^{I}$.}
\label{figV}
\end{figure}


This procedure may be continued until $M^3$ is exhausted.  It gives rise to an inductive sequence of closed regions $V_1,\ldots,V_I$ that cover $M^3$, see Figure \ref{figV}. The boundaries $\partial V_i$ of these regions are regular level sets of spacetime harmonic functions, and are thus $C^2$ smooth. Furthermore, members of the sequence only have nontrivial intersection with their neighbors, and the intersection consists of boundary components. More precisely, $V_i \cap V_j =\emptyset$ unless $j=i\pm 1$ and $V_i \cap V_j =\partial V_i \cap \partial V_j$. The intersection surfaces have controlled `averaged area'
\begin{equation}
\mathcal{A}(\partial V_i \cap \partial V_{i+1})\leq \frac{16\pi}{R_0},\quad\quad\quad i=1,\ldots, I-1,
\end{equation}
and they lie within a fixed distance to one another
\begin{equation}
d(\partial V_{i-1} \cap \partial V_i , \partial V_i \cap \partial V_{i+1})\leq 4w_0,\quad\quad\quad i=2,\ldots,I-1,
\end{equation}
with the first and last regions staying within a bounded Hausdorff distance to their boundaries
\begin{equation}
d_H(V_1,\partial V_1)\leq 5w_0,\quad\quad\quad\quad d_H(V_I,\partial V_I)\leq 3w_0.
\end{equation}
Lastly, the maximum number of regions needed to exhaust the manifold may be estimated in terms of diameter as follows
\begin{equation}
w_0 (I-1)\leq \mathrm{diam}(M^3).
\end{equation}

\section{The Lipschitz Constant of Maps From $M^3\to S^3$} 
\label{S:Llarull}

This section is devoted to Theorems \ref{Llarull}, \ref{LlarullBand}, and Main Theorem \ref{LlarullIncomplete}. Before beginning, we introduce a few notations used in this section. Recall from Section \ref{s:statements} that $A[r_1,r_2]$ denotes the annular region in the unit sphere $(S^3,g_{S^3})$ centered about the north pole $N\in S^3$ with inner and outer radii $r_1$ and $r_2$, respectively. Consider a $3$-dimensional Riemannian band $(M^3,\partial_\pm M^3,g)$. We say that a map $\ell:M^3\to A[r_1,r_2]$ has {\emph{nonzero degree}} if $\ell(\partial_-M)\subset \partial B^{S^3}_{r_2}(N)$, $\ell(\partial_+M)\subset\partial B^{S^3}_{r_1}(N)$, and $\ell_*:H_3(M^3,\partial M^3;\mathbb{Z})\to H_3(A[r_1,r_2],\partial A[r_1,r_2];\mathbb{Z})$ is nontrivial. We use $\theta:S^3\to[0,\pi]$ to denote the spherical distance $\theta(x)=d_{S^3}(x,N)$. 


\subsection{Proof of Theorem \ref{LlarullBand}}

It suffices to show that if $H(x)\geq H_{g_{S^3}}(\ell(x))$ for all $x\in \partial M^3$, then the band is isometric to $A[r_1,r_2]$. We will therefore proceed with this mean curvature hypothesis.
Set $f(x)=\cot(\theta\circ\ell(x))$ and let $u\in C^{2,\alpha}(M^3)$ be the corresponding spacetime harmonic function with Dirichlet conditions $u=\pm1$ on $\partial_\pm M^3$ supplied by Proposition \ref{p:existence}. By assumption, the outward mean curvature of $\partial M^3$ satisfies $H+2f\ge 0$ on $\partial_+ M^3 $ and  $H-2f\ge 0$ on $\partial_- M^3$. Moreover,
using the topological assumptions on $M^3$ and Lemma \ref{topangnha}, the number of spherical components of a regular level set $\Sigma_t$ of $u$ cannot exceed $1$ so that $\chi(\Sigma_t)\le 2$. The integral formula of Lemma \ref{integralformula} then produces
\begin{align}\label{alfjhiw}
\begin{split}
&\int_{M^3}\left(\frac{|\overline\nabla^2u|^2}{|\nabla u|}+(R+6f^2)|\nabla u|-4\langle\nabla f,\nabla u\rangle\right)dV 
\\\le& 4\pi\int_{-1}^{1}\chi(\Sigma_t)dt-2\int_{\partial M^3}(H|\nabla u|+2\mathbf{n}(u) f)dA\\
\le &4\pi\int_{-1}^{1}\chi(\Sigma_t)dt\\
\leq & 16\pi,
\end{split}
\end{align}
where $\mathbf{n}$ is the outward unit vector to $\partial M^3$.

First we analyse the bulk integrand.
Using the scalar curvature lower bound, the Lipschitz condition on $\ell$, Cauchy-Schwartz, the co-area formula, and elementary trigonometric identities,
\begin{align} \label{mu-J1}
\begin{split}
&\int_{M^3}\left((R+6f^2)|\nabla u|-4\langle\nabla f,\nabla u\rangle \right)dV\\
\ge&
\int_{M^3}\left(6\csc^2 \left(\theta\circ\ell(x)\right)-4\csc^2(\theta\circ l(x))|D\ell|\right)|\nabla u|dV
\\ \ge&\int_{M^3} 2\csc^2(\theta\circ \ell)|\nabla u|dV\\
=&\int_{-1}^{1}\int_{\Sigma_t}2\csc^2(\theta\circ \ell)dA dt\\
\geq &\int_{-1}^{1}\int_{\ell(\Sigma_t)}2\csc^2 \theta dA_{S^3} dt
\end{split}
\end{align}
where $dA_{S^3}$ denotes the area element of $\ell(\Sigma_t)\subset S^3$ induced from the round metric.
Observe that the last integral over $\ell(\Sigma_t)$ represents double the area of this surface with respect to the metric
\begin{equation}
\csc^2\theta  g_{S^3}=\csc^2 \theta (d\theta^2+\sin^2 \theta g_{S^2})=\csc^2 \theta d\theta^2+g_{S^2}, 
\label{csc}
\end{equation}
which is a product metric on the cylinder $(0,\pi)\times S^2$. The minimum area for a homologically nontrivial surface in this cylinder is given by $4\pi$, which is achieved by cross-sections. Since the degree of $\ell$ is non-zero, and $\Sigma_t$ is homologous to $\partial_-M$, $\ell(\Sigma_t)$ is such a homologically nontrivial surface. It follows that
\begin{equation}\label{fuuuuu}
\int_{-1}^{1}\int_{\ell(\Sigma_t)}2\csc^2 \theta dA_{S^3} dt\geq 16\pi.    
\end{equation}

Combining \eqref{alfjhiw}, \eqref{mu-J1}, and \eqref{fuuuuu} forces all of the above inequalities to be equalities. From \eqref{alfjhiw}, we have $\overline{\nabla}^2u=0$, which allows us to integrate the fact that $|\nabla u|>0$ on $\partial M^3$ inwards and conclude $|\nabla u|\neq 0$ throughout $M^3$. Now we inspect the inequalities in \eqref{mu-J1}. From the first inequality there, $\nabla u$ must be parallel to $\nabla (\theta\circ\ell)$, and from the last inequality, $\ell|_{\Sigma_t}$ is area preserving. In other words,
\begin{equation} \label{l equation}
   {\bigg{\langle}} D\ell\left(\frac{\nabla u}{|\nabla u|}\right), \tilde{\nabla} \theta {\bigg{\rangle}}=-1,\quad |\det (D\ell|_{\Sigma_t})|=1,  
\end{equation}
where $\tilde{\nabla}$ is the gradient from $(S^3,g_{S^3})$. Combined with $|D\ell|\leq1$, we find that $\ell$ is a local isometry. Since $A[r_1,r_2]$ has no nontrivial covers, $\ell$ is globally isometric and the result follows.


\subsection{Proof of Theorem \ref{Llarull}}




We will postpone the proof of the inequality \eqref{e:Llarullquant} until the end and first focus on the rigidity statement. Our strategy is as follows: produce an appropriate exhaustion of $M^3$ by bands, solve a spacetime harmonic equation on these bands, apply the calculations performed in the proof of Theorem \ref{LlarullBand}, and then take a limit. In the step \eqref{alfjhiw}, it was crucial that the Euler characteristic of regular level sets satisfied $\chi(\Sigma_t)\le 2$. In order to arrange for this, some care must be taken in constructing the bands within $M^3$.

To begin, Sard's Theorem guarantees that almost all points of $S^3$ are regular for $\ell$, and in particular we may find antipodal regular values which we choose and denote by $\{N,S\}$.
Enumerate the points in $\ell^{-1}(N)$ by $N_0,N_1,\dots,N_k$ and points of $\ell^{-1}(S)$ by $S_0,S_1,\dots, S_l$. Let $\delta>0$ be a parameter which will eventually be taken to $0$. Let $U_{\delta}^j$ and $V_{\delta}^i$ be open neighborhoods around $N_j$ and $S_i$ such that $\ell(U_\delta^j)=B^{S^3}_{\delta}(N)$, $\ell(V_\delta^i)=B^{S^3}_{\delta}(S)$ for $j=0,1,\dots, k$ and $i=0,1,\dots,l$. Setting aside the first regions in the list, define $\mathcal{N}_{\delta}=\cup_{j=1}^k U^j_\delta$ and $\mathcal{S}_{\delta}=\cup_{i=1}^l V^i_\delta$. Since $\ell$ is regular at $\ell^{-1}(\{S,N\})$, there exists $C_\ell>0$ such that $U^j_{\delta}\subset B_{C_\ell\delta}(N_j)$ and $V^i_{\delta}\subset B_{C_\ell\delta}(S_i)$ for $j=0,1,\dots, k$ and $i=0,1,\dots,l$. See Figure \ref{Fig:Section6} for a depiction of these sets.

\begin{center}
\begin{figure}[h]
\begin{picture}(0,0)
\put(28,123){\textcolor{lightblue}{$U_\delta^1$}}
\put(196,126){\textcolor{lightblue}{$U_\delta^2$}}
\put(113,125){\textcolor{darkblue}{$U_\delta^0$}}
\put(30,49){\textcolor{lightred}{$V_\delta^1$}}
\put(195,33){\textcolor{lightred}{$V_\delta^2$}}
\put(115,33){\textcolor{darkred}{$V_\delta^0$}}
\put(197,71){\textcolor{darkgreen}{$\Sigma_t$}}
\put(104,85){{$(M^3,g)$}}
\put(276,86){$\ell$}
\put(375,133){N}
\put(377,19){S}
\put(370,98){\textcolor{darkgreen}{$\ell(\Sigma_t)$}}
\end{picture}
\includegraphics[scale=.8]{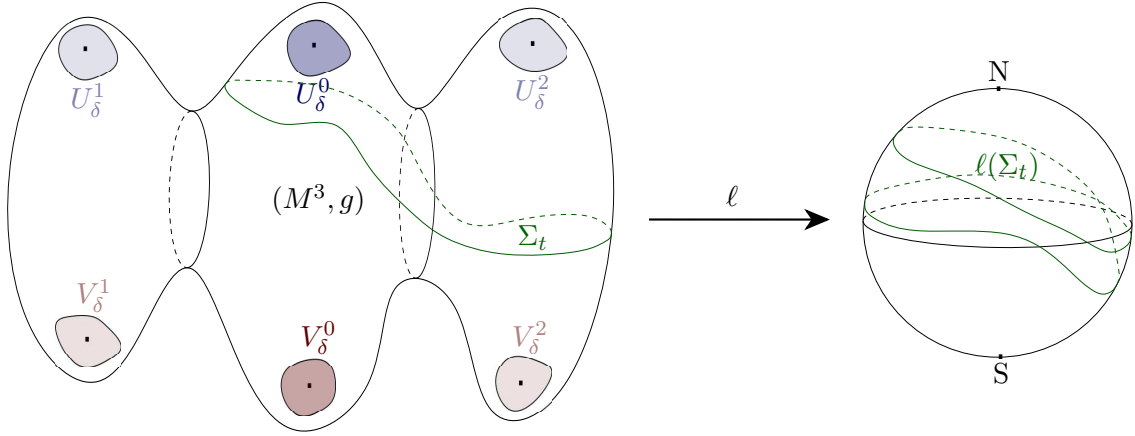}
\caption{
Schematic description of the construction used in the proof of Theorem \ref{Llarull}, including a level set $\Sigma_t$ of $u_{\varepsilon,\delta}$.
}\label{Fig:Section6}
\end{figure}
\end{center}

Fix once and for all a radius $r_1>0$ such that $U^0_{r_1},\cdots, U^k_{r_1}$ and $V^0_{r_1},\cdots, V^l_{r_1}$ are disjoint. For small $\varepsilon$, we will construct functions $f_{\varepsilon,\delta}$ on $M^3\setminus(\overline{U^0_\varepsilon\cup V^0_\varepsilon})$ in a manner similar to Lemma \ref{construct f}. The important difference is that here, $f_{\varepsilon,\delta}$ is defined using $\ell$ and $\theta$ and must blow up only near $N_0$, $S_0$.
For sufficiently small $\varepsilon\in(0,r_1)$, $\delta\in (0,\varepsilon)$, and a fixed $r_0\in(0,r_1)$, define $f_{\varepsilon,\delta}$ by
\begin{equation}
f_{\varepsilon,\delta}(x)=
\begin{cases}
\cot(\psi_\varepsilon^0 \circ \theta\circ \ell(x)) & \text{if } x\in U^0_{r_1}\cup V^0_{r_1}\setminus (\overline{U^0_{\varepsilon}\cup V^0_{\varepsilon}}), \\  
\cot(\theta\circ \ell(x)) & \text{if } x\in M^3 \setminus (\mathcal{N}_{\delta}\cup \mathcal{S}_{\delta}\cup U^0_{r_1}\cup V^0_{r_1}),\\
\cot(\psi_{\delta}\circ \theta\circ \ell (x)) & \text{if } x\in \mathcal{N}_{{\delta}}\cup \mathcal{S}_{\delta},
\end{cases}
\end{equation}
where 
\begin{equation}
\psi_{\delta}(\theta)=
\begin{cases}
\frac{1}{2\delta}\theta^2+\frac{1}{2}\delta & \text{if }\theta\le \delta,\\
\theta & \text{if }\delta\leq\theta\leq \pi-\delta,\\
\pi-[\frac{1}{2\delta}(\pi-\theta)^2+\frac{1}{2}\delta] & \text{if }\theta\ge \pi- \delta,
\end{cases}
\end{equation}
prevents $f_{\varepsilon,\delta}$ from blowing up near $N_1,\dots, N_k, S_1,\dots, S_l$, and 
\begin{equation}
\psi_\varepsilon^0(\theta)=
\begin{cases}
\theta-\varepsilon & \text{if } \varepsilon\le \theta\le r_0,\\ \theta-\varepsilon+\varepsilon\psi(\frac{\theta-r_0}{r_1-r_0}) & \text{if } r_0\le \theta\le r_1,\\ 
\theta & \text{if } r_1\le \theta\le \pi-r_1,\\ 
\theta+\varepsilon \psi(\frac{\theta-\pi+r_1}{r_1-r_0}) & \text{if }
\pi-r_1\le \theta\le \pi-r_0,\\ 
\theta+\varepsilon & \text{if } \pi-r_0\le \theta\le \pi-\varepsilon,
\end{cases}
\end{equation}
forces $f_{\varepsilon,\delta}$ to blow up slightly before reaching $N_0,S_0$. Above, $\psi$ is a fixed smooth cut-off function on $[0,1]$ satisfying $\psi(0)=0$, $\psi(1)=1$, $\psi'(0)=\psi'(1)=0$, and $0\le\psi'(x)\le 2$. This ensures $1\le \tfrac{d}{d\theta}\psi_\varepsilon^{0}\le 1+\frac{2\varepsilon}{r_1-r_0}$. 

Next, we collect some properties of $f_{\varepsilon,\delta}$. Note that $f_{\varepsilon,\delta}\in \textnormal{Lip}_{loc}(M^3\backslash (\overline{U^0_\varepsilon\cup V^0_\varepsilon}))$ and is hence differentiable almost everywhere. Using $\mathrm{Lip}(\ell)\leq1$, the following differential inequalities hold almost everywhere: 
\begin{equation}\label{e:llarull69}
       3+3f_{\varepsilon,\delta}^2-2  |\nabla f_{\varepsilon,\delta}|\ge \csc^2 \theta\quad\quad \text{ in } M^3 \setminus (\mathcal{N}_{\delta}\cup \mathcal{S}_{\delta}\cup U^0_{r_1}\cup V^0_{r_1}),
\end{equation}
\begin{equation} \label{ODE:NS region}
  3+3f_{\varepsilon,\delta}^2-2  |\nabla f_{\varepsilon,\delta}|\ge 3\csc^2[\psi_\delta(\theta)]-  2\delta^{-1}\theta \csc^2[\psi_\delta(\theta)]>0\quad\quad \text{ in } \mathcal{N}_{\delta},
\end{equation}
\begin{equation}
    3+3f_{\varepsilon,\delta}^2-2  |\nabla f_{\varepsilon,\delta}|\ge\csc^2(\theta-\varepsilon)\ge \csc^2 \theta \quad\quad
    \text{ in } U^0_{r_0},
\end{equation}
and 
\begin{align}\label{e:llarull612}
\begin{split}
    3+3f_{\varepsilon,\delta}^2-2  |\nabla f_{\varepsilon,\delta}|
    \ge& 3\csc^2(\psi^0_\varepsilon\circ\theta)- 2\left(1+\frac{2\varepsilon}{r_1-r_0}\right)\csc^2(\psi^0_\varepsilon\circ \theta)
    \\ \ge& \left(1-\frac{4\varepsilon}{r_1-r_0}\right)\csc^2\theta\\ \ge& \csc^2\theta-C_{r_0,r_1}\varepsilon
    \end{split}
\end{align}
in the region $U^0_{r_1}\setminus U^0_{r_0}$.
Similar inequalities may be shown to hold in the regions $V^0_{r_1}$ and $\mathcal{S}_{\delta}$. 

For $\varepsilon_1>\varepsilon$, consider the band $(M^3_{\varepsilon_1},\partial_\pm M^3_{\varepsilon_1})$ where $M^3_{\varepsilon_1}=M^3\setminus (U^0_{\varepsilon_1}\cup V^0_{\varepsilon_1})$, $\partial _-M^3_{\varepsilon_1}=\partial V^0_{\varepsilon_1}$, and $\partial _+M^3_{\varepsilon_1}=\partial U^0_{\varepsilon_1}$. Since $f_{\varepsilon,\delta}$ blows up at $\partial U^0_\varepsilon$ and $\partial V^0_\varepsilon$, and $\ell$ is regular at $\{N,S\}$, we can find $\varepsilon_1\in(\varepsilon,2\varepsilon)$ such that $|H_{\varepsilon_1}|\le 2|f_{\varepsilon,\delta}|$ on $\partial M^3_{\varepsilon_1}$ where $H_{\varepsilon_1}$ is the outward mean curvature of the boundary. Now use Proposition \ref{p:existence} to solve the spacetime harmonic boundary value problem
\begin{equation}
\begin{cases}
    \Delta u_{\varepsilon,\delta}+3f_{\varepsilon,\delta}|\nabla u_{\varepsilon,\delta}|=0&\text{ in }M^3_{\varepsilon_1},\\
    u_{\varepsilon,\delta}=\pm 1& \text{ on }\partial_\pm M^3_{\varepsilon_1}.
    \label{setting pde}
\end{cases}
\end{equation}
Let $\mathbf{n}$ denote the outward unit normal to $\partial M^3_{\varepsilon_1}$. Applying the integral formula \eqref{integralformula}, one finds
\begin{align}\label{integral formula LL}
\begin{split}
    &\int_{M^3_{\varepsilon_1}}\left(\frac{1}{2}\frac{|\overline{\nabla}^2 u_{\varepsilon,\delta}|^2}{|\nabla u_{\varepsilon,\delta}|}+\frac12R|\nabla u_{\varepsilon,\delta}|+3f_{\varepsilon,\delta}^2|\nabla u_{\varepsilon,\delta}|-2\langle\nabla f_{\varepsilon,\delta}, \nabla u_{\varepsilon,\delta}\rangle\right)dV\\ \le& -\int_{\partial M^3_{\varepsilon_1}} (|\nabla u_{\varepsilon,\delta}|H_{\varepsilon_1}+2f_{\varepsilon,\delta}\mathbf{n}(u_{\varepsilon,\delta}))dA+\int^{1}_{-1} 2\pi \chi(\Sigma_t)dt.
\end{split}
\end{align}
On $\partial_\pm M^3_{\varepsilon_1}$, we have $H_{\varepsilon_1} \pm 2f_{\varepsilon,\delta}\ge 0$ and, due to the Dirichlet conditions, $\mathbf{n}(u_{\varepsilon,\delta})=\pm|\nabla u_{\varepsilon,\delta}|$. 
We conclude that the boundary term in \eqref{integral formula LL} is nonpositive. Furthermore, it follows from the previous estimates \eqref{e:llarull69}--\eqref{e:llarull612} that 
\begin{align}\label{integral estimate 4} 
\begin{split}
  &  \int_{M^3_{\varepsilon_1}}\left(\frac{1}{2}R|\nabla u_{\varepsilon,\delta}|+3f_{\varepsilon,\delta}^2|\nabla u_{\varepsilon,\delta}|-2\langle\nabla f_{\varepsilon,\delta}, \nabla u_{\varepsilon,\delta}\rangle\right)dV
\\    \ge& \int_{M^3_{\varepsilon_1}}(3+3f^2_{\varepsilon,\delta}-2|\nabla f_{\varepsilon,\delta}|)|\nabla u_{\varepsilon,\delta}|dV+\int_{M^3_{\varepsilon_1}}\tfrac{1}{2}|\nabla u_{\varepsilon,\delta}|(R-6)dV
\\ \ge& \int_{M^3_{\varepsilon_1}\setminus( \mathcal{N}_{\delta}\cup \mathcal{S}_{\delta})} |\nabla u_{\varepsilon,\delta}|\csc^2\theta dV-C_{r_0,r_1}\varepsilon \int_{\mathcal{B}} |\nabla u_{\varepsilon,\delta}| dV
\\ \ge& \int_{[-1,1]\setminus I_{\varepsilon,\delta}}\int_{\{u_{\varepsilon,\delta}=t\}}\csc^2\theta dA dt
-C_{r_0,r_1}\varepsilon \int_{\mathcal{B}} |\nabla u_{\varepsilon,\delta}|dV
\\ \ge & 4\pi 
\left(2-|I_{\varepsilon,\delta}|\right)
-C_{r_0,r_1}\varepsilon \int_{\mathcal{B}}|\nabla u_{\varepsilon,\delta}| dV,
\end{split}
\end{align}
where $\mathcal{B}= (U^0_{r_1}\setminus U^0_{r_0})\cup(V^0_{r_1}\setminus V^0_{r_0})$ and $I_{\varepsilon,\delta}=\{t\in[-1,1]\mid \{u_{\varepsilon,\delta}=t\}\cap(\mathcal{S}_{\delta}\cup\mathcal{N}_{\delta})\neq \emptyset\}$. Note that we have used the coarea formula, and an estimate similar to \eqref{fuuuuu} in order to arrive at \eqref{integral estimate 4}.
Since $U^i_\delta\subset B_{C_\ell \delta}(N_i)$, $V^i_\delta\subset B_{C_\ell \delta}(S_i)$, the gradient estimate Theorem \ref{T:Gradient Estimate} implies
\begin{equation}\label{measure}
    |I_{\varepsilon,\delta}|
    \le\sum_{i=1}^k 2C_\ell\delta \sup_{U^i_\delta}|\nabla u_{\varepsilon,\delta}|
+ \sum_{i=1}^l 2C_\ell\delta \sup_{V^i_\delta}|\nabla u_{\varepsilon,\delta}|\le C_1\delta,
\end{equation}
where $C_1$ depends on $k$, $l$,  $C_\ell$, and a Ricci curvature lower bound in neighborhoods of $\mathcal{S}_\delta\cup\mathcal{N}_\delta$. 
Moreover by Lemma \ref{topangnha} and $H_2(M^3,\Z)=0$, we have $\chi(\Sigma_t)\le2$.
Hence \eqref{integral formula LL}, \eqref{integral estimate 4}, and \eqref{measure} yield
\begin{align} \label{Hessian LL}
\begin{split}
     \int_{M^3_{\varepsilon_1}}\frac{1}{2}\frac{|\overline{\nabla}^2u_{\varepsilon,\delta}|^2}{|\nabla u_{\varepsilon,\delta}|}dV
     \le &4\pi C_1\delta+
    C_{r_0,r_1}\varepsilon \int_{\mathcal{B}}|\nabla u_{\varepsilon,\delta}| dV
    \\ \le &4\pi C_1\delta+C_{r_0,r_1}\varepsilon|\mathcal{B}|\sup_{\mathcal{B}}|\nabla u_{\varepsilon,\delta}|.
\end{split}
\end{align}

The next step is to take a limit of $u_{\varepsilon,\delta}$.
Fixing $\varepsilon$ and letting $\delta\to 0$, $f_{\varepsilon,\delta}$ are uniformly bounded on any compact subset in $M^3_{\varepsilon_1}\setminus(\ell^{-1}(N)\cup \ell^{-1}(S))$.
Thus, $u_{\varepsilon,\delta}$ subsequently converges to $u_\varepsilon$ in $C^{2,\alpha}$, for some $\alpha\in(0,1)$. 
Moreover, we have $\lim_{\delta\to 0}\sup_\mathcal{B} |\nabla u_{\varepsilon,\delta}|> 0$. Otherwise, $u_\varepsilon$ is a constant by the Hopf Lemma, which contradicts the Dirichlet condition on $\partial M^3_{\varepsilon_1}$. 
Therefore, for any $\varepsilon>0$, we can choose a $\delta=\delta(\varepsilon)$ with $0<\delta<\varepsilon$ such that $\delta<\varepsilon \sup_{\mathcal{B}}|\nabla u_{\varepsilon,\delta}|$. In what follows we will assume that $\delta$ is chosen in this way.

Let $\Omega$ be a connected compact subset of $M^3\backslash (\ell^{-1}(N)\cup \ell^{-1}(S))$ containing $\mathcal{B}$. We scale $u_{\varepsilon,\delta}$ to prevent $u_{\varepsilon,\delta}$ from converging to a constant as $\varepsilon$ and $\delta$ tend to $0$. Namely, fix $p\in \Omega$ and let 
\begin{align}
    \hat{u}_{\varepsilon,\delta}(x):=(\sup_\Omega |\nabla u_{\varepsilon,\delta}|)^{-1}(u_{\varepsilon,\delta}(x)-u_{\varepsilon,\delta}(p)).
\end{align}
Then $\hat{u}_{\varepsilon,\delta}$ is still a solution to 
$\Delta \hat{u}_{\varepsilon,\delta}+f_{\varepsilon,\delta}|\nabla \hat{u}_{\varepsilon,\delta}|=0$, and thus integral formulas such as \eqref{Hessian LL} still hold for $\hat{u}_{\varepsilon,\delta}$.  
On $\Omega$, we have $|\nabla \hat{u}_{\varepsilon,\delta}|\le 1$ and $|\hat{u}_{\varepsilon,\delta}|\le \textnormal{diam}(\Omega)$. 
Since $f_{\varepsilon,\delta}$ is uniformly bounded on $\Omega$, we have uniform $C^{2,\alpha}$ estimates for $\hat{u}_{\varepsilon,\delta}$. 
By passing a subsequence, $\hat{u}_{\varepsilon,\delta}$ converges to $u$ in $C^{2,\beta}$ for $\beta\in(0,\alpha)$ with $f_{\varepsilon,\delta}(x)\to f(x):= \cot(\theta\circ \ell(x))$ on $\Omega$ for $\varepsilon\to 0$. 
Because $\sup_\Omega |\nabla\hat{u}_{\varepsilon,\delta}|=1$, we also have $\sup_{\Omega}|\nabla u|=1$.
Thus, $u$ is not a constant function.
Define $\Omega_i=\{x\in\Omega: |\nabla u(x)|\ge i^{-1}\}$. 
For $\varepsilon$ small enough, we obtain $|\nabla \hat{u}_{\varepsilon,\delta}|\ge \frac{1}{2i}$ in $\Omega_i$.
Moreover
\begin{equation}
    \lim_{\varepsilon\to0}\frac{|\overline{\nabla}^2 \hat{u}_{\varepsilon,\delta}|^2}{|\nabla \hat{u}_{\varepsilon,\delta}|}(x)=\frac{|\overline{\nabla}^2 u|^2}{|\nabla u|}(x), \quad\textnormal{\;for all\;} x\in\Omega_i.
\end{equation}
Using equation \eqref{Hessian LL} and applying Fatou's lemma produces
\begin{equation}
  0=\liminf_{\varepsilon\to 0} \int_{\Omega}\frac{|\overline{\nabla}^2 \hat{u}_{\varepsilon,\delta}|^2}{|\nabla \hat{u}_{\varepsilon,\delta}|}dV\ge
 \liminf_{\varepsilon\to 0} \int_{\Omega_i}\frac{|\overline{\nabla}^2 \hat{u}_{\varepsilon,\delta}|^2}{|\nabla \hat{u}_{\varepsilon,\delta}|}dV\ge
    \int_{\Omega_i}\frac{|\overline{\nabla}^2 u|^2}{|\nabla u|}dV. 
\end{equation}
Therefore, $\overline{\nabla}^2 u=\nabla^2 u+f|\nabla u|g=0$ on $\Omega_i$, for any $i\in \N$. Suppose $x_0\in \Omega$ satisfies $|\nabla u|(x_0)=1$. For any $q\in \Omega$, let $\gamma:[0,1]\to \Omega$ be a curve connecting $x_0$ and $q$, then $\nabla_{\gamma'}|\nabla u|=-f\nabla_{\gamma'} u$ holds wherever $|\nabla u|\neq 0$. The ODE for $|\nabla u|$ implies $|\nabla u|(q)\ne0$, and therefore, $|\nabla u|\neq 0$ on $\Omega$. To finally see that $(M^3,g)$ must be the round sphere, we use the fact that $\ell$ satisfies the same equation as \eqref{l equation}
\begin{equation}
    {\bigg{\langle}} D\ell\left(\frac{\nabla u}{|\nabla u|}\right), \tilde{\nabla} \theta {\bigg{\rangle}}=-1,\quad |\det (D\ell|_{\Sigma_t})|=1, \textnormal{\;on\;} \Omega. 
    \label{LL equality}
\end{equation}
Consequently, after taking a exhaustion of $M^3$, $\ell$ locally isometric on $M^3$. Since $\ell$ is also a proper, it is a covering map onto $S^3$. It follows that $\ell$ is a global isometry.

The proof will be complete upon establishing the general inequality \eqref{e:Llarullquant} in Theorem \ref{Llarull}. Its proof requires only slight modifications to the above argument, which will now be explained. In the sequence of inequalities \eqref{integral estimate 4}, we no longer throw away the term involving $R-6$. Next, instead of considering the rescaled functions $\hat{u}_{\varepsilon,\delta}$, we directly estimate and take a limit of $u_{\varepsilon,\delta}$, in a similar fashion to the final steps in the proof of Theorem \ref{thm:waist}. 
A standard diagonal argument utilizing the uniform $C^0$ control of $u_{\varepsilon,\delta}$ from the maximum principle shows that a subsequence of $u_{\varepsilon,\delta}$ converges in $C^{2,\beta}$, $\beta\in(0,\alpha)$, to a spacetime harmonic function $u$ on any subset of $M^3$ which is compactly contained in the complement of $\ell^{-1}(N)\cup\ell^{-1}(S)$. Furthermore, in order to facilitate the interchange of limit and integral, notice that by applying Lemma \ref{L:A convergence} near $N_0$, $S_0$ and Theorem \ref{T:Gradient Estimate} near $N_1,\dots, N_k$ and $S_1,\dots,S_l$, we obtain a global uniform gradient bound for $u_{\varepsilon,\delta}$ throughout $M^3_{\varepsilon_1}$.
It should by pointed out that the hypothesis \eqref{eq:condition f} of Lemma \ref{L:A convergence} with $C\geq \frac{n-1}{n}=\frac{2}{3}$ may be confirmed directly from the definition of $f_{\delta,\varepsilon}$ with the aid of a Taylor expansion of cotangent together with the Lipschitz norm restriction for $\ell$, while the hypothesis of Theorem \ref{T:Gradient Estimate} involving $C_0$ is satisfied by virture of \eqref{e:llarull69}-\eqref{e:llarull612}.
Moreover, the proof of Lemma \ref{L:A convergence} implies something slightly stronger, namely $\pm u_{\varepsilon,\delta}\geq\tfrac12$ in a small but uniform neighborhood of $N_0$ and $S_0$, respectively, which crucially implies that $u$ is nonconstant. Combined with the arguments in the previous paragraph, 
we may take a limit of the integral inequality \eqref{integral formula LL} as $\varepsilon,\delta\to0$ to find
\begin{equation}
    \int_{M^3}|\nabla u|(6-R)dV\geq\int_{M^3}\frac{{\big{|}}\nabla^2u+\cot(\theta\circ\ell)|\nabla u|\;g{\big{|}}^2}{|\nabla u|}dV.
\end{equation}
Note that the uniform gradient bounds allow for an application of the dominated convergence theorem to give the left-hand side of this inequality, whereas the right-hand side is obtained by Fatou's lemma as before.



\subsection{Proof of Main Theorem \ref{LlarullIncomplete}}

Recall that $\theta(x)=d_{{S^3}}(x,N)$ and let $S^3_\varepsilon=\{x\in S^3:\varepsilon\le\theta(x)\le\pi-\varepsilon\}$.
Then $\{S^3_\varepsilon\}_{\varepsilon>0}$ is an exhaustion of $S^3\setminus\{N,S\}$.
Fix a constant $0<r_0<\frac{\pi}{2}$.
Similar to the previous proof and Lemma \ref{construct f}, there exist functions $f_\varepsilon\in \textnormal{Lip}(S^3_\varepsilon)$ such that
\begin{align}
\begin{split}
    3+3f^2_\varepsilon-2|\nabla f_\varepsilon|\ge \csc^2\theta \quad\quad  & \text{almost everywhere in $S^3_\varepsilon\backslash S^3_{r_0}$},
 \\ 3+3f^2_\varepsilon-2|\nabla f_\varepsilon|\ge\csc^2\theta-C\varepsilon \quad\quad &\text{almost everywhere in $ S^3_{r_0}$},
 \\ H+2f_\varepsilon\ge 0 \quad\quad&\text{on $\{\theta(x)=\varepsilon\}$}, \\
H- 2f_\varepsilon\geq 0 \quad\quad&\text{on $\{\theta(x)=\pi-\varepsilon\}$},\\
f_\varepsilon\to \cot\theta\quad\quad&\text{in }C^0_{loc}(S^3\setminus\{N,S\}) \text{ as }\varepsilon\to0,
 \end{split}
\end{align}
where $H$ is with respect to the unit outer normal and the constant $C>0$ is independent of $\varepsilon$.
Let $u_\varepsilon$ be the solution to the spacetime harmonic equation
\begin{align}
\begin{cases}
   \Delta u_\varepsilon+3f_{\varepsilon}|\nabla u|=0 \quad\quad&\text{in $S_\epsilon^3$}, \\
   u_\varepsilon=1 \quad\quad&\text{on $\{\theta=\varepsilon\}$},\\
   \;u_\varepsilon=-1 \quad\quad&\text{on $\{\theta=\pi-\varepsilon\}$}.
   \end{cases}
\end{align}
As in the previous argument above line \eqref{Hessian LL}, the regular level sets $\Sigma_t$ of $u_\varepsilon$ satisfy $\chi(\Sigma_t)\leq2$, and we obtain the same estimates as in \eqref{integral estimate 4}. 
We can now implement the arguments at the end of the proof of Theorem \ref{Llarull}, taking $\ell$ to be the identity map on $S^3\setminus\{N,S\}$.  It follows that $\ell$ is an isometry, yielding the desired result.





\section{2-Ricci Positive Bands}\label{S:HopfSphere}

In this section we study $3$-dimensional bands with positive $2$-Ricci curvature. In the preceding three sections, the integral inequality of Lemma \ref{integralformula} was fundamental in analysing the positive scalar curvature condition. The following lemma provides a modification of this integral inequality suited to the positive $2$-Ricci curvature condition.

\begin{lemma}\label{l:2Ricidentity}
Let $(M^3,\partial_\pm M^3,g)$ be a $3$-dimensional Riemannian band, and let $f\in \mathrm{Lip}(M^3)$.  If $u\in C^{2,\alpha}$, $\alpha\in(0,1)$ solves boundary value problem \eqref{e:bandspacetimeharmoniceq}, then 
\begin{align}\label{e:2ricciint}
\begin{split}
   & \int_{\partial_-M^3}|\nabla u|\left(\frac{ 3}{2}f-H\right)dA-\int_{\partial_+M^3}|\nabla u|\left(\frac{3 }{2}f+H\right)dA+\int_{-1}^1 4\pi\chi(\Sigma_t)dt\\
    \geq & \int_{-1}^{1}\int_{\Sigma_t}\left(\frac{|\nabla_\Sigma|\nabla u||^2+(\nabla_{\nu\nu}u+\frac32 f|\nabla u|)^2}{|\nabla u|^2}
    +\left(R-\mathrm{Ric}(\nu,\nu)+\frac{9}{4}f^2-\frac32 \langle \nabla f,\nu\rangle\right)\right)dA dt
\end{split}
\end{align}
where $\nu=\frac{\nabla u}{|\nabla u|}$ on regular level sets $\Sigma_t$, and $H$ is the mean curvature of the boundary with respect to the unit outward normal.
\end{lemma}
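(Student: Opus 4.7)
The plan is to establish \eqref{e:2ricciint} by an integrated Bochner identity combined with Gauss--Bonnet on level sets, the $\geq$ sign being produced by the standard $\psi_\varepsilon$-regularization of the critical set of $u$. The starting observation is that the boundary functional
\[
A:=\int_{\partial_-M^3}|\nabla u|(\tfrac{3}{2}f-H)\,dA-\int_{\partial_+M^3}|\nabla u|(\tfrac{3}{2}f+H)\,dA
\]
coincides via the divergence theorem with $\int_M\Delta|\nabla u|\,dV+\tfrac{3}{2}\int_M\div(f\nabla u)\,dV$. To verify this I would use $\mathbf{n}=\pm\nu$ on $\partial_\pm M^3$, the Dirichlet data $\nabla_{\mathbf{n}}u=\pm|\nabla u|$, and the identity $\nabla_{\nu\nu}u=-(H_\Sigma+3f)|\nabla u|$ which follows from $\Delta u=H_\Sigma|\nabla u|+\nabla_{\nu\nu}u$ combined with the spacetime harmonic equation. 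The specific coefficient $\tfrac{3}{2}$ is forced by the requirement that the resulting $f$-coefficients in the boundary integrand match those of $A$ exactly.

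Next, I would apply Bochner's formula to obtain
\[
\Delta|\nabla u|=\frac{|\nabla^2u|^2-|\nabla|\nabla u||^2}{|\nabla u|}+|\nabla u|\Ric(\nu,\nu)-3\langle\nabla f,\nabla u\rangle-3f\nabla_{\nu\nu}u,
\]
and use the coarea formula to pass to iterated integrals over level sets. In an orthonormal frame $\{e_1,e_2,\nu\}$ adapted to $\Sigma_t$, writing $h_{ij}=\nabla^2u(e_i,e_j)$ one has the pointwise split $|\nabla^2u|^2-|\nabla|\nabla u||^2=\sum_{i,j=1}^{2}h_{ij}^2+|\nabla_\Sigma|\nabla u||^2$, which together with the formula $h_{ij}=|\nabla u|II_\Sigma(e_i,e_j)$ identifies the tangential-Hessian piece with $|II_\Sigma|^2|\nabla u|^2$. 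Simultaneously, I would use Gauss--Bonnet and the Gauss equation $2K_\Sigma=R-2\Ric(\nu,\nu)+H_\Sigma^2-|II_\Sigma|^2$ to recast $\int_{-1}^{1}4\pi\chi(\Sigma_t)\,dt$ in bulk form; the $|II_\Sigma|^2$ contribution that appears is precisely what absorbs the tangential Hessian piece of the Bochner expansion. After expanding the shifted square $(\nabla_{\nu\nu}u+\tfrac{3}{2}f|\nabla u|)^2=(H_\Sigma+\tfrac{3}{2}f)^2|\nabla u|^2$ on the right of \eqref{e:2ricciint}, the $R$, mixed $fH_\Sigma$, and $\langle\nabla f,\nabla u\rangle$ contributions cancel pairwise between the two sides, and the identity $\sum h_{ij}^2=|II_\Sigma|^2|\nabla u|^2$ closes the Hessian part.

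The principal obstacle is technical: every step above relies on $\nabla u\neq 0$, whereas $u$ will generally have critical points. Following the strategy used in the proof of Lemma~\ref{L:Meyer integral formula}, I would replace $|\nabla u|$ by $\psi_\varepsilon=\sqrt{|\nabla u|^2+\varepsilon^2}$ throughout and perform the entire derivation at the $\varepsilon>0$ level, producing a one-parameter family of exact identities with lower-order error terms controlled uniformly by $\|u\|_{C^{2,\alpha}}$. Using that the critical set $\{\nabla u=0\}$ has Hausdorff codimension at least $2$, that $t\mapsto\chi(\Sigma_t)$ is measurable as in Remark~\ref{alkjflkjah}, and the version of Sard's theorem applicable to $C^{2,\alpha}$ functions, I would pass to the limit $\varepsilon\to 0$: the dominated convergence theorem handles the curvature and linear-in-$f$ terms, while Fatou's lemma applied to the nonnegative Hessian square on the right produces the inequality \eqref{e:2ricciint} as stated.
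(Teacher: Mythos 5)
Your plan is essentially the paper's own proof, and your algebra is correct. Both arguments combine the Bochner formula for $|\nabla u|$, the twice-contracted Gauss equation $2K = R - 2\Ric(\nu,\nu) + H_{\Sigma}^2 - |II|^2$ for level sets (the key observation borrowed from Zhu), Gauss--Bonnet, the coarea formula, and a regularization of $|\nabla u|$. The only presentational difference is the order: the paper substitutes the Gauss equation into the Bochner formula to eliminate $\Ric(\nu,\nu)$ at the pointwise level and only then integrates, whereas you integrate the Bochner identity first and then match terms against the Gauss-Bonnet bulk form of $\int 4\pi\chi(\Sigma_t)\,dt$. Your verification that $H_{\Sigma} = -(a+3f)$ with $a = \nabla_{\nu\nu}u/|\nabla u|$ makes all remaining terms cancel ($H_{\Sigma}^2 - a^2 - 6fa - 9f^2 = 0$) is correct, and the decomposition $|\nabla^2u|^2 - |\nabla|\nabla u||^2 = |\nabla u|^2|II_\Sigma|^2 + |\nabla_\Sigma|\nabla u||^2$ is right.

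Where your proposal is lighter than the actual proof is the treatment of critical values, and this is a real (if small) gap. You invoke the $\psi_\varepsilon$-template of Lemma~\ref{L:Meyer integral formula}, but that argument involves only Hessian and Ricci terms and holds pointwise across the whole manifold after regularization. Here the Gauss equation and the identification of $K$ with $\chi(\Sigma_t)$ via Gauss--Bonnet are only meaningful on regular level sets, so $\psi_\varepsilon$-regularization alone does not suffice. The paper's proof of this lemma therefore works instead with a decomposition $[-1,1] = \mathcal{A}_i \sqcup \mathcal{B}_i$, where $\mathcal{A}_i$ is an open set of small measure containing the critical values; the pointwise identity is integrated only over $u^{-1}(\mathcal{B}_i)$, the bulk integral over $u^{-1}(\mathcal{A}_i)$ is bounded separately by elementary estimates, and the limit $i\to\infty$ is taken. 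Crucially, to control $\limsup_{i}\int_{\mathcal{B}_i}\chi(\Sigma_t)\,dt \le \int_{-1}^1 \chi(\Sigma_t)\,dt$ one needs the reverse Fatou lemma, which requires an a priori uniform upper bound on $\chi(\Sigma_t)$; this is exactly what Lemma~\ref{topangnha} supplies (a topological bound on the number of spherical components). Your proposal cites Sard's theorem, the codimension-$2$ bound on the critical set, and measurability of $t\mapsto\chi(\Sigma_t)$, but not this upper bound, and without it the passage to the limit of the Euler characteristic term does not close.
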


\begin{proof}
The calculation is similar to the proof of Lemma \ref{integralformula}. The key here is the following unusual application of the Gauss and Codazzi equations, first known to the present authors from \cite{Zhu}. Along regular $u$-level sets
\begin{equation}\label{e:riccigauss}
\Ric(\nu,\nu)=R-\Ric(\nu,\nu)+H^2-|II|^2-2K,
\end{equation}
where $II$, $H$, and $K$ denote the second fundamental form, mean curvature, and Gauss curvature respectively, of the level sets.
Combining \eqref{e:riccigauss} with the fact that $II=\tfrac{\nabla^2 u|_\Sigma}{ |\nabla u|}$ and $\Delta u+3f|\nabla u|=0$, yields
\begin{align}\label{e:hopfbochner1}
\begin{split}
    &\Ric(\nu,\nu)\\
    =&R-\Ric(\nu,\nu)+\frac1{|\nabla u|^2}(\Delta u-\nabla_{\nu\nu}u)^2-\frac{|\nabla^2 u|_\Sigma|^2}{|\nabla u|^2}-2K\\
    =&R-\Ric(\nu,\nu)+\frac1{|\nabla u|^2}(\Delta u-\nabla_{\nu\nu}u)^2-\frac1{|\nabla u|^2}(|\nabla^2u|^2-2|\nabla |\nabla u||^2+(\nabla_{\nu\nu}u)^2)-2K\\
     =&R-\Ric(\nu,\nu)+\frac1{|\nabla u|^2}\left((\Delta u)^2-2\Delta u \nabla_{\nu\nu}u-|\nabla^2u|^2+2|\nabla |\nabla u||^2\right)-2K\\
    =&R-\Ric(\nu,\nu)+9f^2+\frac6{|\nabla u|}f \nabla_{\nu\nu}u-\frac1{|\nabla u|^2}(|\nabla^2u|^2-2|\nabla |\nabla u||^2)-2K
    \end{split}
\end{align}
along regular level sets.

In order to deal with critical points of $u$, let $\delta$ be a positive parameter and consider the quantity $\varphi=\sqrt{|\nabla u|^2+\delta}$. Calculating with the Bochner formula,
\begin{align}
\begin{split}
    \Delta \varphi=&\frac{1}{\varphi} \left[\frac{1}{2}\Delta|\nabla u|^2-\frac{|\nabla u|^2}{\varphi^2}|\nabla|\nabla u||^2 \right]\\
    \ge&\frac1{\varphi}(|\nabla^2u|^2+\Ric(\nabla u,\nabla u)-|\nabla |\nabla u||^2+\langle \nabla u,\nabla \Delta u\rangle)\\
    =&\frac1{\varphi}(|\nabla^2u|^2+\Ric(\nabla u,\nabla u)-|\nabla |\nabla u||^2-3|\nabla u|\langle \nabla u,\nabla f\rangle-3f|\nabla u|\nabla_{\nu\nu}u) 
     \label{varphi}.
     \end{split}
\end{align}
Combining \eqref{e:hopfbochner1} and \eqref{varphi} gives
\begin{align}\label{e:hopfbochner2}
\begin{split}
    \Delta \varphi\ge&\frac1{\varphi}(|\nabla^2u|^2-|\nabla |\nabla u||^2-3|\nabla u|\langle \nabla u,\nabla f\rangle-3f|\nabla u|\nabla_{\nu\nu}u)\\
&+\frac{|\nabla u|^2}{\varphi}\left(R-\Ric(\nu,\nu)+9f^2+\frac6{|\nabla u|}f \nabla_{\nu\nu}u-\frac1{|\nabla u|^2}(|\nabla^2u|^2-2|\nabla |\nabla u||^2)-2K\right)\\
=&\frac1{\varphi}(|\nabla |\nabla u||^2-3|\nabla u|\langle \nabla u,\nabla f\rangle+3f|\nabla u|\nabla_{\nu\nu}u)+\frac{|\nabla u|^2}{\varphi}\left(R-\Ric(\nu,\nu)+9f^2-2K\right).
\end{split}
\end{align}
Moreover, inserting the identity
\begin{align}
    \frac92f^2|\nabla u|=-\frac32f\Delta u=-\frac32\div(f\nabla u)+\frac32\langle \nabla f,\nabla u\rangle
\end{align}
into \eqref{e:hopfbochner2} leads to 
\begin{align}\label{e:hopfbochner3}
\begin{split}
\Delta\varphi\ge&\frac1{\varphi}\left(|\nabla |\nabla u||^2-\frac32|\nabla u|\langle \nabla u,\nabla f\rangle+3f|\nabla u|\nabla_{\nu\nu}u\right)
\\&+\frac{|\nabla u|^2}{\varphi}\left(R-\Ric(\nu,\nu)+\frac92f^2-2K\right)-\frac{3|\nabla u|}{2\varphi}\div(f\nabla u).
\end{split}
\end{align}
Finally, we calculate the square
\begin{equation}
    \left(\nabla_{\nu\nu}u+\frac32f|\nabla u|\right)^2=(\nabla_{\nu\nu}u)^2+\frac94f^2|\nabla u|^2+3f|\nabla u|\nabla_{\nu\nu}u,
\end{equation}
and combine with \eqref{e:hopfbochner3} to arrive at the primary pointwise identity
\begin{align}\label{eq:K}
\begin{split}
\Delta\varphi\ge&\frac1{\varphi}\left(|\nabla_\Sigma|\nabla u||^2+\left(\nabla_{\nu\nu}u+\frac32f|\nabla u|\right)^2-\frac32|\nabla u|\langle \nabla u,\nabla f\rangle\right)\\
&+\frac{|\nabla u|^2}{\varphi}\left(R-\Ric(\nu,\nu)+\frac94f^2-2K\right)-\frac{3|\nabla u|}{2\varphi}\div(f\nabla u).
\end{split}
\end{align}

The next step is to integrate \eqref{eq:K}, and take $\delta\to0$. Since \eqref{eq:K} only holds along regular level sets, this process is delicate. However, a similar process is carried out in \cite{HKK} and \cite{Stern}, so we will be brief. Let $\mathcal{A}\subset[-1,1]$ be an open set containing the critical values of $u$ and let $\mathcal{B}\subset[-1,1]$ be its compliment. Integrate \eqref{eq:K} over $u^{-1}(\mathcal{B})$ to find
\begin{align} \label{integral 1}
\begin{split}
&\int_{u^{-1}(\mathcal{B})}\left(\Delta\varphi+\frac{3|\nabla u|}{2\varphi}\div(f\nabla u)\right)dV
\\ \ge&\int_{u^{-1}(\mathcal{B})}\frac1{\varphi}\left(|\nabla_\Sigma|\nabla u||^2+\left(\nabla_{\nu\nu}u+\frac32f|\nabla u|\right)^2\right)dV
\\&+\int_{u^{-1}(\mathcal{B})}\frac{|\nabla u|^2}{\varphi} \left(R-\Ric(\nu,\nu)+\frac{9}{4}f^2 -2K-\frac{3}{2}\langle\nabla f,\nu\rangle\right)dV.
\end{split}
\end{align}
To help control the integral over $u^{-1}(\mathcal{A})$, we return to $\Delta\varphi$ and estimate it in a different way. Along regular level sets we may use \eqref{varphi} and $|\nabla ^2u|_\Sigma|^2\ge \frac{1}{2}|\Tr(\nabla^2u|_\Sigma)|^2$ to obtain
\begin{align}\label{e:badset1}
\begin{split}
\Delta \varphi\ge&\frac1{\varphi}(|\nabla^2u|^2+\Ric(\nabla u,\nabla u)-|\nabla     |\nabla u||^2-3|\nabla u|\langle \nabla u,\nabla f\rangle-3f|\nabla                 u|\nabla_{\nu\nu}u)\\ 
\ge &\frac{1}{\varphi}(|\nabla^2u|_\Sigma|^2-C_1|\nabla u|^2-3|\nabla u|^2 |\nabla f|-3f|\nabla u|\nabla_{\nu\nu}u)\\ 
\ge & \frac{1}{\varphi}\left(\frac{1}{2}(\nabla_{\nu\nu} u+3f|\nabla u|)^2-3f|\nabla u|\nabla_{\nu\nu}u\right)-(C_1+3|\nabla f|)|\nabla u|\\ 
\ge& -C_2|\nabla u|,
\end{split}
\end{align}
where $C_1$ and $C_2$ are positive constants depending only on $|\Ric|$ and $|\nabla f|$. Similarly, the other integrand on the left-hand side of \eqref{integral 1} can be controlled by
\begin{align}\label{e:badset2}
    \div(f\nabla u)= f \Delta u +\langle \nabla f, \nabla u\rangle
     \ge  -3 f^2 |\nabla u|-|\nabla f||\nabla u|
    \ge -C_3|\nabla u|,
\end{align}
where $C_3>0$ only depends on $|\nabla f|$ and $f$. As a consequence of the coarea formula and Sard's Theorem, we may restrict attention to regular level sets and apply \eqref{e:badset1}, \eqref{e:badset2}, to find
\begin{align}
\begin{split}
\int_{u^{-1}(\mathcal{A})} \left(\Delta\varphi+\frac{3}{2}\div(f\nabla u)\right)dV=& \int     _{\mathcal{A}}\int_{\Sigma_t}\frac{1}{|\nabla u|}\left(\Delta\varphi+\frac{3}{2}\div(f\nabla u)\right)dAdt
\\ 
\ge &-\int_{\mathcal{A}} \int_{\Sigma_t} \left(C_2+\frac{3C_3}{2}\right)dAdt \\
=&-\int_{u^{-1}(\mathcal{A})}\left(C_2+\frac{3C_3}{2}\right)|\nabla u|dV.
\end{split}
\end{align}
Therefore, a further application of \eqref{e:badset1}, \eqref{e:badset2} shows that
\begin{align}
\begin{split}
   & \int_{u^{-1}(\mathcal{B})}\left(\Delta\varphi+\frac{3|\nabla u|}{2\varphi}\div(f\nabla u)\right)dV \\ 
 \le& \int_{M^3} \left(\Delta\varphi+\frac{3}{2}\div(f\nabla u)\right)dV + \int_{u^{-1}(\mathcal{A})} \left(C_2+\frac{3C_3}{2}\right)|\nabla u|dV\\
 &+\int_{u^{-1}(\mathcal{B})} \frac{3C_3}{2}\left(1-\frac{|\nabla u|}{\varphi}\right)|\nabla u|dV.
    \label{third term}
    \end{split}
\end{align}

Let us now integrate the first term on the right-hand side in \eqref{third term} by parts, apply \eqref{integral 1}, and rearrange the inequality to find
\begin{align}
\begin{split}
& \int_{\partial M^3} \left(\frac{|\nabla u|}{\varphi}\mathbf{n}(|\nabla u|)+\frac{3}{2}f\mathbf{n}( u) \right)dA\\
  \ge & \int_{u^{-1}(\mathcal{B})}\frac1{\varphi}\left(|\nabla_\Sigma|\nabla u||^2+\left(\nabla_{\nu\nu}u+\frac32f|\nabla u|\right)^2\right)dV\\
  &+\int_{u^{-1}(\mathcal{B})}\frac{|\nabla u|^2}{\varphi}\left(R-\Ric(\nu,\nu)+\frac{9}{4}f^2-2K-\frac{3}{2}\langle \nabla f,\nu\rangle\right)dV\\
  &-\int_{u^{-1}(\mathcal{A})} \left(C_2+\frac{3C_3}{2}\right)|\nabla u|dV -\int_{u^{-1}(\mathcal{B})} \frac{3C_3}{2}\left(1-\frac{|\nabla u|}{\varphi}\right)|\nabla u| dV,
  \end{split}\label{e:ibpgood}
\end{align}
where $\mathbf{n}$ denotes the unit outward normal to $\partial M^3$.
This inequality will be applied with a sequence $\mathcal{A}_i\subset[-1,1]$ such that $\lim_{i\to\infty}|\mathcal{A}_i|=0$, which is permissible by Sard's Theorem, and with $\mathcal{B}_i=\mathcal{A}^c_i$. Notice that first taking $\delta\to0$ ensures that the last term of \eqref{e:ibpgood} tends to zero. Furthermore, by the coarea formula and the Gauss-Bonnet Theorem we find
\begin{align}
    \lim_{\delta\to0}\int_{u^{-1}(\mathcal B_i)}\frac{|\nabla u|^2}\varphi K dV =\int_{\mathcal B_i} 2\pi\chi(\Sigma_t)dt.
\end{align}
Now observe that Lemma \ref{topangnha} implies that the Euler characteristic of a regular level set $\chi(\Sigma_t)$ is bounded from above uniformly in $t$. This allows us to apply the Reverse Fatou's Lemma. To explain this step, consider the function $F_i(t)$ which takes the value $\chi(\Sigma_t)$ if $t\in\mathcal{B}_i$, and $0$ otherwise. Arguing as in Remark \ref{alkjflkjah}, the functions $F_i$ are measurable, and hence
 \begin{align}\label{e:limiteul}
     \limsup_{i\to \infty}\int_{\mathcal{B}_i}\chi(\Sigma_t)dt =\limsup_{i\to \infty}\int_{-1}^1  F_i(t)dt
     \le \int^1_{-1}\limsup_{i\to\infty} F_i(t) dt
     = \int^1_{-1}  \chi(\Sigma_t)dt.
 \end{align}
To finish, take a limsup in $i$ of \eqref{e:ibpgood} and use Fatou's Lemma for the first term on the right-hand side, as well as the dominated convergence theorem for all remaining integrals. 
Lastly, noting that $\mathbf{n}=\pm\nu$ on $\partial_\pm M^3$ in the boundary term, and using the spacetime harmonic equation as in the proof of Lemma \ref{L:Meyer integral formula}, we arrive at the desired integral identity.
\end{proof}

\subsection{Proof of Main Theorem \ref{HopfSphereBoundary}}

Let $(M^3,\partial_\pm M^3,g)$ be as in the hypotheses of Main Theorem \ref{HopfSphereBoundary}, and suppose that the width $w=d(\partial_-M^3,\partial_+M^3)$ satisfies $w\geq\arctan(H_0/2)$. If $H_0\leq0$, one may follow the arguments below with $f\equiv 0$ to obtain a contradiction. Therefore, we will now assume that $H_0>0$ and choose $f$ in a different way. Namely set  
\begin{equation}
    \tilde{f}(\tau)=\begin{cases}
   \frac{4}{3} \tan(2\tau-\text{arctan}(H_0/2))&\text{ if }\tau\leq \arctan(H_0/2),\\
    L(\tau)&\text{ otherwise},
    \end{cases}
\end{equation}
where $L$ is the unique $1$-variable linear function making $\tilde{f}$ a $C^1$ function, and define the Lipschitz function $f(x)=\tilde{f}(r(x))$ where we use the notation $r(x)=d(x,\partial_-M^3)$. This choice of $f$ ensures that
\begin{align} \label{e:2Ricband1}
    4+\frac{9}{4}f^2-\frac{3}{2}|\nabla f|\ge 0
\end{align}
holds almost everywhere. Observe that $H\pm \frac{3}{2}f\ge 0$ on $\partial_\pm M^3$, and this is a strict inequality at some point of $\partial M^3$ unless $r\equiv\arctan(H_0/2)$ on $\partial_+M^3$ and $H\equiv -H_0$ holds across $\partial M^3$. As usual, let $u$ be the unique spacetime harmonic function associated with this $f$ such that $u=\pm 1$ on $\partial_{\pm} M$, which is guaranteed by Proposition \ref{p:existence}. 
Since it is assumed that $M^3$ has no spherical classes, the Euler characteristic of any homologically nontrivial surface is nonpositive, and in particular $\chi(\Sigma_t)\leq 0$ for all regular level sets of $u$.

By combining all of the above observations, Lemma \ref{l:2Ricidentity} implies that
\begin{equation}\label{e:2RicBand2}
    0\geq \int_{-1}^1\int_{\Sigma_t}\left(\frac{|\nabla_\Sigma|\nabla u||^2+(\nabla_{\nu\nu}u+\frac32 f|\nabla u|)^2}{|\nabla u|^2}+R-\mathrm{Ric}(\nu,\nu)-4\right)dAdt,
\end{equation}
where we have used the Cauchy-Schwarz inequality $\langle\nabla f,\nu\rangle\le |\nabla f|$. 
Since $R-\mathrm{Ric}(\nu,\nu)$ is at least $4$, the right-hand side of \eqref{e:2RicBand2} is nonnegative. It follows that the boundary term of Lemma \ref{l:2Ricidentity} vanishes and, as discussed above, the width estimate $w\leq\arctan(H_0/2)$ follows.

Now assume that $\mathrm{Ric}\geq2g$, and $w=\arctan(H_0/2)$. Let us collect all the information gained from attaining equality in the inequalities leading to the width estimate above. Inspecting the integrand of \eqref{e:2RicBand2}, we find that 
\begin{align}\label{e:hopfbandrigid}
    \nabla_\Sigma|\nabla u|\equiv 0,\qquad \nabla_{\nu\nu}u+\frac32f|\nabla u|\equiv 0,\qquad R-\Ric(\nu,\nu)=4,
\end{align}
whenever $|\nabla u|\neq0$. As is typical at this stage, $\nabla u$ is actually nonzero everywhere. Indeed, by the Hopf Lemma and Dirichlet conditions, $\mathbf{n}(u)=\pm|\nabla u|$ is nonzero on $\partial_\pm M^3$. To see that $\nabla u\neq0$ on the interior of $M^3$, the first two equations of \eqref{e:hopfbandrigid} can be used to consider an ODE for $|\nabla u|$ along a curve connecting a boundary point to any interior point, allowing us to conclude that it is impossible for $|\nabla u|$ to become zero. Moving along, from equality in line \eqref{e:2RicBand2}, we know that $\nabla f$ is parallel to $\nabla u$ wherever $\nabla f$ exists. It follows that $\nabla r=\nu$ almost everywhere. Since $\nu$ is $C^{1,\alpha}$, $r$ cannot have any critical points. Lastly, since the boundary term of \eqref{e:2ricciint} must vanish, we know that the boundary mean curvature takes the constant value $H\equiv -H_0$.

Now we investigate the consequences of the above information. 
Consider an orthonormal frame $\{e_1,e_2,e_3\}$ where $e_3=\nabla r$. Greek indices will be reserved for $e_1,e_2$, while Latin indices will be used when referring to all three vector fields.
Because $\nabla r=\nu$, it holds that $u$ can be viewed as a function of $r$. Let $H_r$ be the mean curvature of the level set of $r$ with respect to $\nabla r$, then using $\nabla_{\nu\nu} u+\frac{3}{2}f|\nabla u|=0$, we obtain
\begin{equation} \label{Hr}
    H_r=\frac{\Delta u-\nabla_{\nu\nu} u}{|\nabla u|}=-\frac{3}{2}f=-2\tan \left(2r-\arctan\frac{H_0}{2}\right).
\end{equation}
Moreover, since $\Ric\ge2g$ and $R-\mathrm{Ric}_{33}=4$, we find that $\Ric_{11}=2$ and $\Ric_{22}=2$. 
It follows that
\begin{equation}
  2(1+\varepsilon^2)\le  \Ric(e_\alpha+\varepsilon e_3, e_\alpha+\varepsilon e_3)=2+ 2\varepsilon \Ric_{\alpha 3}+\varepsilon^2\Ric_{33}
\end{equation}
for any $\varepsilon\in \R$, which forces $\Ric_{\alpha 3}=0$ for $\alpha =1,2$. A similar argument shows that $\Ric_{12}=0$. 

Next, the contracted second Bianchi identity and $R-\Ric_{33}=4$ imply that
\begin{equation}\label{e:riccicalc1}
    \nabla_3\Ric_{33}+\nabla_\alpha\Ric_{3\alpha}=\frac12e_3(R)=\frac12 e_3(\Ric_{33}).
\end{equation}
Furthermore, using the facts that $\langle \nabla_{e_3}e_3,e_3\rangle=0$ and $\Ric_{3\alpha}=0$, we have
\begin{align}\label{e:riccicalc2}
    \nabla_3 \Ric_{33}= e_3(\Ric_{33})-2\Ric (\nabla_{e_3}e_3, e_3)=e_3(\Ric_{33}).
\end{align}
Since $\langle \nabla_{e_\alpha}e_3,e_\alpha\rangle=-\langle \nabla_{e_\alpha}e_\alpha,e_3\rangle=H_r$, the previous calculations of the Ricci tensor produce
\begin{align}\label{e:riccicalc3}
\begin{split}
    \nabla_\alpha \Ric_{3\alpha}=&e_\alpha (\Ric_{3\alpha})-\Ric(\nabla_{e_\alpha}e_3,e_\alpha)- \Ric(e_3,\nabla_{e_\alpha}e_{\alpha})
    \\=&-2H_r+H_r\Ric_{33}.
\end{split}
\end{align}
Combining \eqref{e:riccicalc1}, \eqref{e:riccicalc2}, and \eqref{e:riccicalc3} then shows that $\Ric_{33}$ satisfies the equation
\begin{align}\label{h ODE}
    \partial_r\Ric_{33}+2H_r\Ric_{33}=4H_r.
\end{align}

It will now be established that $\Ric_{33}$ is constant on level sets. To see this, first observe that
\begin{align}
\begin{split}
    \frac{1}{2}e_1(\Ric_{33})=&\frac{1}{2}e_1(R)
    \\=&\nabla_{e_3}\Ric_{13}+\nabla_{\alpha}\Ric_{1\alpha}
    \\=&-\Ric(\nabla_{e_3}e_1,e_3)-\Ric(e_1,\nabla_{e_3}e_3)-\Ric(\nabla_{e_\alpha}e_1,e_\alpha)-\Ric(e_1,\nabla_{e_\alpha}e_\alpha).
\end{split}
\end{align}
Since
\begin{equation}
    \langle e_1,\nabla_{e_3}e_3\rangle=\left\langle e_1,\nabla_{e_3}\frac{\nabla u}{|\nabla u|}\right\rangle=\frac{\nabla^2 u(e_1,e_3)}{|\nabla u|}=\frac{\nabla_{e_1}|\nabla u|}{|\nabla u|}=0,
\end{equation}
it follows from calculations above that $\Ric(\nabla_{e_3}e_1,e_3)=0$ and $\Ric(e_1,\nabla_{e_3}e_3)=0$.
Furthermore
\begin{align}
    \Ric(\nabla_{e_\alpha}e_1,e_\alpha)+\Ric(e_1,\nabla_{e_\alpha}e_\alpha)=
    \Ric_{22}\langle \nabla_{e_2}e_1,e_2\rangle+
    \Ric_{11}\langle e_1,\nabla_{e_2}e_2\rangle
    =2\nabla_{e_2}\langle e_1,e_2\rangle=0.
\end{align}
Therefore $e_1(\Ric_{33})=0$, and similarly $e_2(\Ric_{33})=0$, yielding the desired constancy.

Using \eqref{Hr}, \eqref{h ODE}, and $\Ric_{33}\ge 2$, we now obtain $\Ric_{33}=2+c_0\sec^2(2r-\arctan\frac{H_0}{2})$ for some constant $c_0\ge0$. 
Subtracting the identities
\begin{align}
2=\Ric_{11}&=\sigma(e_1,e_2)+\sigma(e_1,e_3),\\
2=\Ric_{22}&=\sigma(e_1,e_2)+\sigma(e_2,e_3),
\end{align}
produces a coincidence of sectional curvatures $\sigma(e_1, e_3)=\sigma(e_2, e_3)$ and, subsequently $\Ric_{33}=2\sigma(e_1, e_3)$. Introducing the shifted coordinate $\rho=r-\frac{1}{2}\arctan\frac{H_0}{2}$, we will abuse notation and use $\Sigma_\rho$ to denote the level sets of $\rho$. As a consequence of the above, we obtain a simple expression for the sectional curvature 
\begin{equation}\label{e:sec13}
\sigma(Y, e_3)=\frac12 \Ric_{33}=1+\frac{1}{2}c_0\sec^2(2\rho),\quad\quad  Y\in T\Sigma_\rho.
\end{equation}

Following \cite[Alternative Proof of Corrollary 1.7]{Zhu}, we consider the Riccati equation
\begin{equation}\label{Ricatti}
   \partial_\rho (\nabla^2 \rho)+(\nabla^2 \rho)^2=- R(\cdot,\nabla \rho)\nabla \rho
\end{equation}
where, crucially, $\nabla^2\rho$ is viewed as an endomorphism of $T^*M^3$ and $(\nabla^2 \rho)^2$ is a composition of endomorphisms.
Let $\lambda_1\le \lambda_2$ denote the principle curvatures of $\Sigma_\rho$, considered as functions on $M^3$. By  \eqref{e:sec13} and \eqref{Ricatti}, their restrictions to an integral curve of $\nabla\rho$ satisfy 
\begin{equation}\label{e:eigenvalueeq}
    \lambda_\alpha'+\lambda_\alpha^2=-1-\frac{1}{2}c_0\sec^2(2\rho),\quad \alpha=1,2,
\end{equation}
where $'$ denotes the $\partial_\rho$ derivative.
Note that \eqref{Hr} implies
\begin{equation}\label{e:meancurv}
\lambda_1+\lambda_2=H_r=-2\tan (2\rho),
\end{equation}
and thus $\lambda_1'+\lambda_2'=-4\sec^2(2\rho)$. 
Summing \eqref{e:eigenvalueeq} with both $\alpha=1,2$, we are lead to
\begin{equation}\label{e:secondfundamental}
    |II|^2=\lambda_1^2+\lambda_2^2=4\sec^2(2\rho)-2-c_0\sec^2(2\rho).
\end{equation}
As a consequence of \eqref{e:meancurv} and \eqref{e:secondfundamental}, $c_0\leq2$. 
Equations \eqref{e:meancurv} and \eqref{e:secondfundamental} also allow one to algebraically solve for the principal curvatures
\begin{equation} \label{e:eigenvalue express}
\lambda_1=-\tan(2\rho)-\sqrt{1-\frac{c_0}{2}}\sec(2\rho),\quad\quad     \lambda_2=-\tan(2\rho)+\sqrt{1-\frac{c_0}{2}}\sec(2\rho),   
\end{equation}
as long as $c_0\le2$.
At this point we can apply the twice contracted Gauss equations on the surfaces $\Sigma_\rho$, to compute the Gauss curvature
\begin{equation}\label{sec7Gaussian}
\begin{split}
    K=&\frac{R}{2}-\Ric_{33}-\frac{|II|^2}{2}+\frac{H_r^2}{2}\\
    =&2-\frac12\left(2+c_0\sec^2(2\rho)\right)
    -\frac12\left(4\sec^2(2\rho)-2-c_0\sec^2(2\rho)\right)+2\tan^2 (2\rho)
    \\=&2+2\tan^2 (2\rho)-2\sec^2(2\rho)=0.
\end{split}
\end{equation}

It will be useful to consider two cases. If $c_0=2$, then \eqref{e:eigenvalue express} shows that each level set $\Sigma_\rho$ is umbilic. Then $g$ splits as $g=d\rho^2+g_\rho$, and we have the equation $\partial_\rho g_\rho=-2\tan(2\rho)g_\rho$. Integrating this equation yields $g_\rho=\cos(2\rho)g_0$, where $g_0$ is a flat metric on the torus. It follows that, upon passing to the universal cover, $(M^3,g)$ takes the form \eqref{e:g'metric} for $\delta=1$.

For the rest of the proof, assume that $c_0\in[0,2)$. In this case, $\lambda_1< \lambda_2$ and the level sets are never umbilic. Let us now work on the universal cover $\widetilde{M^3}$ where the level sets $\Sigma_\rho$ are lifted to planar surfaces $\widetilde{\Sigma_\rho}$. On $\widetilde{M^3}$, we may choose a new global orthonormal frame, also denoted by  $\{e_1,e_2,e_3=\nabla\rho\}$, such that $e_\alpha$ is the principal direction corresponding to $\lambda_\alpha$, for $\alpha=1,2$. Such a global frame exists since $\widetilde{\Sigma_\rho}$ is contractible and thus the eigenspaces of $II$ form trivial bundles.
Since $\widetilde{\Sigma_0}$ is nonumbilic, near any point we may find a local coordinate chart $\{x^1,x^2,\rho\}$ so that $\partial_1,\partial_2$ are orthogonal to $\nabla \rho$ and, along $\widetilde{\Sigma_0}$, $x^1$ and $x^2$ are lines of curvature coordinates, which is to say  
$\partial_1=c_1 e_1$ and $\partial_2= c_2 e_2$ for some smooth functions $c_1$ and $c_2$ defined locally on $\widetilde{\Sigma_0}$. 
In these coordinates, $g_\rho$ and $II$ satisfy the following evolution equations, written in terms of symmetric $(0,2)$-tensors: 
\begin{align}\label{e:gIIequations}
    \begin{cases}
   \partial_\rho II-II^2=-\left(1+\frac{1}{2}c_0\sec^2(2\rho)\right) g_\rho ,\\ 
   \partial_\rho g_\rho=2II ,\\ 
   g_0=c_1^2(dx^1)^2+c_2^2(dx^2)^2 ,\\ 
   II|_{\Sigma_0}=\lambda_1 c_1^2(dx^1)^2+\lambda_2c_2^2(dx^2)^2 . 
   \end{cases}
\end{align}
Solving \eqref{e:gIIequations} yields the diagonalized expressions
\begin{equation}
    g_\rho=\bar{c}_1^2(d x^1)^2+\bar{c}^2_2 (dx^2)^2,\quad\quad II=\lambda_1 \bar{c}_1^2 (dx^1)^2+\lambda_2 \bar{c}_2^2(dx^2)^2,
\end{equation}
where $\bar c_\alpha(\rho,x_1,x_2)=\phi_\alpha(\rho)c_\alpha(x_1,x_2)$ and the function $\phi_\alpha(\rho)$ satisfies
\begin{equation}
   \partial_\rho\phi_\alpha=\lambda_\alpha\phi_\alpha,\qquad
   \phi_\alpha(0)=1.
\end{equation} 
Using the formula for $\lambda_\alpha$, one finds the following explicit expressions
\begin{align}\label{phi alpha1}
\begin{split}
    \phi_1= &\left[\sec(2\rho)+\tan(2\rho)\right]^{-\frac{1-\Upsilon}{2}}\cos^\frac{1}{2}(2\rho)
    =2^{\frac{1-\Upsilon}{2}}\cos^{1-\Upsilon}(\rho+\frac{\pi}{4})\cos^{\frac{\Upsilon}{2}}(2\rho),\\ 
    \phi_2= &\left[\sec(2\rho)+\tan(2\rho)\right]^{\frac{1-\Upsilon}{2}}\cos^\frac{1}{2}(2\rho)
    =2^{\frac{1-\Upsilon}{2}}\sin^{1-\Upsilon}(\rho+\frac{\pi}{4})\cos^{\frac{\Upsilon}{2}}(2\rho),
\end{split}
\end{align}
where $\Upsilon=1-\sqrt{1-\frac{c_0}{2}}.$


The rest of the proof is devoted to showing that $e_1$ and $e_2$ are coordinate vector fields on $\widetilde{\Sigma_0}$. After accomplishing this, we may choose $x^\alpha$ so that $c_\alpha\equiv1$, giving the desired form of $g$. To this end, on $\widetilde{M^3}$ consider the smooth functions $a,b$ defined by $[e_1,e_2]=ae_1+be_2$.
Denoting the connection on $\widetilde{\Sigma_\rho}$ by  $\hat{\nabla}$, we compute
\begin{equation}
    \hat{\nabla}_{e_1} e_2=a e_1,\quad\hat{\nabla}_{e_2}e_1=-b e_2,\quad \hat{\nabla}_{e_1} e_1=-ae_2,\quad \hat{\nabla}_{e_2}e_2=b e_1. 
\end{equation}
Using these expressions, we may compute the level set curvature $\langle\hat{R}(e_1,e_2)e_1,e_2\rangle$ in terms of $a$ and $b$. Since the level sets are flat, it follows that
\begin{align}
    \begin{split}
      0=&\langle\hat{\nabla}_{e_1}\hat{\nabla}_{e_2}e_1-\hat{\nabla}_{e_2}\hat{\nabla}_{e_1} e_1-\hat{\nabla}_{[e_1,e_2]}e_1,e_2\rangle
    \\=&\langle\hat{\nabla}_{e_1}(-b e_2)-\hat{\nabla}_{e_2}(-ae_2)-\hat{\nabla}_{a e_1+b e_2} e_1, e_2\rangle
    \\=& e_1(-b)+e_2(a)+a^2+b^2.
    \end{split}\label{Gauss ab}
\end{align}
On the other hand, $\partial_\alpha=\bar c_\alpha e_\alpha$ and so
\begin{equation}
    0=[\partial_1,\partial_2]=[\bar c_1 e_1,\bar c_2e_2]= \bar c_1(e_1(\bar c_2))e_2-\bar c_2(e_2(\bar c_1))e_1+\bar c_1\bar c_2(ae_1+b e_2).
\end{equation}
Collecting coefficients in the above produces $\partial_{1}\bar c_2=  \bar c_1e_1(\bar c_2)=-b\bar c_1\bar c_2$ and $\partial_{2}\bar c_1=\bar c_2e_2(c_1)=a\bar c_1\bar c_2$.  Since $\bar c_\alpha(\rho,x_1,x_2)=\phi_\alpha(\rho)c_\alpha(x_1,x_2)$, we obtain
\begin{equation}
    a(\rho,x_1,x_2)=\phi_2^{-1}(\rho)a(0,x_1,x_2),\quad\quad 
    b(\rho,x_1,x_2)=\phi_1^{-1}(\rho)b(0,x_1,x_2).
\end{equation}
Combining this with equation \eqref{Gauss ab} implies
\begin{equation}\label{e:gausspart2}
\begin{split}
    0=&\bar c_1^{\;-1} \partial_{1}(-b)+\bar c_2^{\;-1} \partial_{2}(a)+a^2+b^2
    \\=& \phi_1^{-2}(\rho) c_1^{-1}(x_1,x_2)\partial_{1}(-b(0,x_1,x_2))+\phi_1^{-2}(\rho)b^2(0,x_1,x_2)
    \\&+\phi_2^{-2}(\rho) c_2^{-1}(x_1,x_2)\partial_{2}(a(0,x_1,x_2))+\phi_2^{-2}(\rho)a^2(0,x_1,x_2).
\end{split}
\end{equation}
As $\phi_1\neq \phi_2$, equation \eqref{e:gausspart2} yields the following simple identities for $a$ and $b$ along $\widetilde{\Sigma_0}$, namely
\begin{equation}
    e_1(b)=b^2,\quad\quad
    e_2(a)=-a^2.
\end{equation}
If $a\neq0$ at some point on $\widetilde{\Sigma_0}$, then $e_2(a^{-1})=1$. Because the integral curve $\gamma(t)$ of $e_2$ exists for all time, there exists a point $\gamma(t_0)$ such that $a^{-1}(\gamma(t_0))=0$, which is a contradiction. A similar argument applies to $b$, andhence $a=b\equiv0$. In particular, $[e_1,e_2]=0$ holds on $\widetilde{\Sigma_0}$, finishing the proof.




\subsection{Proof of Main Theorem \ref{t:2RicIncomplete}}

The proof will be similar to that of Main Theorem \ref{Meyer boundary cor}.  Let $\Sigma^{2}\subset M^3$ be the closed surface separating $M^3$ into two connected components $M^3_\pm$, where $E_\pm$ is contained in $M^3_{\pm}$.
Suppose that $w_-+w_+\geq\pi/2$ where $w_{\pm}$ is the minimum $\min\{\pi/2,d(\Sigma^{2},E_\pm)\}$, though we will soon see that the later quantity is always smaller. Consider the signed distance to $\Sigma^2$ given by $\varrho(x)=\pm d(x,\Sigma^2)$, when $x\in M^3_\pm$.
For $\delta>0$, consider the band $({\widetilde{ M^3_\delta}},\partial_\pm \widetilde{ M_\delta^3},g)$ given by
\begin{align}
    \widetilde {M_\delta^3}=
    \{x\in M^3 \mid \varrho(x)\in[-w_-+\delta,w_+-\delta]\},
\end{align}
where the assignment $\partial_\pm M^3_\delta$ respects $E_\pm$. Importantly, $\widetilde{M_\delta^3}$ is compact as a consequence of Lemma \ref{l:compactification}. Let $\widehat{M^3_\delta}$ be the union of $\widetilde{M_\delta^3}$ with the compact components of $M^3\setminus \widetilde{M^3_\delta}$. Notice that each component of $M^3\setminus \widehat{M^3_\delta}$ contains at least one end. By inspecting the long exact sequence of the pair $(M^3,\widehat{M_\delta^3})$ and using the fact that the top homology group of an open manifold is trivial, we find that the inclusion $H_2(\widehat{M^3_\delta})\to H_2(M^3)$ is injective. It follows that there are no spherical classes in $H_2(\widehat{M^3_\delta})$. Moreover, since $\Sigma^2$ separates the nonempty collections $E_\pm$, it may be verified that at least one component of each $\partial_\pm \widetilde{M^3_\delta}$ remains in $\partial \widehat{M^3_\delta}$, and that the distance within $\widehat{M^3_\delta}$ from $\Sigma^2$ to these components is unchanged. As in the proof of Main Theorem \ref{Meyer boundary cor}, there is a small perturbation of $\widehat{M_\delta^3}$ to a band $(M^3_\delta,\partial_\pm M^3_\delta)$ with smooth boundary, no spherical homology, and width at least $(w_-+w_+)-3\delta$. In light of the width estimate in Main Theorem \ref{HopfSphereBoundary}, one can conclude $w_-+w_+=\pi/2$.

Now assume that $\Ric\geq 2g$. Define $H_\delta=\sup_{\partial M_\delta^3} |H|$, where $H$ is the mean curvature of $\partial M^3_\delta$. Slightly adjusting Lemma \ref{construct f} with $a=3/4$, $b=3/8$, and appropriate $\varepsilon$, we find a function $f_\delta\in \mathrm{Lip}(M_\delta^3)$ satisfying the following
\begin{align}
\begin{split}
   4+\frac{9}{4}f_\delta^2-\frac{3}{2}|\nabla f_\delta(x)|\ge-C\delta & \quad\text{in }\mathcal{B},\\
    4+\frac{9}{4}f_\delta^2-\frac{3}{2}|\nabla f_\delta(x)|\ge 0 &\quad
   \text{in }M^n_\delta\setminus \mathcal{B},\\
   3f_\delta \le- 2 H_\delta\quad\text{ on }\partial_- M_\delta^3,\quad 3f_\delta\ge2 H_\delta&\quad \text{on } \partial_+ M_\delta^3,
\end{split}
\end{align}
where $\mathcal{B}=\{-\frac{w_-}{2}\le \varrho(x)\le \frac{w_+}{2}\}$ and $C$ is independent of $\delta$. The existence result Proposition \ref{p:existence} with $f_\delta$, yields a spacetime harmonic function $u_\delta$ on $M^3_\delta$ with Dirichlet boundary conditions. Fix a small $\rho>0$ such that $\mathcal{B}\subset \widetilde{M_\rho^3}$, and a point $p\in \widetilde{M_\rho^3}$. By scaling and adding a constant to $u_\delta$, we arrange for $\sup_{\widetilde{M^3_{\rho}}}|\nabla u_\delta|=1$, and $u_\delta(p)=0$. Integrating $du_\delta$ along paths shows that $|u_\delta|\le \textnormal{diam}(\widetilde{M^3_{\rho}})$ on $\widetilde{M^3_{\rho}}$. 

Next, apply the integral formula Lemma \ref{l:2Ricidentity} and use the boundary conditions of $f_\delta$ to find 
\begin{align}   \label{error hopf}
\begin{split}
0\geq&\int_{\underline{u}_{\delta}}^{\overline{u}_\delta}\int_{\Sigma_t}\left(\frac{|\nabla_\Sigma|\nabla u_\delta||^2+(\nabla_{\nu\nu}u_\delta+\frac32 f_\delta|\nabla u_\delta|)^2}{|\nabla u_\delta|^2}+\left(R-\mathrm{Ric}(\nu,\nu)-4\right)\right)dAdt\\
   {}&-C\delta\int_{\mathcal{B}}|\nabla u_\delta|dV,
\end{split}
\end{align}
where $\underline{u}_\delta=\min_{M^3_\delta}u_\delta$ and $\overline{u}_\delta=\max_{M^3_\delta}u_\delta$.
Since it has been arranged that $|u_\delta|$ and $|\nabla u_\delta|$ are uniformly bounded on the compact set $\widetilde{M_\rho^3}$, Schauder estimates allow us find a subsequential $C^{2,\beta}$-limit of $u_\delta$ on $\widetilde{M_\rho^3}$ as $\delta\to0$, which will be denoted by $u$. The function $u$ solves the spacetime harmonic equation with $f(x)=\frac{4}{3} \tan (2\varrho(x)+w_--w_+)$, and must be nonconstant since $\sup_{\widetilde{M^3_{\rho}}}|\nabla u|=1$. We will reuse the notations $\Sigma_t$ and $\nu$ when referring to the level sets of $u$ and their unit normals, respectively. By Fatou's Lemma and boundedness of $\|u_\delta\|_{C^{2,\beta}}$, taking the limit of \eqref{error hopf} produces
\begin{align}
\begin{split}
0=\int_{\underline{u}}^{\overline{u}}\int_{\Sigma_t\cap \widetilde{M^3_\rho}}\left(\frac{|\nabla_\Sigma|\nabla u||^2+(\nabla_{\nu\nu}u+\frac32 f|\nabla u|)^2}{|\nabla u|^2}+\left(R-\mathrm{Ric}(\nu,\nu)-4\right)\right)dAdt,
 \end{split}
\end{align}
where $\underline{u}=\min_{\widetilde{M^3_\rho}}u$ and $\overline{u}=\max_{\widetilde{M^3_\rho}}u$. It follows that the 
conditions \eqref{e:hopfbandrigid} hold on $\widetilde{M_\rho^3}$ wherever $\nabla u\neq0$, and all regular level sets of $u$ are tori. In fact since $\nabla_\Sigma|\nabla u|\equiv0$, components of level sets of $u$ are either entirely regular or critical. 

The argument at this stage in the proof of Main Theorem \ref{Meyer boundary cor} now shows that $u$ has no critical point within the interior of $\widetilde{M^3_\rho}$.
Consequently, $\nabla u$ is parallel to $\nabla\varrho$ and hence $\varrho$ can be considered as a function of $u$ on $\widetilde{M_\rho^3}$. The conditions \eqref{e:hopfbandrigid} then hold across $\widetilde{M^3_\rho}$, and all level sets are tori. This allows us to directly apply the argument in the proof of Main Theorem \ref{HopfSphereBoundary}, and conclude that $g$ has a very particular form on $\widetilde{M^3_\rho}$. To describe this, first introduce the shifted coordinate $s=\varrho-\frac{w_+-w_-}{2}\in(-\pi/4, \pi/4)$. The arguments of the previous section imply that the universal cover of $(\widetilde{M^3_\rho},g)$ splits as $g=ds^2+\phi_\alpha^2(s)(dx^\alpha)^2$, where $x^1,x^2$ are global coordinates on the lifts of the level sets of $u$ and where $\phi_1,\phi_2$ are the functions given in \eqref{phi alpha1}. Since this splitting is independent of the parameter $\rho$, we find that the universal cover of the region $N^3:=\{x\in M^3\colon\varrho(x)\in(-w_-,w_+)\}$ splits as 
\begin{equation}
   \left( \left(-\frac{\pi}{4},\frac{\pi}{4}\right)\times\mathbb{R}^2, ds^2+\phi^2_\alpha(s)(dx^\alpha)^2\right).
\end{equation}
Since $(N^3,g)$ cannot be extended to a Riemannian manifold with at least two ends, we conclude that $N^3=M^3$, completing the proof.

\subsection{Corollaries \ref{HopfSphere} and \ref{c:linkCor}}

Equipped with the open width estimate of Main Theorem \ref{t:2RicIncomplete}, we are now ready to discuss its applications.

\begin{proof}[Proof of Corollary \ref{HopfSphere}] 
Suppose $\Sigma^2\subset M^3$ is an embedded surface of genus at least $1$. 
Since $M^3$ and $\Sigma^2$ are orientable, the normal bundle of $\Sigma^2$ is two-sided. In particular, the open manifold 
\begin{equation}
    N^3=\{x\in M^3\colon d(x,\Sigma^2)<\textit{Inj}_n(\Sigma^2)\}
\end{equation}
has two ends. Evidently $N^3$ is topologically $\Sigma^2\times\mathbb{R}$, and thus cannot support spherical homology classes. The normal injectivity radius upper bound follows from Main Theorem \ref{t:2RicIncomplete} when applied to the separating surface $\Sigma^2$ in $N^3$. 

Now assume that $\mathrm{Ric}\geq 2g$ and $\textit{Inj}_n(\Sigma^2)\geq\pi/4$. The rigidity statement in Main Theorem \ref{t:2RicIncomplete} implies that the universal cover of $(N^3,g)$ is isometric to $((-\frac\pi4,\frac\pi4)\times \R^2,g_\Upsilon)$ for some $\Upsilon\in[0,1]$, and that $\Sigma^2$ is a torus.
Since $N^3$ lies within the smooth closed manifold $(M^3,g)$ and the curvature of $g_\Upsilon$ is unbounded for $\Upsilon>0$, we must have $\Upsilon=0$. We conclude that $(N^3,g)$ is round and, since $g_0$ collapses along its ends, the closure of $N^3$ is the entire original manifold $M^3$.
Consequently, the universal cover of $(M^3,g)$ is the round sphere.

We claim that the torus $\Sigma^2\subset M^3$ is a Heegaard surface. To see this, observe that $\Sigma^2$ lifts to a two-sided embedded flat and minimal surface in $S^3$, which must therefore be the Clifford torus, see \cite{Brendle2}. In particular, $M^3\setminus \Sigma^2$ has two components, each covered by a solid torus. It follows that $\Sigma^2$ separates $M^3$ into two solid tori.
Now, because lens spaces are the only nontrivial quotients of $S^3$ admitting a genus $1$ Heegaard surface (see for instance \cite[page 5]{Heegaard}), $(M^3,g)$ itself must be isometric to a round sphere or a round sens space.
\end{proof}

\begin{proof}[Proof of Corollary \ref{c:linkCor}]
First we show there are no spherical classes in $H_2(M^3\setminus (K_1\cup K_2);\mathbb{Z})$. We argue by contradiction and assume that such a class exists. This means that there is an embedded and homologically nontrivial sphere $\mathcal{S}^2\subset M^3\setminus (K_1\cup K_2)$. Since $M^3$ is a rational homology sphere, and codimension $1$ integer homology of oriented manifolds is torsion free, the surface $\mathcal{S}^2$ bounds a region $U\subset M^3$. The $2$-sidedness of $\mathcal{S}^2$ and nontriviality of its class in $H_2(M^3\setminus (K_1\cup K_2);\mathbb{Z})$ imply that exactly one of the two knots must lie entirely in $U$. Meanwhile, an argument using the Mayer–Vietoris sequence and the triviality of $H_1(\mathcal{S}^2;\mathbb{Q})$, shows that both $U$ and $M^3\setminus U$ are rational homology $3$-balls. It follows that, up to integer multiples, each knot bounds an integer $2$-chain contained entirely in its respective rational homology $3$-ball, and therefore does not intersect the other knot. This contradicts the linking condition, and we conclude that $M^3\setminus(K_1\cup K_2)$ satisfies the homological hypothesis of Main Theorem \ref{t:2RicIncomplete}.

Let $\Sigma^2$ be the boundary of a small distance neighborhood of $K_1$, and notice that $d(K_1,K_2)= d(K_1,\Sigma^2)+d(K_2,\Sigma^2)$.
The upper bound on the distance between $K_1$ and $K_2$ now follows by applying Main Theorem \ref{t:2RicIncomplete} to $M^3\setminus (K_1\cup K_2)$ with the separating surface $\Sigma^2$. Finally, arguing as in the proof of Corollary \ref{HopfSphere} yields the rigidity statement in Corollary \ref{c:linkCor}.
\end{proof}

\subsection{A counterexample}\label{s:counterexample}

We conclude this section with explicit examples demonstrating the necessity of the $\mathrm{Ric}\geq 2g$ hypothesis in the rigidity statements of Main Theorems \ref{HopfSphereBoundary} and \ref{t:2RicIncomplete}, as well as Corollaries \ref{HopfSphere} and \ref{c:linkCor}. In particular, these examples show why a closed $3$-manifold with $2$-Ricci curvature at least $4$, which achieves equality in \eqref{radius}, need not be covered by the class $((-\frac\pi4,\frac\pi4)\times \mathbb{R}^2,g_{\scriptscriptstyle{\Upsilon}})$ without further imposing that $\Ric\ge 2g$. Here is the 1-parameter family of examples. For $t\in(-\frac{\pi}{4},\frac{\pi}{4})$, $x,y\in [0,2\pi)$, and a parameter $\delta\in[0,1]$, consider the doubly warped product metric
\begin{equation}
    g=dt^2+\phi^2dx^2+\psi^2dy^2, 
\end{equation}
where
\begin{equation}
    \phi(t)=e^{\frac{\delta}{2}(\sin 2t-1)}\cos\left(t+\frac{\pi}{4}\right),\quad\quad \psi(t)=e^{-\frac{\delta}{2}(\sin 2t+1)} \sin\left(t+\frac{\pi}{4}\right).
\end{equation}
Notice that $\phi(t)=\psi(-t)$.
Formally, $g$ is a metric on $(-\frac{\pi}{4},\frac{\pi}{4})\times[0,2\pi)\times[0,2\pi)$, but a straightforward calculation of the derivatives of $\phi$ and $\psi$ along $t=\pm\frac\pi4$ shows that $g$ actually extends to a smooth metric on $S^3$, viewed as the union of two solid tori.
A calculation shows that $\Ric$ is diagonalized in the $(t,x,y)$-coordinates, and that
\begin{align}
    \Ric(\partial_t, \partial_t)=&-\frac{\phi''}{\phi}-\frac{\psi''}{\psi}=2+4\delta-2\delta^2\cos^2(2t),
   \\ \Ric(\phi^{-1}\partial_x,\phi^{-1}\partial_x)=&-\frac{\phi''}{\phi} -\frac{\phi'\psi'}{\phi\psi}  =2+4\delta \sin 2t,
   \\ \Ric(\psi^{-1}\partial_y,\psi^{-1}\partial_y)=&-\frac{\psi''}{\psi} -\frac{\phi'\psi'}{\phi\psi}=2-4\delta \sin 2t.
\end{align}
Evidently, the Ricci curvature is not bounded below by $2$ when $0< \delta \le 1$. However, the $2$-Ricci curvature is always bounded below by $4$, and the width of these open manifolds attains the maximum value $\pi/2$. This shows saturation in the band-width inequalities of Main Theorem \ref{t:2RicIncomplete}, and Corollaries \ref{HopfSphere} and \ref{c:linkCor}.
Moreover, it may be easily checked that by removing thickened neighborhoods of the `Hopf link' $\{t=\pm\pi/4\}$, saturation also occurs in Main Theorem \ref{HopfSphereBoundary}.


\appendix 

\section{A Barrier Construction}

\begin{lemma}\label{L:A convergence}
Let $(M^n,g)$ be a closed $n$-dimensional Riemannian manifold. Fix distinct points $p,q\in M^n$, a constant $C\ge\frac{n-1}n$, and suppose that $f\in\mathrm{Lip}_{loc}(M^n\setminus \{p,q\})$ satisfies
\begin{align}\label{eq:condition f}
    f(x)=\begin{cases}
    \frac{C}{d(x,p)}+{O}(d(x,p))& \textnormal{ as }x\to p,
    \\  
    -\frac{C}{d(x,q)}+{O}(d(x,q))&  \textnormal{ as }x\to q.
    \end{cases}
\end{align}
For each small $\varepsilon>0$, let $u_\varepsilon\in C^{2,\alpha}(M^n\backslash (B_\varepsilon(p)\cup B_\varepsilon(q)))$, $\alpha\in (0,1)$ be the solution of the spacetime harmonic Dirichlet problem
\begin{align}\label{eq:appendix}
\begin{cases}
\Delta u_\varepsilon+nf |\nabla u_\varepsilon|=0 &\textit{\;in\;} M^n\backslash (B_\varepsilon(p)\cup B_\varepsilon(q)), \\
u_\varepsilon=1 &\textit{\;on\;}\partial B_\varepsilon(p),\\
u_\varepsilon=-1 &\textit{\;on\;}\partial B_\varepsilon(q).
\end{cases}
\end{align}
Then there exists a constant $C_1$ independent of $\varepsilon$ such that 
\begin{equation}\label{e:gradientapp}
    |\nabla u_{\varepsilon}|\leq C_1\quad\quad\text{on $\partial B_{\varepsilon}(p)\cup \partial B_{\varepsilon}(q)$}.
\end{equation}
Furthermore, $\{u_\varepsilon\}_{\varepsilon>0}$ subconverges 
to a spacetime harmonic function $u\in C^{2,\beta}(M^n \setminus \{p,q\})\cap C^{0}(M^n)$, with $\beta\in(0,\alpha)$, which satisfies $u(p)=1$ and $u(q)=-1$. If $n=3$, then additionally there is a uniform gradient bound $|\nabla u_{\varepsilon}|\leq C_1$ on $M^3 \setminus (B_{\varepsilon}(p) \cup B_{\varepsilon}(q))$, and the limit function is globally Lipschitz. 
\end{lemma}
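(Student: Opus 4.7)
My plan uses explicit radial barriers for $u_\varepsilon$ near $p$ and $q$, combined with standard Schauder extraction on the complement and --- for the $n=3$ Lipschitz conclusion --- a Bochner-based maximum principle on the annular region where $f$ is singular.

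\textbf{Barriers and boundary gradient bound.} Working in geodesic normal coordinates at $p$, one has $\Delta r=\tfrac{n-1}{r}+O(r)$ for $r(x)=d(x,p)$, and combined with $f=C/r+O(r)$ this yields
\begin{equation*}
\mathcal{L}\phi := \Delta\phi + nf|\nabla\phi| = \phi'' + \frac{n-1 - \operatorname{sgn}(\phi')\,nC}{r}\phi' + O(r|\phi'|)
\end{equation*}
for a radial $\phi(r)$. The leading-order ODE has solutions $\phi'\propto r^{\gamma}$ with $\gamma:= nC-(n-1)\ge 0$, which is bounded at $r=0$ exactly because of the hypothesis $C\ge (n-1)/n$. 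I will accordingly construct barriers
\begin{equation*}
\phi^{\pm}_{\varepsilon}(r) = 1 \mp M\int_\varepsilon^{r}s^{\gamma}\,ds \mp N\bigl(r^{\gamma+2}-\varepsilon^{\gamma+2}\bigr)
\end{equation*}
on the annulus $B_{r_0}(p)\setminus B_\varepsilon(p)$, with $M,N>0$ independent of $\varepsilon$ chosen so that the $r^{\gamma+2}$-correction dominates the $O(Mr^{\gamma+1})$ residue in $\mathcal{L}\phi^{\pm}_{\varepsilon}$, giving $\mathcal{L}\phi^+_\varepsilon\le 0\le\mathcal{L}\phi^-_\varepsilon$, with $\phi^\pm_\varepsilon(\varepsilon)=1$ and $\phi^-_\varepsilon(r_0)\le -1\le 1\le\phi^+_\varepsilon(r_0)$; the analogous construction is performed at $q$. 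Writing $|\nabla u|=\sup_{|\xi|\le 1}\xi\cdot\nabla u$ expresses $\mathcal{L}$ as a supremum of linear elliptic operators with $L^\infty$-coefficients, so the weak comparison principle combined with $|u_\varepsilon|\le 1$ gives $\phi^-_\varepsilon\le u_\varepsilon\le\phi^+_\varepsilon$ on the annulus; since all three functions coincide on $\partial B_\varepsilon(p)$ and $u_\varepsilon$ is constant there, a Hopf comparison yields $|\nabla u_\varepsilon|\le M\varepsilon^{\gamma}+O(1)\le C_1$ on $\partial B_\varepsilon(p)$, and similarly on $\partial B_\varepsilon(q)$, establishing \eqref{e:gradientapp}.

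\textbf{Convergence and continuity at $p,q$.} On any compact $K\Subset M^n\setminus\{p,q\}$ the coefficient $f$ is bounded and Lipschitz and $|u_\varepsilon|\le 1$, so viewing $\Delta u_\varepsilon=-nf|\nabla u_\varepsilon|$ as a linear equation with bounded first-order coefficients, Krylov--Safonov followed by Schauder theory supplies uniform $C^{2,\alpha'}$ bounds. A diagonal extraction produces a subsequence $u_\varepsilon\to u$ in $C^{2,\beta}_{loc}(M^n\setminus\{p,q\})$ for $\beta<\alpha$, with $u$ satisfying the spacetime harmonic equation. Passing to the limit in the sandwich $\phi^-_\varepsilon\le u_\varepsilon\le\phi^+_\varepsilon$ on a fixed subannulus and then letting $\varepsilon\to 0$ gives $|u(x)-1|\le M\int_0^{r(x)}s^\gamma\,ds\to 0$ as $x\to p$ (analogously at $q$), so $u$ extends continuously to $M^n$ with $u(p)=1$, $u(q)=-1$.

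\textbf{Global gradient bound in $n=3$ --- the main obstacle.} Away from $B_{r_0}(p)\cup B_{r_0}(q)$, $f$ and $|\nabla f|$ are uniformly bounded, so standard interior quasilinear gradient estimates yield an $\varepsilon$-uniform $L^\infty$ bound on $|\nabla u_\varepsilon|$. The genuine difficulty lies inside the annuli where $f$ blows up. I intend to use a Bochner argument in the spirit of Lemma \ref{l:2Ricidentity}: substituting $\Delta u_\varepsilon=-3f|\nabla u_\varepsilon|$ into the Bochner identity for $|\nabla u_\varepsilon|^2$ and using $|\nabla^2 u_\varepsilon|^2=|\overline{\nabla}^2 u_\varepsilon|^2+3f^2|\nabla u_\varepsilon|^2$ produces
\begin{equation*}
\Delta|\nabla u_\varepsilon|^2 + \frac{3f}{|\nabla u_\varepsilon|}\langle\nabla u_\varepsilon,\nabla|\nabla u_\varepsilon|^2\rangle \ge 6f^2|\nabla u_\varepsilon|^2 + 2\Ric(\nabla u_\varepsilon,\nabla u_\varepsilon) - 6|\nabla u_\varepsilon|\langle\nabla u_\varepsilon,\nabla f\rangle.
\end{equation*}
The strongly positive $6f^2|\nabla u_\varepsilon|^2\sim 6C^2 r^{-2}|\nabla u_\varepsilon|^2$ is exploited against the singular gradient term $\sim 6Cr^{-2}|\nabla u_\varepsilon|^2$, and a radial supersolution of the resulting linear inequality of the form $A+Br^{2\gamma}$, with boundary data supplied by \eqref{e:gradientapp} at $\partial B_\varepsilon$ and the outer-region bound at $\partial B_{r_0}$, dominates $|\nabla u_\varepsilon|^2$ by the comparison principle. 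This yields a uniform bound on $|\nabla u_\varepsilon|$ throughout the annulus; assembling with the outer-region bound and passing to the limit yields the global Lipschitz property of $u$.
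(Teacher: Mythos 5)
Your barrier construction and convergence argument are essentially sound and parallel the paper's, though the paper uses a simpler polynomial barrier $\underline{u}_\varepsilon = a_\varepsilon - br + br^2$ rather than a $\gamma$-tailored one; the condition $C\geq \tfrac{n-1}{n}$ enters both constructions through the sign of the effective radial ODE coefficient $\tfrac{n-1-nC}{r}=-\tfrac{\gamma}{r}$. Note, however, that your displayed formula for $\phi^{\pm}_\varepsilon$ has the $\mp$ signs reversed (as written, $\phi^-_\varepsilon$ is increasing and cannot satisfy $\phi^-_\varepsilon(r_0)\le -1$), and the sign on the $N(r^{\gamma+2}-\varepsilon^{\gamma+2})$ correction must be chosen so that the correction term has the same sign as $\mathcal{L}\phi^\pm_\varepsilon$ is required to have; with the correct signs the construction works.

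The genuine gap is in your $n=3$ global Lipschitz argument. Your Bochner inequality for $w=|\nabla u_\varepsilon|^2$ discards all of $|\overline{\nabla}^2 u_\varepsilon|^2$, producing the zeroth-order coefficient $6f^2 - 6|\nabla f|\sim (6C^2-6C)/r^2$. This is nonnegative only for $C\geq 1$, whereas the hypothesis is $C\geq \tfrac{2}{3}$, and in the actual application (the waist inequality, where $f\sim -\tfrac{2}{3r}$) the borderline case $C=\tfrac{2}{3}$ occurs, for which $6C^2-6C=-\tfrac{4}{3}<0$. Thus the zeroth-order coefficient of your linearized inequality is positive near $p$, so the comparison principle you invoke to dominate $w$ by a radial supersolution $A+Br^{2\gamma}$ fails. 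The paper avoids this by not discarding all of the Hessian: in Lemma \ref{L:appendix B} one keeps $\sum_{i,j\neq 1}u_{ij}^2$ and completes the square against the cross term $3f|\nabla u|\sum_{i\neq 1}u_{ii}$, reducing $9f^2$ to $\tfrac{9}{2}f^2$ while eliminating the free cross term, and works with $\phi = |\nabla u|/u$ rather than $|\nabla u|^2$, which sharpens the $|\nabla f|$ coefficient to $3$. This yields the threshold $\tfrac{9}{2}f^2 - 3|\nabla f|\sim \tfrac{3C(3C-2)}{2r^2}\geq 0$, exactly matching $C\geq\tfrac{2}{3}$. Moreover, the paper's Theorem \ref{T:Gradient Estimate} does not use a global radial supersolution; instead it uses a localized Cheng--Yau cut-off $(\rho^2-r^2)\phi$, and in the application near $p$ chooses $\rho\sim d(p_1,\partial B_\varepsilon(p))$ so that the resulting bound $|\nabla u_\varepsilon|\lesssim \rho^{-1}(1-u_\varepsilon)$ combines with the barrier estimate $1-u_\varepsilon\lesssim \rho$ to give a uniform constant. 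You would need to replace your crude Bochner and radial-supersolution step with this refined splitting and the weighted maximum-point argument.
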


\begin{proof}
We will construct subsolutions $\underline u_\varepsilon$ with a $C^1$ bound that is uniform for all small $\varepsilon>0$.
More precisely let $r(x)=d(x,p)$ and define 
\begin{align}
    \underline u_\varepsilon=a_\varepsilon-br+br^2,
\end{align}
where $a_\varepsilon$ and $b$ are positive constants to be determined.
To ensure that $\underline u_\varepsilon$ agrees with $u_\varepsilon$ on $\partial B_\varepsilon(p)$, we choose $a_\varepsilon=1+b\varepsilon-b\varepsilon^2$. 
Next, recall that the mean curvature of geodesic spheres $\partial B_{r}(p)$ has an expansion $H_r=\frac{n-1}{r}+O(r)$ for small radii. Then, using also the assumption \eqref{eq:condition f} it follows that
\begin{equation}\label{aofinlinqh}
 \left|f-\frac{C}{r}\right|\le \frac{1}{2n} \quad\quad\text{and}\quad\quad \left|H_r-\frac{n-1}{r}\right|\le \frac{1}{2} \quad \quad\textnormal{\;\;in}\!\!\quad B_{r_0}(p),
\end{equation}
for some fixed $r_0\in(0,\frac{1}{4})$ less than the injectivity radius at $p$ and independent of $\varepsilon$. 
From now on it will be assumed that $\varepsilon<\frac{r_0}{4}$. In order to achieve
$\underline{u}_\varepsilon(r_0)\leq -1$, which according to the maximum principle is additionally a lower bound for $u_\varepsilon$, we choose $b= \frac{4}{r_0}$ since then
\begin{equation} \label{b r0}
    \underline{u}_\varepsilon(r_0)= a_{\varepsilon}-br_0+br_0^2=1-b(r_0-\varepsilon-r_0^2+\varepsilon^2)\le 1-b\left(\frac{3}{4}r_0-\varepsilon \right)\le 1-\frac{1}{2}r_0b= -1.
\end{equation} 
To see that $\underline{u}_\varepsilon$ is a subsolution of equation \eqref{eq:appendix} on $B_{r_0}(p)\setminus B_{\varepsilon}(p)$, note that
$\partial_r\underline{u}_\varepsilon=-b+2br<0$, $|\nabla \underline{u}_\varepsilon|=-\partial_r \underline{u}_\varepsilon$,
and by assumption $C\geq\frac{n-1}{n}$ which imply
\begin{align}
\begin{split}
    \Delta \underline{u}_\varepsilon +nf|\nabla \underline{u}_\varepsilon|=&\partial_r^2 \underline{u}_\varepsilon +H_r\partial_r \underline{u}_\varepsilon -nf\partial_r \underline{u}_\varepsilon\\
    \ge&(-b+2br)\left(\frac{n-1}{r}-\frac{nC}{r} +1\right)+2b\\
    \ge&-b+2br+2b>0,
\end{split}
\end{align}
where in the second to last inequality \eqref{aofinlinqh} was used.
The comparison principle may now be applied to find that $u_\varepsilon\ge\underline{u}_\varepsilon\ge1-b(r-\varepsilon)$ on the annulus $B_{r_0}(p)\setminus B_\varepsilon(p)$. Since $u_{\varepsilon}$ and $\underline{u}_{\varepsilon}$ agree at the smaller radius, the desired boundary gradient estimate at $\partial B_\varepsilon(p)$ now follows
\begin{equation}
|\nabla u_\varepsilon|= -\partial_ru_\varepsilon\le -\partial_r\underline{u}_\varepsilon=b-2b\varepsilon\le b.
\end{equation}
Similarly, we obtain a uniform boundary gradient estimate for $u_\varepsilon$ at $\partial B_\varepsilon(q)$, which establishes \eqref{e:gradientapp}.

Consider now the issue of convergence. Since $|u_{\varepsilon}|\leq 1$, interior elliptic estimates combined with a diagonal argument show that $u_\varepsilon$ subconverges to a spacetime harmonic function function $u$ uniformly on compact subsets in $C^{2,\beta}(M\setminus\{p,q\})$, for any $\beta\in(0,\alpha)$.  
Moreover, the barrier estimates give $1\geq u_\varepsilon\ge 1-br$ on $B_{r_0}(p)\setminus B_\varepsilon(p)$, and hence the limit function $u$ satisfies the same uniform estimate on $B_{r_0}(p)$. It follows that $u(x)\to1$ as $x\to p$, and similarly $u(x)\to-1$ for $x\to q$. Lastly, this estimate also implies that the solution is Lipschitz at $p$ and $q$.

In fact, the solution is globally Lipschitz in dimension 3. To see this,
we will combine the proof of Theorem \ref{T:Gradient Estimate} below, to achieve a uniform gradient estimate for $u_\varepsilon$ on $M^3 \setminus (B_{\varepsilon}(p) \cup B_{\varepsilon}(q))$. Let $p_1\in M^3 \setminus (B_{\varepsilon}(p) \cup B_{\varepsilon}(q))$ be such that $\rho=\tfrac12 d(p_1,\partial B_\varepsilon(p))\le\frac{r_0}{2}$. Note that since $1-u_{\varepsilon}$ is positive in $B_{\rho}(p_1)$, we may replace $u_\varepsilon$ by $1-u_\varepsilon$ in \eqref{F estimate}. Furthermore, using the notation of the proof of Theorem \ref{T:Gradient Estimate},
it holds that $F(x_1)\le c\rho$ for some uniform constant $c$. Then \eqref{(B.10)} implies that
\begin{equation}
|\nabla u_\varepsilon|(x)\le \frac{4}{3}\rho^{-2}\cdot c\rho(1-u_\varepsilon(x)), \quad\quad\text{ for } x\in B_\frac{\rho}{2}(p_1).
\end{equation}
Moreover $d(x,\partial B_\varepsilon(p))\le \frac{5}{2}\rho$ for $x\in B_\frac{\rho}{2}(p_1)$, and therefore the maximum principle argument above yields $1-u_\varepsilon (x)\le b(r-\varepsilon)\le \frac{5}{2}b\rho$. It follows that
\begin{equation}
    |\nabla u_\varepsilon|(x)\le \frac{4}{3}c\rho^{-1}(1-u_\varepsilon(x))\le\frac{10}{3}cb, \quad\quad\text{ for } x\in B_\frac{\rho}{2}(p_1).
\end{equation}
Similar arguments apply also near the point $q$. Hence, $|\nabla u_\varepsilon|$ is uniformly bounded on the complement of $B_\varepsilon(p)\cup B_\varepsilon(q)$, independent of $\varepsilon$.  
\end{proof}

\section{Gradient Estimate for Spacetime Harmonic Functions}\label{A: gradient estimate}

This section is devoted to the proof of Theorem \ref{T:Gradient Estimate}, which is a modified version of the Cheng-Yau gradient estimate \cite{schoen1994lectures}.
We begin with a preliminary local estimate.

\begin{lemma}\label{L:appendix B}
Let $(M^3,g)$ be a $3$-dimensional Riemannian manifold with $\Ric\ge -\Lambda g$, $\Lambda>0$. Let $f\in C^{1,\alpha}(M^3)$, and let $u\in C^{2,\alpha}(M^3)$, $\alpha\in(0,1)$ be a positive solution of $\Delta u+3f|\nabla u|=0$.
If $p\in M^3$ is a point such that $\nabla u(p)\ne0$,
then $\phi=\frac{|\nabla u|}{u}$ is $C^{2,\alpha}$ near $p$ and at this point the following inequality holds 
\begin{align}
    \Delta\phi \ge & -\left(\Lambda+3  |\nabla f|-\frac{9}{2}f^2\right )\phi-\frac{\langle \nabla\phi, \nabla u\rangle}{u}+\frac{1}{2}\phi^3+3f\phi^2.
\end{align}
\end{lemma}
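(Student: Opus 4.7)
This is a Cheng--Yau style gradient estimate adapted to the spacetime harmonic equation, and the plan is to compute $\Delta\phi$ via the Bochner formula applied to $v:=|\nabla u|$ together with a refined Kato inequality tuned to dimension three. First, I would verify the regularity claim: since $f\in C^{1,\alpha}$ and $|\nabla u|$ is $C^{1,\alpha}$ and nonzero in a neighborhood of $p$, the right-hand side of $\Delta u=-3f|\nabla u|$ is $C^{1,\alpha}$ locally, so elliptic Schauder theory upgrades $u$ to $C^{3,\alpha}$ near $p$; hence $\phi=v/u$ is $C^{2,\alpha}$ in this neighborhood.

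A direct quotient-rule calculation using $\Delta u=-3fv$, together with the identity $\langle\nabla v,\nabla u\rangle/u^2=\langle\nabla\phi,\nabla u\rangle/u+\phi^3$, yields
\begin{equation*}
\Delta\phi=\frac{\Delta v}{u}-\frac{2\langle\nabla\phi,\nabla u\rangle}{u}+3f\phi^2,
\end{equation*}
so the problem reduces to a pointwise lower bound on $\Delta v$. For that, I would invoke Bochner's formula, substitute the spacetime harmonic equation to compute $\langle\nabla u,\nabla\Delta u\rangle=-3v\langle\nabla u,\nabla f\rangle-3f\langle\nabla u,\nabla v\rangle$, use the Ricci bound $\Ric\ge-\Lambda g$, and apply Cauchy--Schwarz to $\langle\nabla u,\nabla f\rangle$; these steps produce the $-\Lambda$ and $-3|\nabla f|$ terms of the lemma.

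The crucial step is a refined Kato inequality in dimension three. In an orthonormal frame $\{e_1,e_2,e_3\}$ at $p$ with $e_3=\nu=\nabla u/v$ and $A:=u_{33}$, one has $|\nabla v|^2=u_{13}^2+u_{23}^2+A^2$, and applying $u_{11}^2+u_{22}^2\ge\tfrac{1}{2}(u_{11}+u_{22})^2$ (together with dropping $2u_{12}^2\ge 0$) produces
\begin{equation*}
|\nabla^2 u|^2-|\nabla v|^2 \ge \tfrac{1}{2}(\Delta u-A)^2+|\nabla v|^2-A^2 = \tfrac{1}{2}(3fv+A)^2+|\nabla v|^2-A^2.
\end{equation*}
The key algebraic feature is that expanding this square generates a cross term $3fvA$ which \emph{exactly cancels} the $-3f\langle\nabla u,\nabla v\rangle=-3fvA$ coming from differentiating the spacetime equation; this cancellation is precisely what produces the sharp coefficient $\tfrac{9}{2}f^2$ in the statement and leaves the lower bound $\Delta v\ge\tfrac{|\nabla v|^2}{v}-\tfrac{A^2}{2v}+\tfrac{9f^2 v}{2}-\Lambda v-3v|\nabla f|$.

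Assembling these pieces and translating quotients of $v$ and $u$ back into $\phi$, the claim reduces to the purely algebraic inequality
\begin{equation*}
2u^2|\nabla v|^2-u^2A^2+v^4 \ge 2uAv^2,
\end{equation*}
which follows from $|\nabla v|^2\ge A^2$ together with AM--GM applied in the form $u^2A^2+v^4\ge 2|uA|v^2\ge 2uAv^2$. The main delicacy -- and the step that carries all of the dimensional information -- is the refined Kato inequality and the subsequent bookkeeping of the $3fvA$ cancellation; the remainder of the argument is essentially mechanical.
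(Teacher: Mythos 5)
Your proposal is correct and follows essentially the same route as the paper: Bochner's formula combined with the spacetime equation, a refined Kato inequality obtained by applying $a^2+b^2\geq\tfrac12(a+b)^2$ to the tangential Hessian, and a final Cauchy--Schwarz/AM--GM step to absorb the leftover terms into $\tfrac12\phi^3$. The explicit tracking of the $3fvA$ cancellation is a slightly more transparent way of organizing what the paper achieves by substituting $u_{11}=-3fv-\sum_{i\neq 1}u_{ii}$ before completing the square, but the content is identical.
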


\begin{proof}
Let $p\in M^3$ be a point with $\nabla u\ne0$, and let $\{x^i\}_{i=1}^3$ be a normal coordinate system around $p$ with $\partial_1=\frac{\nabla u}{|\nabla u|}$ at $p$.
Note that since $f\in C^{1,\alpha}(M^3)$, and $\nabla u\ne0$, $u\in C^{3,\alpha}(M^3)$ by Schauder estimates.
Using the notations $u_{ij}:=\nabla_{ij} u$, $f_i:=\partial_i f$, we compute at $p$
\begin{align}
\begin{split}
    \frac{1}{2}\Delta (|\nabla u|^2)=&|\nabla^2u|^2+\langle \nabla u,\nabla \Delta u\rangle+\Ric(\nabla u,\nabla u)
    \\ \ge& |\nabla^2u|^2-\langle \nabla u, \nabla (3f|\nabla u|)\rangle-\Lambda |\nabla u|^2
    \\=& |\nabla^2u|^2-3|\nabla u|^2f_1-3f|\nabla u|u_{11}-\Lambda |\nabla u|^2
    \\ \ge &
    |\nabla^2u|^2+3f|\nabla u|\left(3f|\nabla u|+\sum_{i\neq 1}u_{ii}\right)-(\Lambda+3 |\nabla f|) |\nabla u|^2.
    \end{split}
\end{align}
Combining the above inequality with the identity $\frac{1}{2}\Delta(|\nabla u |^2)=|\nabla u|\Delta |\nabla u|+|\nabla |\nabla u||^2$, we obtain
\begin{align}\label{e:appBest1}
\begin{split}
    |\nabla u|\Delta|\nabla u|\ge& \sum_{j\neq1}  u_{1j}^2+\sum_{i,j\neq 1}u_{ij}^2+3f|\nabla u|\sum_{i\neq 1}u_{ii}-(\Lambda+3  |\nabla f|-9f^2)|\nabla u|^2
   \\ \ge&  \sum_{j\neq1} u_{1j}^2+ \frac{1}{2}\left(\sum_{i\neq 1}u_{ii}+3 f|\nabla u|\right)^2-\left(\Lambda+3  |\nabla f|-\frac{9}{2}f^2\right)|\nabla u|^2
   \\ \ge& \frac{1}{2} |\nabla |\nabla u||^2-\left(\Lambda+3  |\nabla f|-\frac{9}{2}f^2\right)|\nabla u|^2,
   \end{split}
\end{align}
where we have made use of the equation $\Delta u+3f|\nabla u|=0$ in the last inequality. Combining \eqref{e:appBest1} with Cauchy-Schwarz implies that
\begin{align}
\begin{split}
    \Delta\phi=&\frac{\Delta|\nabla u|}{u}-\frac{\langle 2\nabla|\nabla u|, \nabla u\rangle}{u^2}+\frac{2|\nabla u|^3}{u^3}-|\nabla u|\frac{\Delta u}{u^2}
    \\=&\frac{\Delta|\nabla u|}{u}-\frac{\langle 2\nabla |\nabla u|, \nabla u\rangle}{u^2}+2\phi^3+\frac{3f|\nabla u|^2}{u^2}
    \\ \ge & 
    \frac{1}{2}\frac{|\nabla|\nabla u||^2}{u|\nabla u|}-\left(\Lambda+3  |\nabla f|-\frac{9}{2}f^2\right)\phi-\frac{\langle \nabla\phi, \nabla u\rangle}{u}-\frac{\langle \nabla |\nabla u|, \nabla u\rangle}{u^2}+\phi^3+3f\phi^2
    \\ \ge & -\left(\Lambda+3  |\nabla f|-\frac{9}{2}f^2\right)\phi-\frac{\langle \nabla\phi, \nabla u\rangle}{u}+\frac{1}{2}\phi^3+3f\phi^2 ,
    \end{split}
\end{align}
which finishes the proof.
\end{proof}

Before stating the gradient estimate, we need some notation. Given a Lipschitz function $f$ on a Riemannian manifold $(M^n,g)$ and a point $p\in M^n$, we denote
\begin{equation}
    L_f(p)=\limsup_{\varepsilon\rightarrow 0}\mathrm{Lip}_{B_{\varepsilon}(p)}(f).
\end{equation}

\begin{theorem}\label{T:Gradient Estimate}
Let $(M^3,g)$ be an $3$-dimensional Riemannian manifold. Suppose $p\in M^3$ and $\rho>0$ satisfy $B_\rho(p)\cap\partial M^3=\emptyset$.
Let $C_0>0$ be a constant and let $u\in C^{2,\alpha}(M^3)$, $\alpha\in(0,1)$ be a solution of $\Delta u+3f|\nabla u|=0$, where $f\in \mathrm{Lip}(M^3)$ does not change sign and satisfies $\frac{9}{2}f^2-3 L_f\ge -C_0$  on $B_\rho(p)$. Then 
\begin{equation}
   \sup_{B_{\frac{\rho}{2}}(p)} |\nabla u|\le C(1+\sup_{B_\rho(p)} |u|),
    \label{gradient esitmate}
\end{equation}
where the constant $C$ depends only on $\rho$, $C_0$, and the lower bound for Ricci curvature in $B_\rho(p)$.
\end{theorem}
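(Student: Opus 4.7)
The plan is to follow the Cheng-Yau method for gradient estimates, using Lemma~\ref{L:appendix B} as the key pointwise differential inequality for the quantity $\phi := |\nabla \tilde u|/\tilde u$.

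First I reduce to the case where the relevant function is positive and the coefficient in the spacetime harmonic equation is nonnegative. Set $A := 1 + \sup_{B_\rho(p)}|u|$, and define $\tilde u := u + A$ in case $f \geq 0$ on $B_\rho(p)$, or $\tilde u := A - u$ in case $f \leq 0$; this is well-defined since $f$ has constant sign. A direct computation shows that in both cases $\tilde u$ is positive, bounded by $1+2\sup_{B_\rho(p)}|u|$, satisfies $|\nabla\tilde u|=|\nabla u|$, and solves $\Delta \tilde u + 3\tilde f|\nabla\tilde u|=0$ with $\tilde f\geq 0$ and $|\tilde f|=|f|$; the hypothesis on $f$ is thereby preserved. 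Moreover the hypothesis $\frac{9}{2}f^2 - 3L_f \geq -C_0$ yields $3|\nabla f| - \frac{9}{2}f^2 \leq C_0$ almost everywhere, so discarding the nonnegative term $3\tilde f \phi^2$ in Lemma~\ref{L:appendix B} gives, at points where $\nabla \tilde u \neq 0$,
\begin{equation}\label{e:appBineq}
\Delta \phi \geq -(\Lambda + C_0)\phi - \frac{\langle \nabla\phi, \nabla\tilde u\rangle}{\tilde u} + \tfrac{1}{2}\phi^3.
\end{equation}
It therefore suffices to produce a uniform pointwise bound on $\phi$ over $B_{\rho/2}(p)$.

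Second, I run the standard cutoff maximum principle. Let $\eta(x) := \psi(d(x,p)/\rho)$ where $\psi:[0,1]\to [0,1]$ is smooth, nonincreasing, equal to $1$ on $[0,1/2]$, supported away from $1$, and satisfies $(\psi')^2/\psi \leq C$ and $|\psi''| \leq C$. Laplacian comparison using $\Ric \geq -\Lambda g$ provides the standard bounds $|\nabla\eta|^2/\eta \leq C\rho^{-2}$ and $-\eta\Delta\eta \leq C(\rho^{-2}+\Lambda)$; the possible non-smoothness of $d(\cdot,p)$ at its cut locus is handled by the classical Calabi perturbation trick. Consider $G := \eta\phi$ and let $p_0 \in B_\rho(p)$ be a maximum point of $G$. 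If $G(p_0)=0$ there is nothing to prove, so assume $\phi(p_0)>0$, in which case $\phi$ is $C^{2,\alpha}$ near $p_0$. The conditions $\nabla G(p_0)=0$ and $\Delta G(p_0)\leq 0$ read $\nabla\phi = -\phi\nabla\eta/\eta$ and $\eta\Delta\phi \leq 2\phi|\nabla\eta|^2/\eta - \phi\Delta\eta$ at $p_0$.

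Inserting \eqref{e:appBineq}, using $|\langle\nabla\eta,\nabla\tilde u\rangle|/\tilde u \leq |\nabla\eta|\phi$ to estimate the gradient coupling term, and multiplying through by $\eta$, I arrive at the quadratic inequality
\begin{equation*}
\tfrac{1}{2}G^2 \leq |\nabla\eta|\,G + 2|\nabla\eta|^2 - \eta\Delta\eta + (\Lambda+C_0)\eta^2 \qquad \text{at }p_0.
\end{equation*}
Absorbing $|\nabla\eta|G \leq \tfrac{1}{4}G^2 + |\nabla\eta|^2$ and inserting the cutoff estimates yields $G(p_0)^2 \leq C(\rho^{-2} + \Lambda + C_0)$ for a constant $C$ depending only on the dimension. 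Since $\eta\equiv 1$ on $B_{\rho/2}(p)$, this bounds $\phi$ there, and combining with the upper bound on $\tilde u$ produces \eqref{gradient esitmate}. The main technical point in the proof is the Calabi-type perturbation at the cut locus, which is entirely routine in this context and does not affect the final estimate.
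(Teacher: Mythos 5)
Your argument runs on the same Cheng--Yau rail as the paper's: reduce to a positive solution $\tilde u$ with nonnegative coefficient $\tilde f$, feed Lemma~\ref{L:appendix B} into a cutoff maximum principle, extract a quadratic inequality for the quantity $G$ at an interior maximum, and close with the Calabi trick at the cut locus. The cutoff choice differs cosmetically (the paper takes $F=(\rho^2-r^2)\phi$; you take $G=\eta\phi$ with a smooth bump $\eta$), and your sign-reduction $\tilde u = u+A$ or $\tilde u = A-u$ is a cleaner way to say what the paper says in one line. So the overall logic is correct.

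There is, however, a genuine gap that you do not address. You invoke Lemma~\ref{L:appendix B} directly for $\tilde f$, and at the maximum point you assert that $\phi$ is $C^{2,\alpha}$ near $p_0$. But Lemma~\ref{L:appendix B} carries the hypothesis $f\in C^{1,\alpha}$, precisely because its proof needs $u\in C^{3,\alpha}$ (by Schauder, since $\Delta u = -3f|\nabla u|$ has $C^{1,\alpha}$ right-hand side away from critical points) in order to make classical sense of $\Delta\phi$. In Theorem~\ref{T:Gradient Estimate} the datum $f$ is merely Lipschitz, which is also the regularity of your $\tilde f$; in that case $u$ is only $W^{3,p}_{loc}$ and $\phi$ is only $W^{2,p}_{loc}$, so the pointwise differential inequality of Lemma~\ref{L:appendix B} is not directly available. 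The paper circumvents this by proving the estimate for $C^{1,\alpha}$ coefficients and then mollifying: it replaces $f$ by smooth approximants $f_\varepsilon$ with $|f-f_\varepsilon|\leq\varepsilon$ and $|\nabla f_\varepsilon|\leq L_f+\varepsilon$, preserving nonnegativity and the $C_0$-bound, solves the corresponding Dirichlet problems $u_\varepsilon$ on $B_\rho(p)$, applies the smooth-case estimate uniformly, and argues that $u_\varepsilon\to u$ in $C^{2,\alpha}$ by uniqueness. You flag the cut locus as the ``main technical point,'' but that is the routine part; the approximation in $f$ is the step your write-up is actually missing, and it is needed for the statement as written.
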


\begin{proof}
Let us first consider the case $f\in C^{1,\alpha}(B_\rho(p))$.
Following the proof of \cite[Theorem 3.1]{schoen1994lectures}, we introduce the notation $F=(\rho^2-r^2)\phi$ where $\phi=\frac{|\nabla u|}u$ and $r(x)= d(x,p)$.
Replacing $u$ by $u+1+\sup_{B_{\rho}(p)}|u|$, we may assume that $u>0$ which makes $\phi$ and therefore $F$ well-defined.
Observe that $F=0$ on $\partial B_\rho(p)$, and that $F\ge 0$ in $ B_\rho(p)$. Let $x_1\in B_\rho(p)$ a point where $F$ attains its maximum value in $ B_\rho(p)$.
We may assume that $x_1$ lies in the interior of $B_\rho(p)$ and that $F$ is strictly positive at this point, since otherwise $u$ must be constant.

Suppose first that the maximal point $x_1$ does not belong to the cut locus $\text{Cut}(p)$.
Since $x_1$ lies in the interior of $B_\rho(p)$, we have $\nabla F=0$ and $\Delta F\le 0$ at this point.
The former relation implies $(\rho^2-r^2)\nabla \phi=\phi\nabla r^2$, while the latter relation together with Lemma \ref{L:appendix B} yields
\begin{align}\label{gradient 1}
\begin{split}
0\ge&\frac{\Delta F}{(\rho^2-r^2)\phi}
\\=& \frac{\Delta\phi}{\phi}- \frac{2\langle \nabla r^2, \nabla   \phi\rangle}{(\rho^2-r^2)\phi}-\frac{\Delta r^2}{\rho^2-r^2}
\\ \ge &
-\Lambda-3|\nabla f|+\frac{9}{2}f^2- \frac{\langle\nabla \phi, \nabla u\rangle}{u\phi}+\frac{1}{2}\phi^2+3f\phi
- \frac{2\langle\nabla r^2, \nabla   \phi\rangle}{(\rho^2-r^2)\phi}-\frac{C_1}{\rho^2-r^2}
\\ = & -\Lambda-3|\nabla f|+\frac{9}{2}f^2- \frac{2r\langle\nabla r, \nabla u\rangle}{u(\rho^2-r^2)}+\frac{1}{2}\phi^2+3f\phi- \frac{8r^2}{(\rho^2-r^2)^2}-\frac{C_1}{\rho^2-r^2}
\\ \ge& -\Lambda-3|\nabla f|+\frac{9}{2}f^2- \frac{2r\phi}{\rho^2-r^2}+\frac{1}{2}\phi^2+3f\phi
- \frac{8r^2}{(\rho^2-r^2)^2}-\frac{C_1}{\rho^2-r^2},
\end{split}
\end{align}
where $\Lambda$ is the lower bound of $\Ric$ in $B_\rho(p)$ and by \cite[Corollary 1.2]{schoen1994lectures} we have
\begin{align}
\Delta r^2\le 6+4r\sqrt{\frac{\Lambda}{2}}\le 6+4\rho\sqrt{\frac{\Lambda}{2}}=:C_1.
\end{align}
Recall that $f$ does not change sign by assumption.
If $f\le 0$, we may apply the arguments below with obvious modifications to the function $u_0=2\sup_{B_{\rho}(p)}|u|+2-u$ which satisfies $\Delta u_0=nf|\nabla u_0 |$.
Therefore, we will assume that $f\ge0$ in what follows. 
Since $\frac{9}{2}f^2-3|\nabla f|\ge -C_0$ and $\phi(x_1)> 0$, at $x_1$ we have
\begin{align}
\label{F estimate}
\begin{split}
    0\ge& \frac{1}{2}\phi^2-\frac{2r}{\rho^2-r^2}\phi- \frac{8r^2}{(\rho^2-r^2)^2}-\frac{C_1}{\rho^2-r^2}-\Lambda-C_0
    \\ =& \frac{1}{(\rho^2-r^2)^2}\left[\frac{1}{2}F^2-2r F-8r^2-C_1(\rho^2-r^2)-(\Lambda+C_0)(\rho^2-r^2)^2\right].
    \end{split}
\end{align}
Therefore, $F(x_1)\le C_2$ where $C_2$ depends on $\Lambda$, $C_0$, $C_1$, and $\rho$.
Furthermore, since $x_1$ is a point where $F=(\rho^2-r^2)\frac{|\nabla u|}u$ attains its maximum, we have 
\begin{align}\label{(B.10)}
    |\nabla u|(x)\le\frac43 \rho^{-2}F(x_1) u(x)\le C(1+\sup_{B_\rho(p)} |u|)
\end{align}
for $x\in B_{\frac\rho2}(p)$, where $C$ is a constant depending only on $\Lambda$, $C_0$, and $\rho$, but not on $f$.


Next, suppose that $x_1\in \text{Cut}(p)$.
Following the proof in \cite{schoen1994lectures}, we let $\gamma$ be a minimizing geodesic connecting $x_1$ and $p$. 
For $\epsilon>0$ sufficiently small, let $\bar{p}$ be the point on $\gamma$ with $d(p,\bar{p})=\epsilon$, and define 
\begin{equation}
\bar{F}(x)=\left[\rho^2-(\bar{r}(x)+\epsilon)^2\right]\phi, \quad\quad\quad  \bar{r}(x):=d(x,\bar{p}).
\end{equation}
Since $\bar{r}(x)+\epsilon\ge r(x)$ it holds that $\bar{F}(x)\le F(x)$. Moreover $\bar{F}(x_1)=F(x_1)$, because $\gamma$ is minimizing. Thus, $x_1$ is also a maximal point of $\bar{F}(x)$.
Noting that $\bar{r}(x)$ is smooth at $x_1$, the above computations hold with $\bar F$ instead of $F$.

Finally, if $f$ is merely Lipschitz we may approximate $f$ by $f_\varepsilon\in C^{\infty}$ such that 
\begin{equation}\label{aonfoiqwnohinqh}
  |f(x)-f_\varepsilon(x)|\le \varepsilon, \quad\quad \quad |\nabla f_\varepsilon(x)|\le \mathrm{Lip}_{B_{\varepsilon}(x)}(f)+\varepsilon,
\end{equation}
for all $x\in B_\rho(p)$. Such an approximation $f_\varepsilon$ can be constructed as in \cite[Theorem 2.2]{CRi}, and satisfies $f_{\varepsilon}\geq -\varepsilon$ since $f$ is nonnegative; this property is sufficient to carry out the arguments above. Furthermore, since \eqref{aonfoiqwnohinqh} implies that $L_{f_{\varepsilon}}\leq L_f +\tfrac{1}{10}$, we have $\frac{9}{2}f^2_\varepsilon-3 L_{f_\varepsilon}\ge -C_0-1$ for $\varepsilon>0$ sufficiently small. Let $u_\varepsilon$ be the solution to the Dirichlet problem
\begin{align}
    \begin{cases}
    \Delta u_\varepsilon+3f_\varepsilon|\nabla u_\varepsilon|=0&\text{ in } B_\rho(p),\\
    u_\varepsilon=u&\text{ on }\partial B_\rho(p).
    \end{cases}
\end{align}
Observe that the above gradient estimate applied to $u_\varepsilon$ produces 
\begin{equation}\label{aefjiiohn}
   \sup_{B_{\frac{\rho}{2}}(p)} |\nabla u_\varepsilon|\le C(1+\sup_{ B_\rho (p)}|u_\varepsilon|)
   =C(1+\sup_{ \partial B_\rho(p)}|u|),
\end{equation}
where the last equality follows from the maximum principle. Moreover, as in \cite[Section 4]{HKK} the solutions $u_{\varepsilon}$ are uniformly controlled in $C^{2,\beta}(B_\rho(p))$ for $\beta\in(\alpha,1)$. By uniqueness, it follows that $u_{\varepsilon}$ subconverges to $u$ in $C^{2,\alpha}(B_{\rho}(p))$. The desired result now follows from \eqref{aefjiiohn}.
\end{proof}

\section{Open Riemannian Manifolds}\label{s:openappendix}

In this section, we will present some remarks on the nonstandard topic of open Riemannian manifolds, which are the central objects in Main Theorems \ref{Meyer boundary cor} and \ref{t:2RicIncomplete}. To begin, consider an open manifold $M$, and suppose that we are given a sequence of exhausting compact sets $\{K_i\}_{i=1}^\infty$ satisfying $K_i\subset K_{i+1}$ and $\bigcup_{i=1}^\infty \text{int}(K_i)=M$.
An \textit{end} of $M$ is a collection of connected components $\{U_i\}_{i=1}^\infty$ $U_i\subset (M\setminus K_i)$ satisfying $U_{i+1}\subset U_i$ up to the equivalence relation: $\{U_i\}_{i=1}^\infty\sim\{U'_i\}_{i=1}^\infty$ if $\{U_i\}_{i=i_0}^\infty=\{U'_i\}_{i=i'_0}^\infty$ for some integers $i_0$ and $i'_0$, see for instance \cite[Section 13.4]{ends} for further details. A path $\gamma\subset M$ is said to intersect an end $E$ if $\gamma\cap U_i\neq0$ for all $i$ and any representative $\{U_i\}_{i=1}^\infty$ of $E$.

An open manifold $M$ is said to be {\em{hemicompact}} if it supports an exhaustion by compact sets $\{K_i\}_{i=1}^\infty$ with the property that any given compact set lies within the interior of some $K_i$. If $M$ is hemicompact, there is a canonical compactification, known as the Freudenthal compactification and denoted by $\mathcal{F}(M)$, see for instance \cite{Freudenthal,Peschke}. As a set, $\mathcal{F}(M)=M\cup \mathcal{E}(M)$ where $\mathcal{E}(M)$ denotes the space of $M$'s ends. The topology of $\mathcal{F}(M)$ is given by the open sets of $M$ along with sets $U\cup \{e\}$ where $e\in\mathcal{E}(M)$ is an end and $U$ is an open set of $M$ contained in some representative of $e$. 
Every locally compact Lindel\"{o}f space is hemicompact \cite[page 84]{Minguzzi}, where we recall that a Lindel\"{o}f space is a topological space in which every open cover has a countable subcover. Since a second-countable space is Lindel\"{o}f, all manifolds considered in this paper are hemicompact, see \cite[Theorem 1.3.1]{PetersenM}.

Below, Lemma \ref{l:compactification} can be considered as a variant of the Hopf-Rinow Theorem and has great technical importance to the proofs of Main Theorems \ref{Meyer boundary cor} and \ref{t:2RicIncomplete}. 
First, a technical lemma.

\begin{lemma}\label{l:completeness}
Let $(M,g)$ be an open Riemannian manifold with ends $\mathcal{E}(M)$. If $q\in M$ and $s<d(q,\mathcal{E}(M))$, then the exponential map $\exp_q$ is defined on the closed ball $\overline{B_s(0)}\subset T_q M$.
\end{lemma}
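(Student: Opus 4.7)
The plan is to proceed by contradiction. Suppose $\exp_q$ fails to be defined at some point of $\overline{B_s(0)} \subset T_qM$. Then there is a unit vector $v \in T_qM$ and a maximal time $T \in (0, s]$ such that the unit-speed geodesic $\gamma(t) := \exp_q(tv)$ is defined on $[0,T)$ but cannot be extended to $[0,T]$. The goal will be to interpret this non-extendability as $\gamma$ escaping to an end of $M$ at finite length $\leq s$, producing a path witnessing $d(q, \mathcal{E}(M)) \leq s$, in contradiction with $s < d(q, \mathcal{E}(M))$.

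The first key step is the standard escape principle for geodesics on smooth Riemannian manifolds: if $\gamma$ cannot be extended to $T$, then for every compact $K \subset M$ there is a time $t_0 < T$ with $\gamma([t_0, T)) \cap K = \emptyset$. Otherwise, one could take $t_n \nearrow T$ with $\gamma(t_n)$ converging to some $p \in K$, and since the tangent vectors $\gamma'(t_n)$ lie in the compact set of unit vectors over a relatively compact neighborhood of $p$, one could pass to a subsequence with $\gamma'(t_n) \to w \in T_p M$; standard ODE theory for the geodesic equation would then give an extension of $\gamma$ across $T$, a contradiction.

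The next step, which I expect to be the main obstacle, is to convert this escape behavior into a bona fide end $e$ that $\gamma$ intersects in the sense defined earlier in the appendix. Choose any exhaustion $\{K_i\}$ of $M$ by compact sets with $K_i \subset \mathrm{int}(K_{i+1})$; such an exhaustion exists because $M$ is hemicompact. For each $i$ the above escape principle supplies a time $t_i < T$ with $\gamma([t_i, T)) \subset M \setminus K_i$. The image $\gamma([t_i, T))$ is connected and so lies in a single connected component $U_i$ of $M \setminus K_i$. For $j \geq i$ one has $\gamma([t_j, T)) \subset U_i \cap U_j$, so by connectedness $U_j \subset U_i$; thus $\{U_i\}$ is nested and defines an end $e \in \mathcal{E}(M)$. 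By construction $\gamma$ intersects $e$ (any representative $\{U_i'\}$ of $e$ agrees with $\{U_i\}$ past some index, so $\gamma(t) \in U_i'$ for $t$ sufficiently close to $T$).

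Finally, the path $\gamma\colon [0,T) \to M$ issues from $q$, intersects the end $e$, and has length equal to $T \leq s$, since it is unit-speed. Hence $d(q, e) \leq T \leq s$, and therefore $d(q, \mathcal{E}(M)) = \inf_{e' \in \mathcal{E}(M)} d(q, e') \leq s$, contradicting the hypothesis $s < d(q, \mathcal{E}(M))$. This completes the plan; the delicate point is verifying that the component choices $U_i$ are consistent across $i$ so that they genuinely describe an end, but this is secured by the connectedness of the tail $\gamma([t_j, T))$ together with the nesting of the $K_i$.
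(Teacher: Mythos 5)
Your proof is correct, but it takes a genuinely different route from the one in the paper, so let me compare. The paper first proves that the metric ball $\overline{B_s(q)}$ is complete: it takes a Cauchy sequence, passes to a limit in the Freudenthal compactification $\mathcal{F}(M)$, rules out the limit being an end by building a short connecting path, and then uses completeness of the ball to show an inextendable geodesic would have a limit point and hence extend. You instead apply the escape lemma directly to an inextendable unit-speed geodesic $\gamma$ on $[0,T)$, take an exhaustion $\{K_i\}$ (available since $M$ is hemicompact, as the appendix points out), and package the nested components $U_i \supset \gamma([t_i,T))$ into an end $e$ that $\gamma$ intersects at length $T\le s$, contradicting $s<d(q,\mathcal{E}(M))$. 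Both arguments ultimately contradict the distance hypothesis; yours avoids the Freudenthal compactification entirely and is arguably more elementary, at the cost of a careful check that the $U_i$ nest and really represent an end (your connectedness observation $\gamma([t_j,T))\subset U_i\cap U_j \Rightarrow U_j\subset U_i$ is the right move, and you should just note explicitly that you may take $t_i$ nondecreasing). The paper's route has the mild advantage of first establishing completeness of $\overline{B_s(q)}$, which is a reusable intermediate fact, though in the event the subsequent Lemma~\ref{l:compactification} only uses the conclusion about $\exp_q$. As written, your argument stands.
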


\begin{proof}
We will first show that the closed geodesic ball $\overline{B_s(q)}$ is complete. Let $\{p_i\}_{i=1}^\infty$ be a Cauchy sequence in $\overline{B_s(q)}$. 
Since $\mathcal{F}(M)$ is compact, there is a limit point $p_0\in\mathcal{F}(M)$ of $\{p_i\}_{i=1}^\infty$. If this limit is an end, $p_0\in \mathcal{E}(M)$, then we may represent $p_0$ by a sequence $\{U_i\}_{i=1}^\infty$ of connected open sets in $M$ satisfying $U_i\supset U_{i+1}$ for all $i\in\mathbb{N}$. Since each $U_i$ contains infinitely many points of $\{p_j\}_{j=1}^\infty$, by passing to a subsequence we may assume that $p_i\in U_i$ and $d(p_i,p_j)\le 2^{-i}$ for $i\le j$. We proceed by constructing a short curve from $\overline{B_s(q)}$ to the end $p_0$. For $i\in\mathbb{N}$, let $\gamma_i$ be a curve connecting $p_i$ to $p_{i+1}$ with length $|\gamma_i|\le 2d(p_i,p_{i+1})\le 2^{1-i}$. Denote by $\Gamma_i$ the concatenation $\cup_{k\ge i}\gamma_k$. Then $p_j\in \Gamma_i\cap U_j$ for any $j\ge i$, and $|\Gamma_i|\le 2^{2-i}$. Therefore $d(\overline{B_s(q)},\mathcal{E}(M))\le 2^{2-i}$ for any $i\in \N$, contradicting the property that $s<d(q,\mathcal{E}(M))$. Thus $p_0$ must lie in $M$, and in fact $p_0\in \overline{B_s(q)}$ due to closedness. It follows that $\overline{B_s(q)}$ is complete.

To show that $\exp_q$ is defined on all of $\overline{B_s(0)}\subset T_q M$, we proceed by contradiction and assume that there is a smallest radius $s_0\in(0,s)$ where this fails. Then there exists an inextendable geodesic $\gamma$ emanating from $q$ which is defined only for times $t\in[0,s_0)$. Take a sequence of times $t_i$ within this interval converging to $s_0$, and note that $\gamma(t_i)$ forms a Cauchy sequence in $\overline{B_s(q)}$. By completeness, $\gamma(t_i)$ converges to a point in $\overline{B_s(q)}$. Then by standard local existence arguments, this geodesic may be extended for a short time beyond $s_0$, which is a contradiction. 
\end{proof}

\begin{lemma}\label{l:compactification}
Let $(M,g)$ be an open hemicompact Riemannian manifold with ends $\mathcal{E}(M)$. Suppose $\mathcal{K}\subset M$ is a compact set. If $s<d(\mathcal{K},\mathcal{E}(M))$, then the distance neighborhood $V_s:=\{x\in M| \;d(x,\mathcal{K})\leq s\}$ is compact.
\end{lemma}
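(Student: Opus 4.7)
The plan is to leverage Lemma \ref{l:completeness} together with a localized Hopf--Rinow argument. To show $V_s$ is compact, I will extract a convergent subsequence from an arbitrary sequence $\{p_i\}_{i=1}^\infty\subset V_s$. For each $i$, pick $q_i\in \mathcal{K}$ with $d(p_i,q_i)\leq s$, and by compactness of $\mathcal{K}$ pass to a subsequence so that $q_i\to q_\infty\in \mathcal{K}$. The idea is then to trap the tail of $\{p_i\}$ inside a single compact closed ball centered at $q_\infty$.

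To construct this compact ball, fix $\varepsilon>0$ small enough that $s+\varepsilon<d(\mathcal{K},\mathcal{E}(M))$; this is possible by hypothesis. Then $d(q_\infty,\mathcal{E}(M))>s+\varepsilon$, so Lemma \ref{l:completeness} applies at $q_\infty$, and in particular its proof yields two facts: (i) $\exp_{q_\infty}$ is defined on the closed Euclidean ball $\overline{B_{s+\varepsilon}(0)}\subset T_{q_\infty}M$, and (ii) $\overline{B_{s+\varepsilon}(q_\infty)}$ is complete as a metric space. The image $\exp_{q_\infty}(\overline{B_{s+\varepsilon}(0)})$ is compact, being the continuous image of a compact set. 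The standard Hopf--Rinow argument now shows $\overline{B_{s+\varepsilon}(q_\infty)}\subset \exp_{q_\infty}(\overline{B_{s+\varepsilon}(0)})$: given $p$ with $d(p,q_\infty)\leq s+\varepsilon$, the usual construction produces a minimizing geodesic $\gamma$ from $q_\infty$ to $p$ of length $d(p,q_\infty)$, and this geodesic stays within $\overline{B_{s+\varepsilon}(q_\infty)}$ and is defined on $[0,d(p,q_\infty)]$ by (i); thus $p$ lies in the image. Hence $\overline{B_{s+\varepsilon}(q_\infty)}$ is compact.

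For $i$ sufficiently large, $d(q_i,q_\infty)<\varepsilon$, so $d(p_i,q_\infty)\leq d(p_i,q_i)+d(q_i,q_\infty)\leq s+\varepsilon$. The tail of $\{p_i\}$ thus lies in the compact set $\overline{B_{s+\varepsilon}(q_\infty)}$, and admits a convergent subsequence $p_i\to p_\infty$. To confirm $p_\infty\in V_s$, note that by continuity of distance
\begin{equation}
d(p_\infty,\mathcal{K})\;\leq\; d(p_\infty,q_\infty)\;=\;\lim_{i\to\infty}d(p_i,q_\infty)\;\leq\;\limsup_{i\to\infty}\bigl(d(p_i,q_i)+d(q_i,q_\infty)\bigr)\;\leq\;s,
\end{equation}
which gives $p_\infty\in V_s$, finishing the proof. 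The main technical obstacle is that we do not have global completeness of $(M,g)$, so the Hopf--Rinow-type existence of minimizing geodesics must be executed locally; this is precisely what Lemma \ref{l:completeness} was set up to supply via its completeness-of-balls conclusion and the fact that geodesics from $q_\infty$ exist up to the required radius.
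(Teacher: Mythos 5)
Your proposal is correct and follows essentially the same strategy as the paper: reduce to a single geodesic ball and invoke Lemma~\ref{l:completeness} to run a Hopf--Rinow type argument. Your reduction step is slightly different and arguably cleaner -- you extract a limit $q_\infty$ of nearby points in $\mathcal{K}$ rather than the paper's $\delta$-net covering $\mathcal{K}$ by finitely many balls -- and both routes are fine.

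The one place where you are compressing a genuine argument into a citation is the sentence claiming that ``the standard Hopf--Rinow argument now shows $\overline{B_{s+\varepsilon}(q_\infty)}\subset \exp_{q_\infty}(\overline{B_{s+\varepsilon}(0)})$.'' The classical Hopf--Rinow surjectivity statement is proved under global completeness, which you don't have, so invoking ``the usual construction'' here is asking for exactly the point that is nontrivial. What rescues the argument is that the construction, examined closely, never uses completeness of $(M,g)$ outside the set in question: it only needs (a) radial geodesics from $q_\infty$ to extend to parameter $s+\varepsilon$, which is your fact (i) from Lemma~\ref{l:completeness}, and (b) normal neighborhoods of uniformly positive radius around intermediate points of the geodesic, so that the connectivity step (``the set $\{t: d(\gamma(t),p)=d(q_\infty,p)-t\}$ is open'') runs. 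The second point requires some care because you cannot appeal to compactness of $\overline{B_{s+\varepsilon}(q_\infty)}$ to get uniform injectivity radius -- that is what you are in the middle of proving. The paper resolves this with a bootstrapping open-closed argument on the set $\mathcal{A}$ of radii $t$ for which $\exp_q$ already surjects onto $\overline{B_t(q)}$; at each stage it has a compact ball $\overline{B_t(q)}$ in hand and uses its compactness to extract a positive injectivity radius bound for the next step. Your proof as written should either unpack ``the usual construction'' into such a bootstrap, or else note explicitly that the connectivity step only requires a pointwise small normal neighborhood about each $\gamma(t_0)$ (which exists in any smooth Riemannian manifold) together with compactness of small geodesic spheres, and that (i) already guarantees $\gamma$ is defined for as long as needed. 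Either fix is routine, but as stated the proposal asserts the hardest step rather than proving it. (Incidentally, your fact (ii), metric completeness of the ball, is listed but never used; (i) alone carries the argument.)
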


\begin{proof} 
We first argue that it suffices to show the Lemma holds in the case where $\mathcal{K}$ is a single point.  Let $\delta>0$ such that $s+2\delta< d(\mathcal{K},\mathcal{E}(M))$. 
Take a $\delta$-net $\{q_1,\cdots,q_m\}$ of $\mathcal{K}$, and note $V_s\subset \cup_{i=1}^m B_{s+2\delta}(q_i)$, where $B_{s+2\delta}(q_i)$ denotes the open geodesic ball around $q_i$ of radius $s+2\delta$. 
Next, observe that $V_s$ is closed in $M$ since it is the preimage of a closed set under the function $x\mapsto d(x,\mathcal{K})$, which is continuous by the triangle inequality.
Therefore, if the closed set $\cup_{i=1}^m\overline{B_{s+2\delta}(q_i)}$ is compact, then $V_s$ is a closed subset of a compact set which implies that $V_s$ is compact. 
Hence, we need only show that the closure $\overline{B_s(q)}$ is compact for any point $q\in M$ and number $s$ satisfying $s<d(q,\mathcal{E}(M))$. 

Given such a $q$ and $s$, the exponential map $\exp_q$ is defined on $\overline{B_t(0)}\subset T_q M$ by Lemma \ref{l:completeness} and we may consider the set
\begin{equation}
    \mathcal{A}=\{t\in[0,s]|\; \exp_q: \overline{B_t(0)}\subset T_q M \to \overline{B_{t}(q)} \subset M \textnormal{ is surjective}\}.
\end{equation}
We claim that if $t_0\in\mathcal{A}$, then the ball $\overline{B_{t_0}(q)}$ is compact. Indeed, notice that the exponential map $\mathrm{exp}_q:\overline{B_{t_0}(0)}\subset T_q M \to M$ is continuous, a fact which follows from the well-posedness of the initial value problem for geodesics in addition to the positive distance imposed between $\overline{B_{t_0}}$ and $\mathcal{E}(M)$. 
It follows that $\overline{B_{t_0}(q)}$ is the continuous image of a compact set and is therefore compact.

The proof will be complete upon establishing that $\mathcal{A}$ is the entire interval $[0,s]$. Clearly, $0\in \mathcal A$, and we proceed by showing the component of $\mathcal{A}$ containing $0$ is both open and closed, starting with the latter. 
Suppose $[0,t)\subset \mathcal{A}$ and let $p\in M$ such that $d(p,q)=t$. 
Then for any $i\in \N$,  there exists a curve $\gamma_i$ connecting $p$ and $q$ with $|\gamma_i|\le t+\frac{1}{i}$. 
Let $\varepsilon>0$ small enough so that $B_\varepsilon(p)$ is a normal neighborhood of $p$. 
Let $p_i\in \gamma_i\cap \partial B_\varepsilon(p)$, then $d(p_i,q)\le t+\frac{1}{i}-\varepsilon$. Since $\partial B_\varepsilon(p)$ is compact, there is a limit point $p_0\in\partial B_\varepsilon(p)$ of $\{p_i\}_{i=1}^\infty$. 
Since $d(p_0,q)\le t-\varepsilon\in\mathcal{A}$, there exists a minimizing geodesic $\alpha$ connecting $p_0$ and $q$. 
Let $\beta$ be the minimizing geodesic connecting $p_0$ and $p$. 
Denote by $\gamma$ the concatenation of $\alpha$ and $\beta$ which is a piecewise smooth geodesic between $p$ and $q$. 
Because $|\gamma|=|\alpha|+|\beta|\le t-\varepsilon+\varepsilon=t$ and $d(p,q)=t$, $\gamma$ is in fact smooth.
Since $p$ was arbitrary, $t\in \mathcal{A}$.

We next show that the component of $\mathcal A$ containing $0$ is open. Suppose that $[0,t]\subset \mathcal A$. Since $\overline{B_t(q)}$ is compact, the injectivity radius $\varepsilon:=\inf_{x\in \overline{B_t(q)}}\mathrm{inj}_x$ is positive.
Our goal is to show that $[0,t+\varepsilon)\in\mathcal A$.
Let $p'\in M$ be such that $t<d(q,p')<t+\varepsilon$.
For $i\in\mathbb{N}$, let $\gamma_i$  be a unit speed curve from $q$ to $p'$ with length $|\gamma_i|<d(q,p') +\frac1i$. 
Consider the points $p_i'=\gamma_i(t)$ for $i\in\mathbb{N}$.
Since $\overline{B_{t}(q)}$ is compact by the assumption $t\in\mathcal A$, we may pass to a subsequence to obtain a limit $p_i'\to p_0'\in \overline{B_{t}(q)}$. Note that $d(p_i',p')\leq d(q,p')-t+1/i$ and so $d(p'_0,p')\leq d(q,p')-t<\varepsilon$.
Since $t\in \mathcal A$, we can find a geodesic $\alpha $ connecting $q$ and $p_0'$ with length $|\alpha|\le t$.
By the definition of $\varepsilon$, we may find a geodesic $\beta$ connecting $p_0'$ to $p'$ with length $|\beta|=d(p_0',p')\le d(q,p')-t$.
Concatenating $\alpha$ and $\beta$ we obtain a piecewise smooth geodesic $\gamma$ connecting $p'$ and $q$, with $|\gamma|\le d(q,p')$. It follows that $\gamma$ must in fact be smooth everywhere.
Hence, the connected component of $\mathcal A$ containing $0$ is open. This component must then agree with $[0,s]$, yielding the desired result.
\end{proof}


\end{document}